\numberwithin{equation}{section}
\theoremstyle{plain}
\newtheorem{theorem}{Theorem}[section]
\newtheorem{lemma}[theorem]{Lemma}
\newtheorem{corollary}[theorem]{Corollary}
\newtheorem{proposition}[theorem]{Proposition}
\newtheorem{definition}[theorem]{Definition}
\newtheorem{remark}[theorem]{Remark}
\newcommand{\R}{\mathbb{R}}
\newcommand{\N}{\mathbb{N}}
\newcommand{\A}{\mathcal{A}}
\def\XXint#1#2#3{{\setbox0=\hbox{$#1{#2#3}{\int}$ }
\vcenter{\hbox{$#2#3$ }}\kern-.6\wd0}}
\title[$W^{2,\varepsilon}$-estimates, equations with singular-degenerate coefficients]{On $W^{2,\varepsilon}$-estimates for a class of singular-degenerate parabolic equations}
\author[J. Fang]{Junyuan Fang}
\address[J. Fang]{Department of Mathematics, University of Tennessee, 227 Ayres Hall,
1403 Circle Drive, Knoxville, TN 37996-1320 }
\email{jfang9@vols.utk.edu}
\author[T. Phan]{Tuoc Phan}
\address[T. Phan]{Department of Mathematics, University of Tennessee, 227 Ayres Hall,
1403 Circle Drive, Knoxville, TN 37996-1320}
\email{tphan2@utk.edu}
\subjclass[2020]{35B45, 35B65, 35K65, 35K67, 35K10} 
\keywords{Parabolic weighted $W^{2,\varepsilon}$-Lin type estimates, Evans type estimates, Weak Harnack inequalities, Parabolic equations in non-divergence form, Singular-degenerate coefficients, Muckenhoupt class of weights, bounded mean oscillations with weights.}
\begin{document}
\begin{abstract} We study a class of parabolic equations in non-divergence form with measurable coefficients that exhibit singular and/or degenerate behavior governed by weights in the $A_{1+\frac{1}{n}}$-Muckenhoupt class.  Under a smallness assumption on a weighted mean oscillation of the weights, we establish weighted $W^{2,\varepsilon}$-estimates in the spirit of F.-H. Lin. Our results particularly holds for equations whose leading coefficients are of logistic-type singularities, as well as to those with polynomial blow-up or vanishing with sufficiently small exponents. A central component of our approach is the development of local quantitative lower estimates for solutions, which are interpreted as the mean sojourn time of sample paths, a stochastic-geometric perspective that generalizes the seminal work of L. C. Evans. We address the singular-degenerate nature of the operators by employing a class of intrinsic weighted parabolic cylinders, combined with a perturbation argument and parabolic Aleksandrov-Bakelman-Pucci (ABP) estimates. Furthermore, we conduct a rigorous analysis of weight regularization and truncation to ensure that the estimates are independent of the regularization and truncation parameters.  The results extend classical regularity theory to a broad class of second-order parabolic equations and provide a functional analytic foundation for further study of fully nonlinear parabolic equations with singular-degenerate structure. \end{abstract}
\maketitle
\section{Introduction and main results} This paper establishes a weighted parabolic $W^{2,\varepsilon}$-Lin type estimate for solutions to a class of second order linear parabolic equations in non-divergence form whose leading coefficients are merely measurable and do not satisfy the uniformly elliptic condition nor the boundedness condition.  The obtained results naturally extend the classical result proved by F.-H. Lin in \cite{Lin}, and the analysis of the mean sojourn time of sample paths within sets by L. C.~Evans in \cite{Evans}, originally developed for elliptic equations with measurable coefficients satisfying uniform ellipticity and boundedness conditions.  Parabolic analogues under the uniform ellipticity and boundedness condition were later established by N.V. Krylov in \cite{Krylov-2010, Krylov-2012}.  This paper generalizes, for the first time, these results to a broader class of singular-degenerate parabolic equations, highlighting new regularity phenomena in singular and non-uniformly elliptic contexts. Our findings apply, for examples, to second-order parabolic equations whose leading coefficients exhibit logistic-type singularities, as well as to those featuring polynomial blow-up or vanishing with sufficiently small exponents. 

To put our study in perspective, let us point out that equations with singular-degenerate coefficients naturally arise in models where transport or diffusion depends strongly on the state variable, and on heterogeneous structures. In biofilms, diffusion may vanish at low density and blow up near maximal packing, producing degenerate-singular parabolic equations with nonstandard entropy structure and regularity properties, see \cite{DMZ, HM}. In compressible fluid mechanics and astrophysical flows;  for example, compressible Euler equations with physical vacuum and compressible Navier-Stokes equations with density dependent viscosity;  the governing partial differential equations feature singular or degenerate coefficients \cite{DIT, YJ, YZ}. Similarly, semiconductor drift-diffusion and energy-transport models give rise to degenerate or singular coefficients from kinetic principles \cite{MRS}. In addition, equations with singular-degenerate coefficients also arise frequently in other areas of mathematics such as probability, geometric analysis, mathematical finance, free boundary problems, and mathematical biology as discussed and studied in \cite{Dong-Phan, Cho-Fang-Phan, IS, Ji, Kim-Lee, Le, Mooney, Pop} and the references therein. Note also that the numerical analysis of singular-degenerate parabolic PDEs in porous media has been rigorously developed, including stochastic extensions, as in \cite{BGV}, for example. These phenomena motivate rigorous analysis of propagation, regularity, and coercivity.

\smallskip
To set up, let us denote the following class of linear parabolic operators in non-divergence form
\begin{equation}\label{parabolic operator}
\mathcal{L}u = u_t - \omega(x) a_{ij}(x,t) D_{ij} u.
\end{equation}
Here in \eqref{parabolic operator}, the Einstein summation convention is used. The notation $D_{ij}$ denotes the second order partial derivatives with respect to the spatial variables $x_i$ and $x_j$ for $i, j \in \{1, 2, . . . , n\}$ with $n \in \mathbb{N}$, and $u_t$ denotes the partial derivative of $u$ in the time $t$-variable. Moreover, $a_{ij}: \mathbb{R}^n \times \mathbb{R} \rightarrow \mathbb{R}$ is assumed to be measurable, and $\omega: \mathbb{R}^n \rightarrow \mathbb{R}$ is a non-negative measurable function which can be zero or unbounded at certain points in the considered domains.

\smallskip
We assume that the matrix $(a_{ij}(x,t))$ is symmetric, measurable in $(x,t) \in \mathbb{R}^n \times \mathbb{R}$, and it satisfies the following uniformly elliptic and boundedness conditions: there exists a constant $\nu \in (0,1)$ such that
\begin{equation} \label{elliptic}
\nu |\xi|^2 \leq a_{ij}(x,t) \xi_i \xi_j, \quad |a_{ij}(x,t)| \leq \nu^{-1}, \quad \forall \ \xi = (\xi_1, \xi_2,\ldots, \xi_n) \in \mathbb{R}^n,
\end{equation}
for every $(x,t) \in \mathbb{R}^n \times \mathbb{R}$. We also assume that  $\omega$ is  in the $A_{1+\frac{1}{n}}$ Muckenhoupt class, i.e.,
\begin{equation}\label{w-norm}
[\omega]_{A_{1+\frac{1}{n}}}=\sup_{B_r(x_0)\subset \mathbb{R}^n} 
\left(\fint_{B_r(x_0)}\omega(x)\, dx\right)\left(\fint_{B_r(x_0)}\omega(x)^{-n}\, dx\right)^{\frac{1}{n}} <\infty.
\end{equation}
Throughout the paper, $B_r(x_0)$ denotes the ball in $\R^n$ of radius $r>0$ centered at $x_0 \in \R^n$. As $\omega \in A_{1+\frac{1}{n}}$, it can vanish or blow up at some points inside the considered domain. Consequently, the leading coefficients $\omega(x) a_{ij}(x,t)$ in the parabolic operator $\mathcal{L}$ in \eqref{parabolic operator} can be degenerate or singular, or both singular and degenerate. 

\smallskip
To state our main result, let us introduce some notation.  For the given $\omega\in A_{1+\frac{1}{n}}$ in \eqref{parabolic operator}, for any point $Y=(y,s)\in \R^{n}\times\R$ and $r>0$, we define the following weighted parabolic cylinders centered at $Y$ with radius $r$ by
\begin{align*}
C_{r,\omega}(Y)&=B_r(y)\times \left(s-r^{2}\left(\omega^{-n}\right)_{B_r(y)}^{1/n},\, s\right), \ \text{and} \\[4pt]
Q_{r,\omega}(Y)&=B_r(y)\times \left(s-r^{2}\left(\omega^{-n}\right)_{B_r(y)}^{1/n},\, s+r^{2}\left(\omega^{-n}\right)_{B_r(y)}^{1/n}\right),
\end{align*}
where $(\omega^{-n})_{B_r(y)}$ denotes the average value of the function $\omega(x)^{-n}$ over the ball $B_r(y)$, that is,
\[
\left(\omega^{-n}\right)_{B_r(y)}=\frac{1}{|B_r(y)|}\int_{B_r(y)}\omega(x)^{-n}\, dx.
\]
Note that since $\omega \in A_{1+\frac{1}{n}}$, the function $\omega(x)^{-n}, x \in \mathbb{R}^n$ is locally integrable, and thus $(\omega^{-n})_{B_r(y)}$ is well-defined. Throughout the paper, for abbreviation, we denote
\[
B_r=B_r(0), \quad C_{r,\omega}=C_{r,\omega}(0,0), \quad \text{and}\quad Q_{r,\omega}=Q_{r,\omega}(0,0).
\]
In addition to the assumption that $\omega \in A_{1+\frac{1}{n}}$, we also require that its mean oscillation with respect to the weight $\omega$ is sufficiently small.  Specifically, for a nonempty open set $\Omega \subset \R^n$, let us denote the mean oscillation of $\omega$ with respect to $\omega$ in $\Omega$ by
\[
[[\omega]]_{\textup{BMO}(\Omega,\omega)} = \sup_{B_r(y) \subset \Omega} \left(\frac{1}{\omega(B_r(y))}\int_{B_r(y)}|\omega(x)-(\omega)_{B_r(y)}|\,dx \right)
\]
in which
\[
\omega(B_r(y)) = \int_{B_r(y)} \omega(x)\, dx.
\]
See Section \ref{bmo-sub-sec} for more details, and basic analysis results on the class of functions of bounded mean oscillations with a given weight.

\smallskip
For a nonempty open set $U \subset \mathbb{R}^{n+1}$, and a non-negative locally integrable function $\mu : \mathbb{R}^n \rightarrow (0, \infty)$, a measurable function $f: U \rightarrow \mathbb{R}$ is said to be in a weighted Lebesgue space $L^p(U, \mu)$ with some $p \in [1, \infty)$ if  
\begin{equation} \label{weight-Lp-space}
\|f\|_{L^{p}(U,\, \mu)}=\left(\int_{U} |f(x,t)|^p \mu(x)\, dxdt \right)^{1/p} <\infty.
\end{equation}
A locally integrable function $u: U \rightarrow \mathbb{R}$ is said to be in $\mathcal{W}^{2,1}_{n+1}(U, \omega)$ if
\[
D_{ij} u \in L^{n+1}(U,\,\omega), \quad \forall \ i,j \in \{1,2, \ldots, n\}\quad \text{and}\quad
u_t\in L^{n+1}(U,\, \omega^{-n}).
\] 
In addition, we say that $u \in \mathcal{W}^{2,1}_{n+1, \textup{loc}}(U, \omega)$ if $u \in \mathcal{W}^{2,1}_{n+1}(V, \omega)$ for any open bounded set $V \subset U$ such that $\overline{V} \subset U$. See Definition \ref{solution space} for more details on the class of functional space $\mathcal{W}^{2,1}_{n+1}(U, \omega)$. Also, in this paper, $\partial' U$ denotes the parabolic boundary of $U$.  

\smallskip
Now, we state the main result of the paper.
\begin{theorem}[Weighted $W^{2,\varepsilon}$-Lin type estimates]\label{Lin thm} For every $\nu \in (0,1)$ and $K_0\in [1,\infty)$, there exist sufficiently small positive constants $\delta=\delta(n,\nu, K_0)$ and $p_0=p_0(n,\nu, K_0)$ such that the following assertion holds. Suppose that \eqref{elliptic} holds, and that the weight $\omega \in A_{1+\frac{1}{n}}$ satisfies
\begin{equation} \label{omega-smallness-12-25}
[\omega]_{A_{1+\frac{1}{n}}} \leq K_0 \quad \text{and}\quad [[\omega]]_{\textup{BMO}(B_{3},\, \omega)}\leq \delta.
\end{equation}
Then, for every $u\in \mathcal{W}^{2,1}_{n+1, \textup{loc}}(C_{1,\,\omega},\, \omega) \cap C(\overline{C}_{1,\,\omega})$ and every $p\in(0,p_0]$, we have
\begin{equation}\label{Lin esti}
\|D^2u\|_{L^{p}(C_{1,\omega},\, \omega)}\leq N \Big( \sup_{\partial' C_{1,\omega}}|u| + \|\mathcal{L} u\|_{L^{n+1}(C_{1,\omega},\, \omega^{-n})}\Big),
\end{equation}
where $N=N(n,\nu, K_0, p)>0$.
\end{theorem}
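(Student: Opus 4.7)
\medskip
\noindent\textbf{Proof proposal.}
The plan is to follow the Evans--Krylov--Lin template, but to recast every geometric object in terms of the intrinsic weighted parabolic cylinders $C_{r,\omega}(Y)$, and to use the weighted BMO-smallness of $\omega$ to play the role that the classical ``freezing of the leading coefficients'' plays in the uniformly elliptic theory. I would organize the argument in three main stages, preceded by a standard reduction based on regularization of the weight.

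\emph{Stage 0: Regularization and truncation.} Since both sides of \eqref{Lin esti} should be insensitive to smoothing of $\omega$ and to truncation away from $0$ and $\infty$, I would first replace $\omega$ by $\omega_k = \max(1/k,\min(k,\omega))$ and mollify. A careful weight analysis shows that one can arrange $[\omega_k]_{A_{1+\frac{1}{n}}}$ and $[[\omega_k]]_{\textup{BMO}(B_3,\omega_k)}$ to stay bounded by quantities comparable to $K_0$ and $\delta$. For each fixed $k$, $\mathcal{L}_k u=u_t-\omega_k a_{ij}D_{ij}u$ is uniformly parabolic in the classical sense, so interior $W^{2,1}_{n+1}$ regularity is available. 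Proving the $W^{2,\varepsilon}$ bound with constants independent of $k$, and then letting $k\to\infty$, reduces the theorem to the case of a smooth $\omega$ bounded between positive constants.

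\emph{Stage 1: Intrinsic ABP and Evans-type lower bound.} On $C_{r,\omega}(Y)$ the time-length equals $r^2(\omega^{-n})_{B_r(y)}^{1/n}$, which is precisely the scaling that makes the parabolic ABP of Krylov produce a constant depending only on $n$, $\nu$, $K_0$ and on $\|\mathcal{L}u\|_{L^{n+1}(C_{r,\omega},\omega^{-n})}$. Using this weighted ABP together with a barrier built on the rescaled polynomial $\tfrac{|x-y|^2}{r^2}-\tfrac{(t-s)}{r^2(\omega^{-n})_{B_r(y)}^{1/n}}$, I would prove the following quantitative lower estimate in the spirit of Evans: there exist constants $\sigma=\sigma(n,\nu,K_0)\in(0,1)$ and $\mu=\mu(n,\nu,K_0)\in(0,1)$ such that for every nonnegative supersolution $u$ of $\mathcal{L}u\leq 0$ on $C_{r,\omega}(Y)$ with $u\geq 1$ on a designated sub-cylinder,
\[
\omega\bigl(\{u\geq\sigma\}\cap C_{r,\omega}(Y)\bigr)\geq \mu\,\omega\bigl(C_{r,\omega}(Y)\bigr).
\]
The perturbation step, i.e.\ replacing $\omega$ by its average $(\omega)_{B_r}$ inside the operator, is controlled exactly by the hypothesis $[[\omega]]_{\textup{BMO}(B_3,\omega)}\leq\delta$ of \eqref{omega-smallness-12-25}: by choosing $\delta$ sufficiently small the error in the ABP bound can be absorbed.

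\emph{Stage 2: $W^{2,\varepsilon}$ via power-decay of distribution functions.} Following Lin, for each $\lambda>0$ I would introduce the set $G_\lambda$ of points at which $u$ can be touched from above and below by paraboloids of opening $\lambda$ on the appropriate intrinsic scale. Combining a Vitali covering lemma for the family $\{C_{r,\omega}\}$ (available because $\omega$ is a doubling measure) with the lower bound from Stage 1 and the weighted ABP applied to ``bad'' cylinders, I would derive a decay estimate of the form
\[
\omega\bigl(\{M(D^2u)>K\lambda\}\cap C_{1,\omega}\bigr)\leq (1-\mu)\,\omega\bigl(\{M(D^2u)>\lambda\}\cap C_{1,\omega}\bigr)+\omega^{-n}\bigl(\{|\mathcal{L}u|>c\lambda\}\bigr),
\]
where $M$ denotes a suitable weighted maximal function and $K,c$ depend on $(n,\nu,K_0)$. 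Iterating this inequality in $\lambda$ yields a weak-$L^{p_0}$ bound for $D^2u$ with respect to the measure $\omega\,dx\,dt$, for some $p_0=p_0(n,\nu,K_0)>0$. The estimate \eqref{Lin esti} for all $p\in(0,p_0]$ then follows by layer-cake integration.

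\emph{Main obstacle.} The hardest step is Stage 1: producing the Evans-type lower bound with constants independent of the regularization parameter $k$ and depending only on $n,\nu,K_0$. The cylinders $C_{r,\omega}$ are engineered to cancel the anisotropy of the leading coefficient, but the perturbation argument requires that $|\omega-(\omega)_{B_r}|$ be controllably small in a weighted average sense on every scale, which is exactly the condition $[[\omega]]_{\textup{BMO}(B_3,\omega)}\leq \delta$. Tracking that the barrier, the ABP, and the iteration all produce constants uniform in $k$ — so that the final limit $k\to\infty$ of Stage 0 is legitimate — is the principal technical difficulty of the paper.
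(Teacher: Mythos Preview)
Your Stage~0 correctly anticipates the paper's framework: Propositions~2.3, 2.4, 2.8, and~2.9 are devoted precisely to showing that $[\omega]_{A_{1+1/n}}$ and $[[\omega]]_{\textup{BMO}}$ are stable under truncation and mollification, and the proof of Theorem~\ref{Lin thm} removes the regularity hypotheses in four steps along the lines you sketch.

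Stages~1 and~2, however, diverge from the paper, and Stage~1 as written is not the estimate that drives either route. The statement you give---that $\{u\ge\sigma\}$ has positive density when $u$ is a nonnegative supersolution with $u\ge 1$ on a sub-cylinder---is a Krylov--Safonov spreading-of-positivity lemma for $u$. The paper's Theorem~\ref{M-thm} is instead a statement about the \emph{source term}: if $u\ge 0$ and $\mathcal{L}u\ge 0$ on $Q_{r,\omega}$, then
\[
\inf_{B_{r/2}}u(\cdot,\bar t)\;\ge\; N\,q^{\gamma_0}r^2,\qquad q=\frac{\omega\bigl(\{\mathcal{L}u\ge\omega\}\cap C_{r,\omega}\bigr)}{\omega(C_{r,\omega})}.
\]
Rearranged, this is a weak-$L^{1/\gamma_0}$ bound on $\mathcal{L}u/\omega$ in terms of the pointwise value $u(0,\bar t)$ (Lemma~\ref{posti lemma}). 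Your version controls the wrong quantity and would not feed into your own Stage~2 iteration without substantial reformulation.

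The paper's Stage~2 is also not the Caffarelli--Wang paraboloid-touching/maximal-function approach you propose. It uses Lin's original algebraic device (Theorem~\ref{Lin thm-2}): at each point choose an auxiliary matrix $(a^0_{ij})$ with ellipticity $\nu/2$, built from the eigendecomposition of $D^2u$, so that pointwise
\[
(\mathcal{L}_0-\mathcal{L})u \;\ge\; \tfrac{\nu}{2}\,\omega\,|D^2u|.
\]
Writing $\mathcal{L}_0 u = g+f$ with $g=(\mathcal{L}_0-\mathcal{L})u\ge 0$ and $f=\mathcal{L}u$, the weak Harnack inequality from Theorem~\ref{M-thm} (applied to $\mathcal{L}_0$) bounds $\|g/\omega\|_{L^p(\omega)}$ by $\sup_{\partial'Q_{r,\omega}}|u|+\|f\|_{L^{n+1}(\omega^{-n})}$, and since $g/\omega\ge\tfrac{\nu}{2}|D^2u|$ this is~\eqref{Lin esti}. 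There are no touching paraboloids, no weighted maximal function, no distribution-function iteration for $D^2u$; all the covering and iteration work is concentrated inside the proof of Theorem~\ref{M-thm} itself. Your Caffarelli--Wang route might be adaptable to the weighted setting, but it would require redefining paraboloid openings to match the intrinsic scaling $r^2(\omega^{-n})_{B_r}^{1/n}$ and proving a weighted Calder\'on--Zygmund decomposition---none of which the paper needs once Lin's pointwise trick is available.
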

\smallskip \noindent
The following remark gives a few examples of the weights $\omega$ satisfying \eqref{omega-smallness-12-25}. For completeness, the proof of the remark is provided in Appendix \ref{proof-weigh-example}.
\begin{remark} \label{remark-example} We provide some simple examples of the class of weights for which Theorem \ref{Lin thm} is applicable.
\begin{itemize}
\item[\textup{(a)}] Let $\omega(x) = |x|^\alpha$ with $x \in \mathbb{R}^n$. It follows from \cite{CMP, Cho-Fang-Phan}   that $\omega \in A_{1+\frac{1}{n}}$ when $\alpha \in (-n,1)$. In addition, there exist $K_0 = K_0(n)\geq 1$ and $N = N(n) >0$ such that
\[
[\omega]_{A_{1+\frac{1}{n}}} \leq K_0, \quad \text{and} \quad [[\omega]]_{\textup{BMO}(B_3,\, \omega)} \leq N|\alpha|, \quad \forall \ |\alpha| \leq 1/2.
\]
Therefore, Theorem \ref{Lin thm} applies if $|\alpha|$ is sufficiently small. 

\item[\textup{(b)}]  Let $\varphi: \mathbb{R}^n\setminus \{0\} \rightarrow \mathbb{R}$ be defined by
\[
\varphi(x) = \left\{
\begin{array}{ll}
-\ln|x| & \quad \textup{if} \quad |x| \leq e^{-1},\\
1& \quad \textup{otherwise}.
\end{array} \right.
\]
It is known that $\varphi \in A_1 \subset A_{1+\frac{1}{n}}$ with $[\varphi]_{A_1} = K_0(n)$. Moreover,  there exists a constant $N = N(n)>0$ such that
\begin{equation} \label{BMO-varphi-example}
[[\varphi]]_{\textup{BMO}(B_{r_0}, \varphi)} \leq \frac{N}{|\ln(4r_0)|} \quad \text{for} \quad  r_0 \in (0, \frac{1}{10e}).
\end{equation}
Hence, $[[\varphi]]_{\textup{BMO}(B_{r_0}, \varphi)} $ is sufficiently small if $r_0$ is sufficiently small. From this and a localization argument, we can apply Theorem \ref{Lin thm} for $\omega = \varphi$.

\item[\textup{(c)}] Given $m$ distinct points $\{x_k\}_{k=1}^m$ in $B_1$, let
\begin{align*}
\omega_1(x)  =  \prod_{k=1}^{m}  \varphi(x-x_k), \quad x \in \mathbb{R}^n,
\end{align*}
where $\varphi$ is defined in \textup{(b)}.  Also, let
\[
\omega_2(x) = \sum_{k=1}^{m} c_k |x-x_k|^{\alpha_k}, \quad \text{and} \quad \omega_3(x) = \prod_{k=1}^{m} |x-x_k|^{\alpha_k} \quad \text{for}\quad x \in \mathbb{R}^n,
\]
with some  $c_k >0$ and $|\alpha_k|$ sufficiently small for $k \in \{1, 2,\ldots, m\}$.  By using localization, dilation, translation, and a covering argument, we can apply Theorem \ref{Lin thm} to derive the estimate \eqref{Lin esti}  for $\omega \in \{\omega_1, \omega_2, \omega_3\}$.
\end{itemize}
\end{remark}
\smallskip
We note that the estimate \eqref{Lin esti} with $\omega \equiv1$ was originally proved by F.-H. Lin in \cite{Lin} for solutions to elliptic equations with uniformly elliptic and bounded measurable coefficients. The stated form of Theorem \ref{Lin thm} with estimate as \eqref{Lin esti} when $\omega \equiv 1$ first appeared in \cite{Dong-Krylov-Li} without a proof, and in \cite{Krylov-2010, Krylov-2012} with complete proofs. See also \cite{Dong-Kitano} for a recent development on this topic. Theorem \ref{Lin thm} provides a generalization of the mentioned results in \cite{Dong-Krylov-Li, Krylov-2010, Krylov-2012} to the class of parabolic equations in which the coefficients do not satisfy the uniformly elliptic nor bounded conditions. As discussed in \cite[Section 9.4, p.~ 180]{Krylov-book}, Theorem \ref{Lin thm} is expected to be one of the cornerstones for $L^p$-theory of fully nonlinear parabolic equations with singular-degenerate coefficients.  Speaking of this, we note that a statement similar to Theorem \ref{Lin thm}  for viscosity solutions to fully nonlinear elliptic equations can be found in \cite{Caff}, and for fully nonlinear parabolic equations can be found in \cite{Wang-1, Wang}. It is important to note that the classes of equations in \cite{Caff, Wang-1, Wang} are assumed to satisfy the uniformly elliptic boundedness conditions, and a vanishing mean oscillation condition. Moreover, the estimates in \cite{Caff, Wang-1, Wang} are slightly different from \eqref{Lin esti} as the domains on their right hand sides are wider than those on the left hand sides; see also \cite{FGS} for similar results. 

\smallskip
To prove Theorem \ref{Lin thm}, we establish the following result on quantitative lower estimates of solutions to the class of parabolic equations with the operator $\mathcal{L}$. This type of estimate was proved by L. C.~Evans in \cite[Theorem 1]{Evans} for elliptic equations with uniformly elliptic and bounded smooth coefficients (i.e., $\omega \equiv 1$), and it is known as the mean sojourn time of sample paths within the set $\{(x,t): \mathcal{L} u(x,t) \geq \omega(x)\}$. Note that in Theorem \ref{M-thm} below and throughout the paper, we denote
\begin{align*}
\omega(E)&=\int_{E}\omega(x)\, dx\ \qquad \text{for every measurable set}\  E \subset \R^n, \  \text{and} \\
\omega(U)&=\int_{U}\omega(x)\, dxdt\ \quad \text{for every measurable set}\ U\subset \R^{n+1}.
\end{align*}

\begin{theorem} \label{M-thm} For every $\nu \in (0,1)$ and $K_0\in [1,\infty)$, there exist constants $\delta_0=\delta_0(n,\nu, K_0)\in (0,1)$ sufficiently small, $\gamma_0=\gamma_0(n,\nu,K_0)>1$, and $N=N(n,\nu, K_0)>0$ such that the following assertion holds. Suppose that \eqref{elliptic} holds, and that the weight $\omega$ is smooth and satisfies
\begin{equation} \label{omega-cond-12-15}
[\omega]_{A_{1+\frac{1}{n}}} \leq K_0, \quad \frac{1}{k} \leq \omega \leq k, \quad \text{and }\quad [[\omega]]_{\textup{BMO}(B_{2r},\, \omega)}\leq \delta_0
\end{equation}
for some $k \in \mathbb{N}$ and $r>0$. Then for every non-negative $u\in \mathcal{W}^{2,1}_{n+1}(Q_{r,\omega}, \omega) \cap C(\overline{Q}_{r,\,\omega})$  satisfying 
\[ 
\mathcal{L}u \geq 0 \quad  \text{in} \quad  Q_{r,\,\omega},
\]
it holds that
\begin{equation}\label{M-result}
\inf_{B_{r/2}}u(\cdot, \bar{t})\geq Nq^{\gamma_0}r^2,  \quad \textup{for} \quad \bar{t} =r^2(\omega^{-n})_{B_r}^{1/n}, 
\end{equation}
where
\[
q=\frac{\omega\big(\{\mathcal{L} u\geq \omega\}\cap C_{r,\,\omega}\big)}{\omega(C_{r,\,\omega})}\in [0,1].
\]
\end{theorem}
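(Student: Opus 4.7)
The plan is to follow the Evans--Krylov strategy, combining a carefully chosen barrier with the weighted parabolic Aleksandrov-Bakelman-Pucci (ABP) estimate, adapted to the intrinsic weighted cylinders $Q_{r,\omega}$ and the small-BMO control of $\omega$. Using the scaling built into the intrinsic cylinders, I would first rescale to the normalized setting $r = 1$ and $(\omega^{-n})_{B_1}^{1/n} = 1$, so that $\bar t = 1$ and $C_{1,\omega} = B_1 \times (-1, 0]$. The hypothesis $1/k \leq \omega \leq k$ and the smoothness of $\omega$ serve only to guarantee that a classical solution exists and every quantity is finite; all quantitative bounds must depend solely on $n$, $\nu$, and $K_0$.

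Next I would reduce to a problem with a non-negative linear forcing. Set $A = \{\mathcal{L}u \geq \omega\}\cap C_{1,\omega}$, so that $\mathcal{L}u \geq \omega\,\mathbf{1}_{A}$ on $Q_{1,\omega}$, and let $v \in \mathcal{W}^{2,1}_{n+1}(Q_{1,\omega},\omega)$ solve $\mathcal{L}v = \omega\,\mathbf{1}_{A}$ with $v = 0$ on $\partial' Q_{1,\omega}$. By the weighted parabolic maximum principle applied to $u - v$, we obtain $u \geq v \geq 0$ throughout $Q_{1,\omega}$, reducing matters to a lower bound for $v(\cdot, 1)$ on $B_{1/2}$. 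To obtain such a bound I would construct a smooth barrier $\psi$ on $B_1 \times [0, 1]$ satisfying $\psi \leq 0$ on its parabolic boundary, $\psi \geq 1$ on $B_{1/2}\times\{1\}$, and $\bar{\mathcal{L}}\psi \leq C\bar\omega$ for the frozen operator $\bar{\mathcal{L}}w = w_t - (\omega)_{B_1} a_{ij} D_{ij}w$, with all constants depending only on $n,\nu,K_0$; a radial-exponential ansatz such as $\psi(x,t) = c_1(e^{-\mu|x|^2} - e^{-\mu})(1 - e^{-\lambda t})$ with $\mu, \lambda$ tuned to $\nu$ suffices. Comparing $v$ to $\psi$ through the weighted ABP estimate and using the small-BMO assumption to absorb the discrepancy $(\omega - (\omega)_{B_1})a_{ij}D_{ij}\psi$ into a controlled error term yields a one-scale dichotomy: either $v(\cdot,1) \geq c_0$ on $B_{1/2}$, or $\omega(A) \leq (1 - \sigma)\,\omega(C_{1,\omega})$ for some absolute $\sigma \in (0,1)$.

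The bound for general $q \in (0,1]$ then follows from a Krylov-Safonov-type iteration on a tower of weighted intrinsic sub-cylinders produced by a weighted Calder\'on-Zygmund decomposition of the contact set. Each step either produces the target lower bound or consumes a fixed fraction of the weighted measure of $A$; after $k \sim \log(1/q)/\log(1/(1-\sigma))$ iterations the measure hypothesis is exhausted, giving $\inf_{B_{1/2}} v(\cdot,1) \geq N c_0^{k} \geq N q^{\gamma_0}$ with $\gamma_0 = \log(1/c_0)/\log(1/(1-\sigma)) > 1$. The $A_{1+\frac{1}{n}}$ doubling of $\omega$ and $\omega^{-n}$, quantified by $[\omega]_{A_{1+\frac{1}{n}}} \leq K_0$, keeps the intrinsic-cylinder geometry comparable across every dyadic scale in the iteration, and rescaling to radius $r$ restores the $r^{2}$ factor in the final estimate.

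The principal obstacle I anticipate is making the perturbation from the frozen operator genuinely small in a weighted $L^{n+1}$ sense, uniformly across every scale of the iteration and independent of the truncation parameter $k$. The freezing produces an error controlled by $[[\omega]]_{\textup{BMO}(\cdot,\omega)}$, and the weighted ABP translates this into an $L^{n+1}(\cdot, \omega^{-n})$ bound on $\omega - (\omega)_{B_1}$; absorbing this back into the main term is precisely why $\delta_0$ must be small and is where the BMO-with-respect-to-$\omega$ formulation, rather than classical BMO, becomes indispensable. A secondary technical point is ensuring that the barrier $\psi$ can be constructed with bounds on $(\omega)_{B_1} a_{ij} D_{ij}\psi$ that survive the scaling, and that the comparison argument on the upper half of $Q_{1,\omega}$, where the forcing $\omega \mathbf{1}_A$ vanishes, correctly propagates the information from $C_{1,\omega}$ forward in time to $\bar t$.
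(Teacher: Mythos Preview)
Your broad strategy---reduce to $v$ with forcing $\omega\mathbf{1}_A$, establish a one-scale dichotomy via a barrier and ABP with frozen coefficient, then iterate---matches the paper's in spirit, but two concrete gaps would prevent the argument from closing as written.

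First, the barrier $\psi$ you describe lives on $B_1\times[0,1]$, the upper half of $Q_{1,\omega}$ where the forcing $\omega\mathbf{1}_A$ vanishes identically; comparing $v$ to $\psi$ there via ABP cannot produce the dichotomy, because neither side sees $A$. In the paper the large-density step (Lemma~\ref{large density}) takes place entirely on $C_{r,\omega}$: one decomposes $u\ge u_0-u_1-u_2$ with $\mathcal{L}u_0=(\omega)_{B_r}$, $\mathcal{L}u_1=(\omega)_{B_r}-\omega$, $\mathcal{L}u_2=\omega\chi_{\{\mathcal{L}u<\omega\}}$, bounds $u_1,u_2$ above by ABP, and bounds $u_0$ below by a barrier $\Psi=(t+r^2(\omega^{-n})_{B_r}^{1/n})\psi$ where $\psi$ solves $\mathcal{L}\psi=0$ with parabolic boundary data $(\omega)_{B_r}(1-|x|^2/r^2)$. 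The lower bound $\psi(\cdot,0)\ge\kappa_0(\omega)_{B_r}$ on $B_{r/2}$ already requires a \emph{prop-up lemma} (Lemma~\ref{prop-up}), and the subsequent passage from $t=0$ to $\bar t$ is another invocation of prop-up. So the forward-in-time propagation you call a ``secondary technical point'' is in fact the load-bearing step, used both in the base case and repeatedly in the iteration; it is proved (Lemmas~\ref{baby prop-up}--\ref{prop-up}) via explicit barriers on slant cylinders with the freezing error $\omega-(\omega)_{B_r}$ absorbed by ABP exactly as you anticipate for the perturbation.

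Second, the iteration the paper runs is structurally different from ``consume a fixed fraction of $\omega(A)$.'' It introduces the auxiliary quantity $I(q)$, an infimum over \emph{all} admissible supersolutions with density at least $q$, and proves an alternative (Lemma~\ref{alternative lemma}): the weighted Krylov--Safonov covering (Lemma~\ref{covering-1}) expands $\Gamma$ to a time-shifted set $\hat E_0$; either some covering cylinder has radius $\ge\xi qr/l_0$, giving the direct bound via Lemma~\ref{large density} plus prop-up, or $\hat E_0\cap C_{r,\omega}$ has density $\ge(1+\xi)q$, in which case a local-global comparison principle (Lemma~\ref{local-global}) yields $I(q)\ge I((1+\xi)q)/N_2$. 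Iterating this recurrence from $I(1)\ge\kappa_1 r^2$ gives $I(q)\ge Nq^{\gamma_0}r^2$. The local-global lemma---which upgrades pointwise local comparisons on sub-cylinders to a global inequality between two solutions of $\mathcal{L}\bar u=\omega\chi_\Gamma$ and $\mathcal{L}\bar v=\omega\chi_{\Gamma_0}$---is what links the covering geometry to the functional inequality for $I$, and it has no analogue in your outline.
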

\noindent 
It is important to note that in Theorem \ref{M-thm}, the constants $\delta_0, \gamma_0$, and $N$ are independent of the smoothness of $\omega$ and of the boundedness constant $k$ appearing in \eqref{omega-cond-12-15}. This allows us to use the regularization and truncation to remove the smoothness and boundedness conditions of $\omega$ in applications. This is actually one of the key points in the proof of Theorem \ref{Lin thm}. 

\smallskip

Let us now briefly discuss some known regularity results related to equations with singular-degenerate coefficients. Moser Harnack inequalities and H\"{o}lder regularity estimates for linear elliptic and parabolic equations in divergence form with singular-degenerate coefficients have been studied extensively since the 1970s. For example, see \cite{Ser, Fabes, Trud} for some classical work, and \cite{AFV, Bella, Pop, Sir, Sir-1} and the references therein for some recent work and related results. Krylov-Safonov Harnack inequalities and weighted interior H\"{o}lder regularity estimates for solutions $u$ of homogeneous parabolic equations with singular-degenerate coefficients $\mathcal{L}u=0$ are just recently proved in \cite{Cho-Fang-Phan}. See also the work \cite{IS, Le, Mooney} for the closely related results for elliptic equations.  On the other hand, well-posedness and regularity estimates in weighted Sobolev spaces for solutions to linear elliptic and parabolic equations with singular-degenerate coefficients in various settings have been recently studied and developed; see \cite{BDGP, CMP, Dong-Phan, Dong-Phan-Tran, Fang-Phan-div, Ji, Men}, for example. To the best of our knowledge, Theorem~\ref{Lin thm} and Theorem~\ref{M-thm} appear for the first time in the literature. Again, as discussed in \cite[Section 9.4]{Krylov-book}, Theorem \ref{Lin thm} and Theorem \ref{M-thm} provide important ingredients in the study of $L^p$-theory for fully nonlinear parabolic equations with singular-degenerate coefficients.

\smallskip
To prove Theorem \ref{Lin thm} and Theorem \ref{M-thm}, it is essential to control the heat propagation in space and time. For this purpose, barrier functions and the maximum principle are used, and this analysis relies heavily on the ellipticity and boundedness of the coefficients. In our setting, to overcome the unboundedness and degeneracy of the coefficients, we make use of a perturbation technique introduced in \cite{Cho-Fang-Phan} by freezing $\omega$, and then applying the parabolic ABP estimates on barrier functions to control solutions.  The class of weighted cylinders $C_{r, \omega}(Y)$, which is invariant under the scaling and dilation for the class of operators $\mathcal{L}$, is then essential. The analysis for this is carried out in Section \ref{prop-up-section}, in which two important prop-up lemmas are proved. It is important to note that the determinant of the coefficient matrix $\omega(x) (a_{ij}(x,t))$ is of order $\omega(x)^{n}$. Then, as the parabolic ABP estimates are used, we require $\mathcal{L}u$ to be in the weighted Lebesgue space $  L^{n+1}(U, \omega^{-n})$. As such, it is natural that $u \in \mathcal{W}^{2,1}_{n+1}(U, \omega)$; see the discussion after Theorem \ref{ABP} below for more details. We also note that in  \cite{Fang-Phan-div}, similar weighted Sobolev spaces are considered in which the well-posedness and regularity estimates for the same class of equations, but in divergence form, are proved. As such, the well-posedness and regularity estimates in this type of weighted Sobolev spaces for the class of equations $\mathcal{L}u =f$ are planned to be studied in \cite{Fang-Phan-non}.

\smallskip
After the prop-up Lemmas are established, we adjust and follow the method in \cite{Krylov-2010, Krylov-2012} to prove the main results. Due to the unboundedness and degeneracy of $\omega$ and the inhomogeneity of the weighted cylinders, technical difficulties are addressed at various points in the approach. For example, a weighted version of the Krylov-Safonov covering lemma is proved. In many steps, we require classical results on existence, uniqueness, and regularity estimates for solutions to the class of equations $\mathcal{L} u = f$. For such, we truncate $\omega$ and then regularize $\omega$ and $(a_{ij})$. Careful analysis is required to pass the limits when applying the truncation and regularization of the coefficients. Basic results on the stability of Muckenhoupt weights and mean oscillations with respect to weights through the truncation and regularization are established and proved in Section \ref{pre-section}. These foundation results and techniques are of independent interest, and they can be applicable to other problems.

\smallskip
The rest of the paper is organized as follows. In Section \ref{pre-section}, we recall the definition of Muckenhoupt weights and some of their basic properties, and we introduce the class of weighted parabolic cylinders used in the paper. Many basic results on the bounded mean oscillations with weights are proved. In this section, the parabolic ABP theorem and the Krylov-Safonov covering lemma are also stated. In Section \ref{prop-up-section}, we establish and prove two important prop-up lemmas which provide the foundational results for the paper. The proof of Theorem \ref{M-thm} is carried out in Section \ref{pf of Thm1}. Similarly, Section \ref{section of lin's proof} is devoted to the proof of Theorem \ref{Lin thm}. The paper concludes with Appendix \ref{proof-weigh-example} and Appendix \ref{Appendix-A} that provide the proofs of Remark \ref{remark-example} and of the weighted Krylov-Safonov covering lemma, respectively.
\section{Preliminaries} \label{pre-section}
\subsection{Muckenhoupt weights and weighted parabolic cylinders} We recall the definition of the $A_p$-Muckenhoupt class of weights introduced in \cite{Muckenhoupt}, and point out some of its important properties needed in the paper.  Throughout the paper, for a locally integrable function $f$ defined in a neighborhood of $B_r(x_0)$, its mean on $B_r(x_0) \subset \R^n$ is denoted by 
\[
(f)_{B_r(x_0)}=\fint_{B_r(x_0)}f(x)\,dx = \frac{1}{|B_r(x_0)|} \int_{B_r(x_0)} f(x)\, dx,
\]
where $|B_r(x_0)|$ denotes the Lebesgue measure of $B_{r}(x_0)$.

\begin{definition} \label{A-p-def} Let $p \in (1, \infty)$, and $\mu : \R^n \rightarrow \R$ be a non-negative locally integrable function. We say that $\mu$ belongs to the $A_p$ Muckenhoupt class if  $[\mu]_{A_p} <\infty$, where
\[
[\mu]_{A_p}=\sup_{B_r(x_0) \subset \mathbb{R}^{n}} 
\left(\fint_{B_r(x_0)}\mu(x)\, dx\right)\left(\fint_{B_r(x_0)}\mu(x)^{-\frac{1}{p-1}}\, dx\right)^{p-1}.
\]
\end{definition}
\noindent
We observe that if $\mu \in A_p$ then $[\mu]_{A_p} \geq 1$. 
Indeed, as $1<p<\infty$, for each ball $B\subset \mathbb{R}^n$, it follows from H\"{o}lder's inequality that 
\begin{align*}
1= \fint_{B} \mu(x)^{\frac{1}{p}}\mu(x)^{-\frac{1}{p}} \, dx 
\leq \Big(\fint_{B}\mu(x)\, dx \Big)^{\frac{1}{p}} \Big(\fint_{B}\mu(x)^{-\frac{1}{p-1}} \,dx \Big)^{\frac{p-1}{p}}
\leq [\mu]_{A_p}^{\frac{1}{p}}.
\end{align*}
As a result, for the $\omega$ in \eqref{parabolic operator}, if $\omega \in A_{1+\frac{1}{n}}$, then
\begin{equation} \label{A-1-1/n}
1 \leq  (\omega)_{B} (\omega^{-n})_{B}^{\frac{1}{n}}  \leq [\omega]_{A_{1+\frac{1}{n}}}\quad \text{for every ball}\quad B\subset \R^n.
\end{equation}
From this, we see that the function $\omega(x)^{-n}, x \in \mathbb{R}^n$ is a non-negative locally integrable function. In particular, it follows directly from Definition \ref{A-p-def} that $\omega^{-n} \in A_{n+1}$ with
\begin{equation} \label{omega-n-wei-12-22}
[\omega^{-n}]_{A_{n+1}} = [\omega]_{A_{1+\frac{1}{n}}}^n.
\end{equation}
The following classes of weighted parabolic cylinders are used in the paper.
\begin{definition} \label{cylinder-def} Let $\omega \in A_{1+\frac{1}{n}}$. For $r > 0$ and $Y = (y, s) \in \mathbb{R}^{n}\times \R$, we define two weighted non-homogeneous parabolic cylinders centered at $Y$ and of radius $r$ by
\begin{equation}\label{cylinder def-1}
C_{r,\,\omega}(Y)= B_r(y) \times \big(s-r^2(\omega^{-n})_{B_r(y)}^{1/n},\, s\big),
\end{equation}
and 
\begin{equation}\label{cylinder def-2}
Q_{r,\,\omega}(Y)=B_r(y)\times (s-r^{2}(\omega^{-n})_{B_r(y)}^{1/n},\, s+r^{2}(\omega^{-n})_{B_r(y)}^{1/n}).
\end{equation}
\end{definition}
\noindent
From Definition \ref{cylinder-def}, we note that $\omega\big(Q_{r,\, \omega}(Y)\big)=2\omega(C_{r,\, \omega}(Y))$ and
\begin{equation*}
\omega(C_{r,\, \omega}(Y))= \omega(B_r)r^2(\omega^{-n})_{B_r}^{1/n}=\sigma_nr^{n+2}(\omega)_{B_r}(\omega^{-n})_{B_r}^{1/n},\quad \text{where}\quad \sigma_n=|B_1|.
\end{equation*}
Moreover, by \eqref{A-1-1/n}, we have the following property:
\begin{equation}\label{cylinder measure}
\sigma_nr^{n+2}\leq \omega(C_{r,\, \omega}(Y))\leq [\omega]_{A_{1+\frac{1}{n}}}\sigma_nr^{n+2}, \quad \text{for all}\quad r>0,\ Y\in \mathbb{R}^{n} \times \mathbb{R}.
\end{equation}
Besides, the parabolic boundaries of the cylinders $C_{r, \omega}(Y)$ and $Q_{r, \omega}(Y)$ are given by
\[
\partial' C_{r,\, \omega}(Y)= \left(\partial B_r(y) \times \big(s-r^2 (\omega^{-n})_{B_r(y)}^{1/n},\, s\big)\right) \cup \left(\overline{B}_r(y) \times \big\{s-r^2(\omega^{-n})_{B_r(y)}^{1/n}\big\} \right),
\] 
and
\begin{align*}
\partial' Q_{r,\, \omega}(Y)
= &\left(\partial B_r(y) \times \big(s-r^2 (\omega^{-n})_{B_r(y)}^{1/n},\, s+r^2 (\omega^{-n})_{B_r(y)}^{1/n}\big)\right)\\
&\cup \left(\overline{B}_r(y) \times \big\{s-r^2(\omega^{-n})_{B_r(y)}^{1/n}\big\} \right).
\end{align*}

\smallskip
In the rest of the subsection, we introduce two results on Muckenhoupt weights that are needed in the paper. The following proposition asserts that a truncation of an $A_p$ weight is still an $A_p$ weight. For this purpose, we define the upper and lower truncation operators $T^k: \mathbb{R}\rightarrow \mathbb{R}$ and  $T_k: \mathbb{R}\rightarrow \mathbb{R}$ at level $k$ as follows:
\begin{equation}\label{truncation operators}
T^k(s)=\min\{k, s\}\quad \text{and}\quad T_k(s)=\max\{k, s\},\quad  s \in \mathbb{R}.
\end{equation}
Although the proposition seems to be fundamental, we could not locate a proof in the literature. We include the proof of the proposition for completeness; see also \cite[exercise 7.1.8, p.~ 513]{Grafakos} for the question on the upper truncation $T^k\circ \mu$ with a slightly different estimate.
\begin{proposition} \label{cut-omega} 
Let $\mu \in A_{p}$ for some $p \in (1, \infty)$. Then, for each $k>0$, we have
\begin{equation} \label{up-low-weight}
[T^k\circ \mu]_{A_p} \leq \Big([\mu]_{A_{p}}^{\frac{1}{p-1}} + 1\Big)^{p-1} \quad \text{and} \quad [T_k\circ \mu]_{A_p} \leq [\mu]_{A_{p}}+ 1.
\end{equation}
Moreover, for each $0 < s < \tau < \infty$, define the truncated weight $\bar \mu$ by
\[
\bar{\mu}(x) = \left\{\begin{array}{ll}
\mu(x) & \quad \text{if} \quad s \leq \mu(x) \leq \tau,\\ \smallskip
s & \quad \text{if} \quad \mu(x) < s, \\ \smallskip
\tau & \quad \text{if} \quad \mu(x) > \tau,
\end{array} \qquad x\in\mathbb{R}^n.\right.
\]
Then 
\begin{equation}\label{truncate-A-p-estimate}
[\bar{\mu}]_{A_p} \leq 2^{\max\{p-2,\, 0\}}\left([\mu]_{A_{p}}+1\right)+1.
\end{equation}
Particularly, when $p=1+\frac{1}{n}$, we have
\begin{equation} \label{truncate-A-p-est}
[\bar{\mu}]_{A_{1+\frac{1}{n}}} \leq [\mu]_{A_{1+\frac{1}{n}}}+2.
\end{equation}
\end{proposition}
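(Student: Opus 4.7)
The plan is to prove the lower-truncation bound in \eqref{up-low-weight} by a direct case analysis on a single ball, then deduce the upper-truncation bound via the standard $A_p$--$A_{p'}$ duality, and finally obtain \eqref{truncate-A-p-estimate} and \eqref{truncate-A-p-est} by writing $\bar\mu$ as a composition of the two truncation operators.

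First I would fix an arbitrary ball $B\subset\mathbb{R}^n$ and abbreviate $A=\fint_B\mu$ and $\beta=\fint_B\mu^{-1/(p-1)}$, so that $A\beta^{p-1}\le[\mu]_{A_p}$. For $T_k\circ\mu=\max\{k,\mu\}$ I would use the two elementary bounds $T_k(\mu)\le k+\mu$ and $T_k(\mu)^{-1/(p-1)}=\min\{k^{-1/(p-1)},\mu^{-1/(p-1)}\}\le\min\{k^{-1/(p-1)},\beta\text{ (after averaging)}\}$, and then split into the cases $\beta^{p-1}\le 1/k$ and $\beta^{p-1}>1/k$. In the first case the product is at most $(k+A)\beta^{p-1}\le k\beta^{p-1}+A\beta^{p-1}\le 1+[\mu]_{A_p}$; in the second case it is at most $(k+A)/k=1+A/k\le 1+A\beta^{p-1}\le 1+[\mu]_{A_p}$. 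Taking the supremum over $B$ yields the second inequality in \eqref{up-low-weight}.

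Next, for $T^k\circ\mu=\min\{k,\mu\}$ I would invoke the duality $\sigma:=\mu^{-1/(p-1)}\in A_{p'}$ with $p'=p/(p-1)$ and $[\sigma]_{A_{p'}}=[\mu]_{A_p}^{1/(p-1)}$. The key observation is the identity
\[
\bigl(T^k\circ\mu\bigr)^{-1/(p-1)}=\max\bigl\{k^{-1/(p-1)},\,\mu^{-1/(p-1)}\bigr\}=T_{k^{-1/(p-1)}}\circ\sigma,
\]
so that the previously proved lower-truncation bound applied to $\sigma\in A_{p'}$ gives
$[T_{k^{-1/(p-1)}}\circ\sigma]_{A_{p'}}\le[\sigma]_{A_{p'}}+1=[\mu]_{A_p}^{1/(p-1)}+1$. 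Using the duality identity $[T^k\circ\mu]_{A_p}=[T_{k^{-1/(p-1)}}\circ\sigma]_{A_{p'}}^{p-1}$ then produces exactly the first inequality in \eqref{up-low-weight}.

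For the two-sided truncation, I would note the pointwise identity $\bar\mu=T_s\circ(T^\tau\circ\mu)$ (valid because $s<\tau$), so that
\[
[\bar\mu]_{A_p}\le[T^\tau\circ\mu]_{A_p}+1\le\bigl([\mu]_{A_p}^{1/(p-1)}+1\bigr)^{p-1}+1.
\]
To pass to \eqref{truncate-A-p-estimate} I would apply the elementary inequality $(x+1)^{p-1}\le 2^{\max\{p-2,0\}}(x^{p-1}+1)$ (which follows from subadditivity of $t\mapsto t^{p-1}$ when $p\le 2$ and from the standard power-mean estimate $(x+y)^q\le 2^{q-1}(x^q+y^q)$ for $q\ge 1$ when $p>2$) with $x=[\mu]_{A_p}^{1/(p-1)}$. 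The special case $p=1+\frac1n$ gives $\max\{p-2,0\}=0$, hence the constant $2^0=1$, which yields \eqref{truncate-A-p-est}.

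I do not expect any serious obstacle; the only delicate point is to track the constants carefully in part 3 so that one recovers precisely $2^{\max\{p-2,0\}}([\mu]_{A_p}+1)+1$ and not the slightly weaker $2^{\max\{p-2,0\}}([\mu]_{A_p}+2)$ that a less careful application of the power-mean inequality would produce. Keeping the ``$+1$'' outside of the factor $2^{\max\{p-2,0\}}$ (by first applying the lower-truncation bound, which adds $+1$ outside, and only then invoking the upper-truncation bound inside) is what makes the estimate sharp enough for the stated consequence in the critical exponent $p=1+\frac1n$.
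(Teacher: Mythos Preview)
Your proof is correct and arrives at the same constants as the paper. The route differs in one notable way: for the upper truncation $T^k\circ\mu$ the paper argues directly (split the average of $(T^k\circ\mu)^{-1/(p-1)}$ over $\{\mu>k\}$ and its complement, then distribute), whereas you first prove the lower-truncation bound and then deduce the upper-truncation bound from it via the duality $[\nu]_{A_p}=[\nu^{-1/(p-1)}]_{A_{p'}}^{p-1}$ together with the pointwise identity $(T^k\circ\mu)^{-1/(p-1)}=T_{k^{-1/(p-1)}}\circ\sigma$. This is a clean observation that halves the amount of direct computation; the price is having to verify the duality identity carefully. Your lower-truncation argument is also organized slightly differently---a case split on whether $\beta^{p-1}\le 1/k$ rather than the paper's direct distribution $(k+A)\cdot\min\{k^{-1},\beta^{p-1}\}\le k\cdot k^{-1}+A\beta^{p-1}$---but the content is identical. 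The treatment of $\bar\mu=T_s\circ(T^\tau\circ\mu)$ and the final elementary inequality match the paper exactly.
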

\begin{proof} Let $B = B_r(x_0)$ for any $r>0$ and $x_0 \in \mathbb{R}^n$. We consider the weight $T^k\circ \mu$.  Let us denote
$
E= \left\{x \in B: \mu(x) >k \right\}.
$
Then,
\begin{align*}
\fint_{B} [T^k\circ \mu(x)]^{-\frac{1}{p-1}}\, dx 
&=k^{-\frac{1}{p-1}} \frac{|E|}{|B|}+\frac{1}{|B|} \int_{B\setminus E} \mu(x)^{-\frac{1}{p-1}}\, dx \\
&\leq k^{-\frac{1}{p-1}}+\fint_{B}\mu(x)^{-\frac{1}{p-1}}\, dx.
\end{align*}
By this, and the facts that $T^k\circ \mu(x)\leq k$ and $T^k\circ \mu(x)\leq \mu(x)$, we get
\begin{align*}
&\left(\fint_{B} T^k\circ \mu(x)\, dx \right)^{\frac{1}{p-1}}\left(\fint_{B} [T^k\circ \mu(x)]^{-\frac{1}{p-1}}\, dx\right) \\
& \leq \left(\fint_{B} T^k\circ \mu(x)\, dx \right)^{\frac{1}{p-1}} \left(k^{-\frac{1}{p-1}}+\fint_{B}\mu(x)^{-\frac{1}{p-1}}\, dx \right)\\
&\leq k^{\frac{1}{p-1}}k^{-\frac{1}{p-1}}+\left(\fint_{B} \mu (x)\, dx \right)^{\frac{1}{p-1}} \left(\int_{B} \mu(x)^{-\frac{1}{p-1}}\, dx\right)
\leq  [\mu]_{A_{p}}^{\frac{1}{p-1}} + 1.
\end{align*}
As $B=B_r(x_0)$ is arbitrary, it then follows that
\[ 
[T^k\circ \mu]_{A_p} \leq \Big([\mu]_{A_{p}}^{\frac{1}{p-1}} + 1\Big)^{p-1},
\]
and the first assertion in \eqref{up-low-weight} is proved.

\smallskip
To prove $T_k\circ \mu \in A_p$,  we consider the set
$
A= \{x \in B: \mu(x) < k\}.
$
Then,
\[
\fint_{B} T_k\circ \mu(x)\, dx = k \frac{|A|}{|B|}+\frac{1}{|B|} \int_{B \setminus A} \mu(x)\,dx \leq k+\fint_B \mu(x)\, dx.
\]
Using this, and the facts that $T_k\circ \mu(x)\geq k$ and $T_k\circ \mu(x)\geq \mu(x)$, we see that
\begin{align*}
&\left(\fint_{B} T_k\circ \mu(x)\, dx\right) \left( \fint_{B} [T_k\circ \mu(x)]^{-\frac{1}{p-1}}\, dx \right)^{p-1}\\
&\leq \left( k+\fint_B \mu(x)\, dx\right) \left( \fint_{B} [T_k\circ \mu(x)]^{-\frac{1}{p-1}}\, dx \right)^{p-1} \\
&\leq k \left( k^{-\frac{1}{p-1}} \right)^{p-1}+\left(\fint_{B } \mu(x)\, dx \right)  \left( \fint_{B} \mu(x)^{-\frac{1}{p-1}}\, dx \right)^{p-1}
\leq  [\mu]_{A_p} + 1.
\end{align*} 
Similarly, the second assertion in \eqref{up-low-weight} is then proved.

\smallskip
To prove \eqref{truncate-A-p-estimate}, observe that
\[
\bar{\mu}(x) = \max\big\{\min\{ \mu(x), \tau\},\, s\big\}=T_{s}\circ [T^{\tau}\circ \mu(x)].
\]
It then follows from \eqref{up-low-weight} that
\begin{align*}
[\bar \mu]_{A_p}
&\leq \Big([\mu]_{A_p}^{\frac{1}{p-1}}+1\Big)^{p-1}+1 \\
&\leq 2^{\max\{p-2,\, 0\}}\left([\mu]_{A_{p}}+1\right)+1,
\end{align*}
where in the last step we used the elementary inequality $(a+b)^q\leq 2^{\max\{q-1,\, 0\}}(a^q+b^q)$ for all $a,\, b\geq 0$ and $q>0$. 
This proves \eqref{truncate-A-p-estimate}, and \eqref{truncate-A-p-est} follows directly from \eqref{truncate-A-p-estimate}. The proof of the lemma is completed.
\end{proof}

We conclude this subsection with the following proposition on the stability of  Muckenhoupt weights under regularization by convolution with mollifiers. We remark that the case for $A_1$-weights is proved in \cite[Lemma 2.1]{Kiku}. However, the general case for $A_p$-weights with $p \in (1, \infty)$ does not seem to be written in the literature. We include the statement and its proof for completeness.
\begin{proposition} \label{A-p-regularization} 
Let $\phi \in C_c^\infty(B_1)$ satisfy $0 \leq \phi \leq 1$ and $\int_{\mathbb{R}^n} \phi (x)\, dx= 1$.  For $\varepsilon>0$ and a given $\mu  \in A_{p}$ with $p \in (1,\infty)$, define
\[
\mu_\varepsilon = \mu * \phi_\varepsilon, \quad \text{where}\quad
\phi_\varepsilon(x) = \varepsilon^{-n}\phi(x/\varepsilon), \quad x \in \mathbb{R}^n. 
\]
Then $\mu_\varepsilon \in A_p$, and
\begin{equation}\label{mu-epsi-ap}
[\mu_\varepsilon]_{A_p} \leq 2^{np}\, [\mu]_{A_p}, \qquad \forall\, \varepsilon>0.
\end{equation}
Particularly, when $p=1+\frac{1}{n}$, we have
\begin{equation}\label{mu-epsi-1ap}
[\mu_\varepsilon]_{A_{1+\frac{1}{n}}} \leq 2^{n+1}\, [\mu]_{A_{1+\frac{1}{n}}}, \qquad \forall\, \varepsilon>0.
\end{equation}
\end{proposition}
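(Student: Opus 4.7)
My plan is to reduce the $A_p$-constant of the regularization $\mu_\varepsilon$ to that of $\mu$ by combining Jensen's inequality (to handle the negative power $\mu_\varepsilon^{-1/(p-1)}$, which does not commute with convolution) with Fubini's theorem (to transfer integrals of $\mu_\varepsilon$ over a ball to integrals of $\mu$ over a slightly enlarged ball).

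Since $\phi_\varepsilon(y)\,dy$ is a probability measure on $\mathbb{R}^n$ and $t\mapsto t^{-1/(p-1)}$ is convex on $(0,\infty)$ for $p>1$, Jensen's inequality yields the pointwise bound
\[
\mu_\varepsilon(x)^{-\frac{1}{p-1}}\leq \big(\mu^{-\frac{1}{p-1}}*\phi_\varepsilon\big)(x), \qquad x\in\mathbb{R}^n.
\]
For any ball $B_r(x_0)$, Fubini combined with $\mathrm{supp}\,\phi_\varepsilon\subset B_\varepsilon(0)$ gives
\[
\int_{B_r(x_0)}\mu_\varepsilon \leq \int_{B_{r+\varepsilon}(x_0)}\mu \quad \text{and}\quad \int_{B_r(x_0)}\mu_\varepsilon^{-\frac{1}{p-1}} \leq \int_{B_{r+\varepsilon}(x_0)}\mu^{-\frac{1}{p-1}},
\]
because $B_r(x_0)-y\subset B_{r+\varepsilon}(x_0)$ for every $y\in B_\varepsilon(0)$. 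Raising the second to the $(p-1)$-th power, multiplying with the first, dividing by $|B_r|^p$, and invoking the $A_p$ condition for $\mu$ on $B_{r+\varepsilon}(x_0)$ produces
\[
\Big(\fint_{B_r(x_0)}\mu_\varepsilon\Big)\Big(\fint_{B_r(x_0)}\mu_\varepsilon^{-\frac{1}{p-1}}\Big)^{p-1} \leq \Big(\frac{r+\varepsilon}{r}\Big)^{np}[\mu]_{A_p}.
\]
Whenever $r\geq\varepsilon$, the prefactor is at most $2^{np}$, which is precisely the target bound.

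The main obstacle is the small-radius regime $r<\varepsilon$, where the factor $\big((r+\varepsilon)/r\big)^{np}$ blows up. I would resolve it by exploiting that $\mu_\varepsilon$ varies only on scales $\gtrsim\varepsilon$: using $\|\phi_\varepsilon\|_\infty\leq\varepsilon^{-n}$ together with $B_\varepsilon(x)\subset B_{2\varepsilon}(x_0)$ for $x\in B_r(x_0)\subset B_\varepsilon(x_0)$ gives the uniform-in-$r$ pointwise estimates
\[
\mu_\varepsilon(x)\leq \varepsilon^{-n}\int_{B_{2\varepsilon}(x_0)}\mu \quad \text{and}\quad \mu_\varepsilon(x)^{-\frac{1}{p-1}}\leq \varepsilon^{-n}\int_{B_{2\varepsilon}(x_0)}\mu^{-\frac{1}{p-1}},
\]
where the second bound again uses the Jensen step. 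Multiplying these, averaging over $B_r(x_0)$, and invoking the $A_p$ condition for $\mu$ on $B_{2\varepsilon}(x_0)$ delivers a bound of the same form, uniform in $r<\varepsilon$. Taking the maximum over the two regimes yields \eqref{mu-epsi-ap}, and the special case $p=1+\frac{1}{n}$ (so $np=n+1$) produces \eqref{mu-epsi-1ap}.
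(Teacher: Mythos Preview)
Your proposal is correct and follows essentially the same approach as the paper's proof: Jensen's inequality for the convex map $t\mapsto t^{-1/(p-1)}$, a case split at $r=\varepsilon$, Fubini's theorem to pass to the enlarged ball $B_{r+\varepsilon}(x_0)$ when $r\geq\varepsilon$, and the pointwise bound via $\|\phi_\varepsilon\|_\infty\leq\varepsilon^{-n}$ together with $B_\varepsilon(x)\subset B_{2\varepsilon}(x_0)$ when $r<\varepsilon$. The structure and the key estimates are identical to those in the paper.
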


\begin{proof} We fix $\varepsilon>0$. By the definition of convolution, we have
\begin{equation}\label{mu-epsi}
\mu_{\varepsilon}(x)=\int_{\mathbb{R}^n}\mu(x-y)\phi_{\varepsilon}(y)\, dy,\qquad \forall\, x\in \mathbb{R}^n.
\end{equation}
Since the function $s \mapsto s^{-\frac{1}{p-1}}$ is a convex function for $s\in (0,\infty)$, and $\phi_{\varepsilon}(y)dy$ is a probability measure, it follows from Jensen's inequality that for all $x\in \R^n$,
\begin{equation}\label{mu-epsi-1}
   \mu_{\varepsilon}(x)^{-\frac{1}{p-1}}
   =\left(\int_{\R^n}\mu(x-y)\phi_{\varepsilon}(y)\, dy\right)^{-\frac{1}{p-1}}
   \leq \int_{\R^n}\mu(x-y)^{-\frac{1}{p-1}}\phi_{\varepsilon}(y)\, dy.
\end{equation}

\smallskip
To prove \eqref{mu-epsi-ap}, we fix a ball $B_r(x_0)\subset \R^n$ with $r>0$, and consider the followings. 

\smallskip
\noindent
\textbf{Case 1: $r\geq \varepsilon$.}
From this, \eqref{mu-epsi}, and Fubini's theorem, it follows that
\begin{align*}
(\mu_{\varepsilon})_{B_r(x_0)}
&=\frac{1}{|B_{r}(x_0)|}\int_{\R^n}\phi_{\varepsilon}(y)\left(\int_{B_r(x_0)}\mu(x-y)\, dx\right)\, dy\\
&\leq \frac{1}{|B_{r}(x_0)|}\int_{B_{r+\varepsilon}(x_0)}\mu(x)\, dx\\
&=\left(\frac{r+\varepsilon}{r}\right)^n(\mu)_{B_{r+\varepsilon}(x_0)}\leq 2^n\, (\mu)_{B_{r+\varepsilon}(x_0)}.
\end{align*}
Similarly, by \eqref{mu-epsi-1}, Fubini's theorem, and the assumption that $r\geq \varepsilon$, 
\begin{align*}
\big(\mu_{\varepsilon}^{-\frac{1}{p-1}}\big)_{B_r(x_0)}
=2^n\,\big(\mu^{-\frac{1}{p-1}}\big)_{B_{r+\varepsilon}(x_0)}.
\end{align*}
Combining the last two estimates yields
\begin{equation}\label{r-geq-epsi}
\big(\mu_{\varepsilon}\big)_{B_r(x_0)} \big(\mu_{\varepsilon}^{-\frac{1}{p-1}}\big)_{B_r(x_0)}^{p-1} \le 2^{np}\, \big(\mu\big)_{B_{r+\varepsilon}(x_0)}\big(\mu^{-\frac{1}{p-1}}\big)_{B_{r+\varepsilon}(x_0)}^{p-1}
\le 2^{np}\, [\mu]_{A_p}.
\end{equation}

\smallskip
\noindent
\textbf{Case 2: $r<\varepsilon$.} From this, \eqref{mu-epsi-1}, and the definition of $\phi_{\varepsilon}$, we obtain
\begin{align*}
\mu_{\varepsilon}(x) &\le \varepsilon^{-n}\int_{B_{\varepsilon}}\mu(x-y)\, dy\\ 
&\le \varepsilon^{-n}\int_{B_{2\varepsilon}(x_0)}\mu(y)\, dy
=2^n \, (\mu)_{B_{2\varepsilon}(x_0)}, \qquad \forall \, x\in B_r(x_0).
\end{align*}
Hence,
\[
(\mu_{\varepsilon})_{B_r(x_0)}\leq 2^n \, (\mu)_{B_{2\varepsilon}(x_0)}.
\]
Similarly, by \eqref{mu-epsi-1}, the definition of $\phi_{\varepsilon}$, and the assumption that $r<\varepsilon$, it follows that
\[
\big(\mu_{\varepsilon}^{-\frac{1}{p-1}}\big)_{B_r(x_0)}\leq 2^n\, \big(\mu^{-\frac{1}{p-1}}\big)_{B_{2\varepsilon}(x_0)}.
\]
Thus, the previous two estimates imply that
\begin{equation}\label{r-leq-epsi}
\big(\mu_{\varepsilon}\big)_{B_r(x_0)} \big(\mu_{\varepsilon}^{-\frac{1}{p-1}}\big)_{B_r(x_0)}^{p-1} \le 2^{np}\, \big(\mu\big)_{B_{2\varepsilon}(x_0)}\big(\mu^{-\frac{1}{p-1}}\big)_{B_{2\varepsilon}(x_0)}^{p-1}
\le 2^{np}\, [\mu]_{A_p}.
\end{equation}

\smallskip
Therefore, from \eqref{r-geq-epsi} and \eqref{r-leq-epsi}, and since $B_r(x_0)$ is arbitrary, we obtain
\[
[\mu_{\varepsilon}]_{A_p}\leq 2^{np}\, [\mu]_{A_p}.
\]
Since $\varepsilon>0$ is also arbitrary, then \eqref{mu-epsi-ap} is proved and \eqref{mu-epsi-1ap} follows directly from \eqref{mu-epsi-ap}. The proof of the lemma is completed.
\end{proof}

\subsection{Bounded mean oscillations with respect to weights and their stability}  \label{bmo-sub-sec}  To prove Theorem \ref{Lin thm}, in some steps, we truncate $\omega$ to avoid its degeneracy and singularity, and then regularize $\omega$ and $(a_{ij})$ so that we can use the classical results such as existence and uniqueness of solutions for parabolic equations in non-divergence form, and comparison principles. Therefore, we need to control the mean oscillations with respect to weights through the truncation and regularization. These basic results are proved in this subsection.

\smallskip
For the reader's convenience, let us recall the following definition on the class of functions that have bounded mean oscillation with respect to a weight, introduced in \cite{MW}.
\begin{definition} Let $\mu \in L^1_{\textup{loc}}(\mathbb{R}^n)$ be non-negative and $f \in L^1_{\textup{loc}}(\Omega)$ with some nonempty open set $\Omega \subset \mathbb{R}^n$. We say that $f$ is in $\textup{BMO}(\Omega, \mu)$ if
\[
[[f]]_{\textup{BMO}(\Omega, \mu)} = \sup_{B_r(x_0) \subset \Omega} \frac{1}{\mu(B_r(x_0))} \int_{B_r(x_0)} |f(x) - (f)_{B_r(x_0)}|\, dx <\infty.
\]
\end{definition}
\noindent
For a given number $q \in [1, \infty)$, we denote
\[
[[f]]_{\textup{BMO}_q(\Omega, \mu)} = \left( \sup_{B_r(x_0) \subset \Omega} \frac{1}{\mu(B_r(x_0))} \int_{B_r(x_0)} |f(x) - (f)_{B_r(x_0)}|^q \mu(x)^{1-q}\, dx \right)^{\frac{1}{q}}.
\]
The following John-Nirenberg type lemma is proved in \cite[Theorem 4]{MW}.
\begin{lemma} \label{bmo-lemma} Let $K_0 \geq 1$, $p \in (1, \infty)$, and $q \in [1, \frac{p}{p-1}]$. There are positive constants $\bar N_1 = \bar N_1(n, p, q, K_0)$ and $\bar N_2 =\bar N_2(n, p, q, K_0)$ such that the following assertion holds. If $\mu \in A_{p}$ with $[\mu]_{A_p} \leq K_0$, then
\[
\bar N_1 [[f]]_{\textup{BMO}_q(\Omega, \mu)}   \leq [[f]]_{\textup{BMO}(\Omega, \mu)}  \leq \bar N_2 [[f]]_{\textup{BMO}_q(\Omega, \mu)},
\]
for all $f \in \textup{BMO}(\Omega, \mu)$.
\end{lemma}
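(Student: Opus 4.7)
I would prove the two inequalities in Lemma \ref{bmo-lemma} separately, the upper bound being elementary while the lower bound requires a weighted John-Nirenberg exponential estimate in the spirit of Muckenhoupt--Wheeden.

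The upper bound $[[f]]_{\textup{BMO}(\Omega,\mu)} \le \bar N_2 [[f]]_{\textup{BMO}_q(\Omega,\mu)}$ follows from H\"older's inequality applied inside $\int_B|f-(f)_B|\,dx$ with the factorization $1=\mu^{(1-q)/q}\cdot\mu^{(q-1)/q}$ and conjugate exponents $q$ and $q/(q-1)$ (the case $q=1$ being trivial), yielding
\[
\int_B|f-(f)_B|\,dx \le \Big(\int_B |f-(f)_B|^{q}\mu^{1-q}\,dx\Big)^{1/q}\mu(B)^{(q-1)/q},
\]
so that dividing by $\mu(B)$ gives the bound with $\bar N_2 = 1$, independently of the $A_p$ constant.

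The lower bound is the main content. Writing $M := [[f]]_{\textup{BMO}(\Omega,\mu)}$, the plan is first to establish a weighted John--Nirenberg inequality: there exist $c_1,c_2>0$ depending only on $n,p,K_0$, such that for every ball $B\subset \Omega$ and every $\lambda>0$,
\[
\mu\big(\{x\in B:\, |f(x)-(f)_B|>\lambda\}\big)\le c_1\, e^{-c_2\lambda/M}\,\mu(B).
\]
This would be proved by a weighted Calder\'on--Zygmund stopping-time argument: fix a threshold $\alpha$ (a suitable multiple of $M$) and iteratively select the maximal dyadic subcubes of $B$ where the Lebesgue average of $|f-(f)_B|$ first exceeds $\alpha$. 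The $A_p$ comparison $(|E|/|B|)^p \le [\mu]_{A_p}\,\mu(E)/\mu(B)$ for $E\subset B$ converts the Lebesgue-measure control produced by the stopping rule into a fixed contraction ratio, strictly less than $1$, for the $\mu$-measure of the union of selected cubes; iterating $k$ times yields geometric decay of the $\mu$-measure of the exceptional set at level $k\alpha$, hence the stated exponential bound.

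To pass from this decay to the $\textup{BMO}_q$ estimate, I would combine the layer-cake formula with the reverse H\"older inequality for $A_p$-weights, which gives some $\varepsilon>0$ depending on $n,p,K_0$ with $(\fint_B \mu^{1+\varepsilon})^{1/(1+\varepsilon)}\le C\fint_B \mu$; dually, $\mu^{-1/(p-1)}\in A_{p/(p-1)}$, and the hypothesis $q\le p/(p-1)$ ensures $\mu^{1-q}=\mu^{-(q-1)}$ is uniformly locally integrable in terms of $\mu$. Decomposing $\int_B|f-(f)_B|^q\mu^{1-q}\,dx$ by layer cake, bounding each inner integral $\int_{E_\lambda}\mu^{1-q}\,dx$ by a power of $|E_\lambda|/|B|$ (and thus of $\mu(E_\lambda)/\mu(B)$ via the $A_p$ comparison) times $\int_B\mu^{1-q}\,dx$, and inserting the exponential decay in $\lambda$, produces $\big(\tfrac{1}{\mu(B)}\int_B|f-(f)_B|^q\mu^{1-q}\,dx\big)^{1/q}\le C M$ with $C=C(n,p,q,K_0)$. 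I expect the main obstacle to be the weighted John--Nirenberg decay itself: ensuring the per-iteration contraction constant and exponential rate depend only on $n,p,K_0$ (and not on $f$), and then tracking constants carefully through the layer-cake/reverse-H\"older combination so that the admissible upper endpoint $q=p/(p-1)$ is genuinely attained.
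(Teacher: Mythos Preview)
Your proposal is essentially correct and follows the classical Muckenhoupt--Wheeden approach. The paper itself does not give a proof of this lemma: it simply cites \cite[Theorem~4]{MW} and moves on. Your sketch---H\"older's inequality for the upper bound, and a weighted John--Nirenberg exponential decay (via a Calder\'on--Zygmund stopping-time argument using the $A_p$ comparison $(|E|/|B|)^p\le[\mu]_{A_p}\,\mu(E)/\mu(B)$) followed by a layer-cake integration for the lower bound---is precisely the strategy in \cite{MW}. The one place where your outline is slightly vague is the final step at the endpoint $q=p/(p-1)$: there you need to control $\int_{E_\lambda}\mu^{-1/(p-1)}\,dx$ directly, and the cleanest way is to use that $\sigma:=\mu^{-1/(p-1)}\in A_{p'}$ (with $[\sigma]_{A_{p'}}=[\mu]_{A_p}^{1/(p-1)}$) is itself an $A_\infty$ weight, so $\sigma(E_\lambda)/\sigma(B)\le C(|E_\lambda|/|B|)^\delta$ for some $\delta=\delta(n,p,K_0)>0$, and then chain this with $|E_\lambda|/|B|\le K_0^{1/p}(\mu(E_\lambda)/\mu(B))^{1/p}$ and the exponential decay; finally the $A_p$ condition gives $\sigma(B)\le K_0^{1/(p-1)}|B|(\mu(B)/|B|)^{-1/(p-1)}$, which combines with $|B|=\mu(B)\cdot|B|/\mu(B)$ to close the estimate with the correct power of $\mu(B)$. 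With that detail filled in, your argument is complete and matches the cited reference.
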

\noindent
It is important to note that, due to the assumption $\omega \in A_{1+\frac{1}{n}}$ and Lemma \ref{bmo-lemma}, we use the power $q=n+1$ as in \eqref{BMO on ball} instead of $q=1$. This is because the exponent $q=n+1$ appears naturally in the calculations. However, for convenient manipulation, in the remaining part of this subsection, we use $q =1$.

\smallskip
The following proposition gives control of the weighted bounded mean oscillations of the regularization of a given weight.
The proposition is needed in the proof of Theorem \ref{Lin thm}.

\begin{proposition} \label{weighted-BMO-regularization}
Let $\phi \in C_c^\infty(B_1)$ satisfy $0 \leq \phi \leq 1$ and $\int_{\mathbb{R}^n} \phi(x) dx = 1$.  For $\varepsilon>0$ and a given $\mu  \in A_{p}$ with $p \in (1,\infty)$, define
\[
\mu_\varepsilon = \mu * \phi_\varepsilon, \quad \text{where}\quad
\phi_\varepsilon(x) = \varepsilon^{-n}\phi(x/\varepsilon). 
\]
Then, for each $R_0>0$,
\[ 
[[\mu_\varepsilon]]_{\mathrm{BMO}(B_{R_0},\, \mu_\varepsilon)} \le  [[\mu]]_{\mathrm{BMO}(B_{R_0+1},\, \mu)}, \quad \forall \ \varepsilon \in (0,1).
\]
\end{proposition}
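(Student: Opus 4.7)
The plan is to prove the inequality pointwise on each ball $B_r(x_0)\subset B_{R_0}$ by exploiting the fact that convolution with a probability measure commutes well with both the averaging operation and the $L^1$ norm via Fubini and Jensen (here Jensen is just linearity). The key identity I will use is that averaging a convolution equals the convolution of averages: for any ball $B=B_r(x_0)$,
\[
(\mu_\varepsilon)_{B_r(x_0)}=\int_{\R^n}\phi_\varepsilon(y)\,(\mu)_{B_r(x_0-y)}\,dy,
\]
which follows from the definition $\mu_\varepsilon(x)=\int\mu(x-y)\phi_\varepsilon(y)\,dy$ together with Fubini.

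Given that identity, since $\phi_\varepsilon(y)\,dy$ is a probability measure I can write
\[
\mu_\varepsilon(x)-(\mu_\varepsilon)_{B_r(x_0)}=\int_{\R^n}\phi_\varepsilon(y)\Big(\mu(x-y)-(\mu)_{B_r(x_0-y)}\Big)\,dy,
\]
and then the triangle inequality plus Fubini gives
\[
\int_{B_r(x_0)}|\mu_\varepsilon(x)-(\mu_\varepsilon)_{B_r(x_0)}|\,dx
\le \int_{\R^n}\phi_\varepsilon(y)\int_{B_r(x_0-y)}|\mu(z)-(\mu)_{B_r(x_0-y)}|\,dz\,dy,
\]
after the translation $z=x-y$ inside the inner integral. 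Here I use that $\phi_\varepsilon$ is supported in $B_\varepsilon$, so for every $y$ in the effective range and every $B_r(x_0)\subset B_{R_0}$ one has $B_r(x_0-y)\subset B_{R_0+\varepsilon}\subset B_{R_0+1}$ because $\varepsilon\in(0,1)$. This containment is precisely what lets me invoke the definition of $[[\mu]]_{\mathrm{BMO}(B_{R_0+1},\mu)}$ on each shifted ball to bound the inner integral by $[[\mu]]_{\mathrm{BMO}(B_{R_0+1},\mu)}\cdot \mu(B_r(x_0-y))$.

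The last step is to show $\int_{\R^n}\phi_\varepsilon(y)\,\mu(B_r(x_0-y))\,dy=\mu_\varepsilon(B_r(x_0))$, which is another Fubini computation: the left-hand side equals $\int_{B_r(x_0)}\int \phi_\varepsilon(y)\mu(x-y)\,dy\,dx=\int_{B_r(x_0)}\mu_\varepsilon(x)\,dx$. Dividing by $\mu_\varepsilon(B_r(x_0))$ and taking the supremum over $B_r(x_0)\subset B_{R_0}$ then yields the claim. I do not expect a real obstacle here; the only small point requiring attention is the support condition $\varepsilon<1$ used to guarantee $B_r(x_0-y)\subset B_{R_0+1}$, and the verification that the key "averaging-commutes-with-convolution" identity holds, which is just Fubini applied carefully with the non-negative integrand $\mu(x-y)\phi_\varepsilon(y)$.
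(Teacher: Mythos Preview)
Your proposal is correct and follows essentially the same argument as the paper's proof: both use Fubini to identify $(\mu_\varepsilon)_B$ with $\int\phi_\varepsilon(y)(\mu)_{B-y}\,dy$ and $\mu_\varepsilon(B)$ with $\int\phi_\varepsilon(y)\,\mu(B-y)\,dy$, then bound $|\mu_\varepsilon(x)-(\mu_\varepsilon)_B|$ by the convolution of oscillations, translate the inner integral to the shifted ball $B-y\subset B_{R_0+1}$, and invoke the definition of $[[\mu]]_{\mathrm{BMO}(B_{R_0+1},\mu)}$. The only notational difference is that you write $B_r(x_0-y)$ where the paper writes $B-y$.
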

\begin{proof}  Fix a ball $B = B_r(x_0)$ with some $r >0$ and $x_0 \in \mathbb{R}^n$ such that $B\subset B_{R_0}$. We need to estimate the mean oscillation of $\mu_\varepsilon$ on $B$:
\[
\frac{1}{\mu_\varepsilon(B)}\int_B |\mu_\varepsilon(x) - (\mu_\varepsilon)_B|\,dx.
\]
Let us note that
\[
\frac{1}{|B|}\int_B \mu(z-y)\,dz = \frac{1}{|B-y|}\int_{B-y} \mu(x)\,dx= (\mu)_{B-y},
\]
where $B-y = \{x - y: x \in B\} = B_r(x_0 -y)$.
Therefore, it follows from this and Fubini's theorem that
\begin{equation} \label{eps-measure}
\mu_\varepsilon(B) = \int_{B_{\varepsilon}} \phi_\varepsilon(y) \mu (B-y)\, dy \quad \text{and} \quad (\mu_\varepsilon)_B = \int_{B_{\varepsilon}} \phi_\varepsilon(y) (\mu)_{B-y}\, dy.
\end{equation}
Then, we see that
\begin{align*}
|\mu_\varepsilon(x) - (\mu_\varepsilon)_B|
    &=\left| \int_{B_{\varepsilon}} \big( \mu(x-y) - (\mu)_{B-y} \big)\phi_\varepsilon(y)\,dy\right|\\
    &\le \int_{B_{\varepsilon}}|\mu(x-y) - (\mu)_{B-y}|\phi_\varepsilon(y)\,dy.
\end{align*}
From this and by using Fubini's theorem again, we obtain
\[
\frac{1}{\mu_\varepsilon(B)}\int_B |\mu_\varepsilon(x) - (\mu_\varepsilon)_B|\,dx
\le \int_{B_\varepsilon} \phi_\varepsilon(y)
    \left( \frac{1}{\mu_\varepsilon(B)}\int_B |\mu(x-y) - (\mu)_{B-y}|\,dx \right) dy. 
\]
Now, we control the inner integral in the right hand side of the last estimate as follows:
\begin{align*}
\int_B |\mu(x-y) - (\mu)_{B-y}|\,dx  & = \int_{B-y} |\mu(z) - (\mu)_{B-y}|\,dz \\
& \leq \mu(B-y) [\mu]_{\mathrm{BMO}(B_{R_0+1}, \mu)}
\end{align*}
for all $y \in B_\varepsilon$ and for all $\varepsilon \in (0,1)$. Hence, 
\[
\frac{1}{\mu_\varepsilon(B)}\int_B |\mu_\varepsilon(x) - (\mu_\varepsilon)_B|\,dx \leq 
[\mu]_{\mathrm{BMO}(B_{R_0+1}, \mu)}  \frac{1}{\mu_\varepsilon(B)}  \int_{B_\epsilon} \mu(B-y)\phi_\varepsilon(y)\,dy.
\]
This last estimate and the first assertion in \eqref{eps-measure} yield
\[
\frac{1}{\mu_\varepsilon(B)}\int_B |\mu_\varepsilon(x) - (\mu_\varepsilon)_B|\,dx \leq  [\mu]_{\mathrm{BMO}(B_{R_0+1}, \mu)}.
\]
As $B= B_r(x_0) \subset B_{R_0}$ is arbitrary, we conclude that
\[ 
[[\mu_\varepsilon]]_{\mathrm{BMO}(B_{R_0}, \mu_\varepsilon)} \le  [[\mu]]_{\mathrm{BMO}(B_{R_0+1}, \mu)}, \quad \forall \ \varepsilon \in (0,1).
\]
The proof of the proposition is completed.
\end{proof}
Next, we provide several results on the weighted mean oscillations of truncated weights. The following lemma is on the lower truncation of an $A_p$ weight.
\begin{lemma} \label{bmo-lower-truncated-weight} Let $\mu \in A_{p}$ with some $p \in (1, \infty)$, and let $T_k$ be the lower truncation operator defined in \eqref{truncation operators} with some $k\in (0,\infty)$.
Then, for each $R_0 >0$, we have
\[
[[T_k\circ \mu]]_{\textup{BMO}(B_{R_0}, T_k\circ \mu)} \leq 2 [[\mu]]_{\textup{BMO}(B_{R_0}, \mu)}.
\]
\end{lemma}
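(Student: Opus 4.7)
The plan is to exploit two elementary observations about $T_k$: it is a $1$-Lipschitz map on $\mathbb{R}$, and it is pointwise non-decreasing in the sense that $T_k\circ \mu \geq \mu$ everywhere. Writing $\tilde{\mu}:=T_k\circ \mu = \max\{k, \mu\}$, I will bound the oscillation of $\tilde\mu$ on a ball by the oscillation of $\mu$ on the same ball, and then use $\tilde\mu(B)\geq \mu(B)$ to compare the normalizing factors.

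More concretely, fix any ball $B = B_r(x_0)\subset B_{R_0}$. I will use the standard substitution inequality
\[
\int_B |\tilde{\mu}(x) - (\tilde{\mu})_B|\, dx \leq 2 \int_B |\tilde{\mu}(x) - c|\, dx
\]
valid for every constant $c\in \mathbb{R}$ (proved by adding and subtracting $c$ and averaging). Choosing $c = T_k\big((\mu)_B\big)$ and using the $1$-Lipschitz property $|T_k(a)-T_k(b)|\leq |a-b|$, I get $|\tilde{\mu}(x)-c| \leq |\mu(x)-(\mu)_B|$ pointwise. Therefore
\[
\int_B |\tilde{\mu}(x) - (\tilde{\mu})_B|\, dx \leq 2\int_B |\mu(x)-(\mu)_B|\, dx.
\]
On the other hand, $\tilde\mu\geq \mu$ pointwise gives $\tilde{\mu}(B)\geq \mu(B)$, so dividing by $\tilde{\mu}(B)$ and using the definition of $[[\mu]]_{\textup{BMO}(B_{R_0},\mu)}$ yields
\[
\frac{1}{\tilde{\mu}(B)}\int_B |\tilde{\mu}-(\tilde{\mu})_B|\, dx \leq \frac{2}{\mu(B)}\int_B |\mu - (\mu)_B|\, dx \leq 2\,[[\mu]]_{\textup{BMO}(B_{R_0},\mu)}.
\]
Taking the supremum over all balls $B\subset B_{R_0}$ gives the conclusion.

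I do not foresee a genuine obstacle here; the only thing to be careful about is that the comparison constant $c$ in the oscillation inequality is chosen to be $T_k((\mu)_B)$ (which is \emph{not} equal to $(\tilde{\mu})_B$ in general), so that the Lipschitz property of $T_k$ can be applied after factoring through $(\mu)_B$. The same template will be the natural starting point for the analogous statements about the upper truncation $T^k$ and the two-sided truncation $\bar\mu$, with small modifications to handle the fact that $T^k\circ \mu \leq \mu$ reverses the inequality on the denominators.
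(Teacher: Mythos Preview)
Your proof is correct and essentially identical to the paper's: both use the $1$-Lipschitz property of $T_k$ together with the constant $c=T_k\big((\mu)_B\big)$ to bound the oscillation, and both use $\tilde\mu(B)\geq\mu(B)$ for the denominator. The only cosmetic difference is that the paper justifies $\tilde\mu(B)\geq\mu(B)$ via Jensen's inequality applied to the convex function $T_k$, whereas you use the more direct pointwise inequality $\max\{k,\mu\}\geq\mu$.
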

\begin{proof} 
We observe that 
\[
T_k\circ \mu(x) = \max\{k, \mu(x) \}, \quad x \in \mathbb{R}^n.
\]
Let $B = B_r(x_0) \subset B_{R_0}$ with some $r>0$ and $x_0 \in \mathbb{R}^n$. Also, as $T_k$ is convex, it follows from Jensen's inequality that 
\[
T_k \left(\frac{1}{|B|} \int_{B} \mu (x)\, dx \right) \leq \frac{1}{|B|} \int_{B} T_k\circ \mu(x)\, dx.
\]
Hence,
\begin{equation} \label{measure-phi}
(T_k \circ \mu) (B) \geq  T_k \left(\frac{1}{|B|} \int_{B} \mu (x)\, dx \right)|B| \geq  \mu(B).
\end{equation}
Now, by using the triangle inequality and the fact that $T_k$ is Lipschitz with unit Lipschitz constant, we see that
\begin{align*}
\int_{B} |T_k\circ \mu(x) - (T_k\circ \mu)_{B}|\, dx  & \leq 2 \int_{B} |T_k\circ \mu(x) - T_k((\mu)_{B}) |\, dx \\
& \leq 2 \int_{B} |\mu(x) -(\mu)_{B} |\, dx.
\end{align*}
Then, it follows from the last estimate and \eqref{measure-phi} that
\begin{align*}
\frac{1}{(T_k\circ\mu)(B)}\int_{B} |T_k\circ \mu(x) - (T_k\circ \mu)_{B}|\, dx  &\leq   \frac{2}{\mu(B)} \int_{B} |\mu(x) -(\mu)_{B} |\, dx\\
& \leq 2 [\mu]_{\mathrm{BMO}(B_{R_0}, \mu)}.
\end{align*}
The assertion of the lemma then follows.
\end{proof}

To control the weighted mean oscillations of the upper truncation of a given weight, we need the following auxiliary result, which is of independent interest.
\begin{lemma} \label{osc-inver-wei} 
For $K_0 \geq 1$, there exists $N = N(n, K_0) >0$ such that for every $\mu \in A_2$ with $[\mu]_{A_2} \leq K_0$,  it holds that
\[
[[\beta]]_{\mathrm{BMO}(B_{R_0}, \beta)} \leq N  [[\mu]]_{{\mathrm{BMO}(B_{R_0}, \mu)}},\quad \forall\, R_0>0,
\]
where $\beta(x) = \mu(x)^{-1}$ for $x \in \mathbb{R}^n$.
\end{lemma}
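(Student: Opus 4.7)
The plan is to exploit the pointwise identity
\begin{equation*}
\beta(x) - \beta(y) = \frac{\mu(y) - \mu(x)}{\mu(x)\mu(y)},
\end{equation*}
which converts the oscillation of $\beta$ into that of $\mu$ weighted by $1/\mu$. Fix a ball $B = B_r(x_0) \subset B_{R_0}$. Starting from the telescoping identity $\beta(x) - (\beta)_B = \fint_B [\beta(x) - \beta(y)]\, dy$, applying the triangle inequality $|\mu(x) - \mu(y)| \leq |\mu(x) - (\mu)_B| + |\mu(y) - (\mu)_B|$, and using Fubini's theorem on $B \times B$, I expect to arrive at the clean estimate
\begin{equation*}
\int_B |\beta(x) - (\beta)_B|\, dx \leq 2\, \beta(B) \fint_B \frac{|\mu(x) - (\mu)_B|}{\mu(x)}\, dx,
\end{equation*}
since the cross factor $\int_B \mu(y)^{-1}\, dy$ is exactly $\beta(B)$. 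Dividing by $\beta(B)$ reduces the problem to bounding the right-hand average by $[[\mu]]_{\mathrm{BMO}(B_{R_0}, \mu)}$.

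To estimate that average, I will split $\mu^{-1} = \mu^{-1/2} \cdot \mu^{-1/2}$ and apply Cauchy--Schwarz:
\begin{equation*}
\fint_B \frac{|\mu - (\mu)_B|}{\mu}\, dx \leq \left( \fint_B \frac{|\mu - (\mu)_B|^2}{\mu}\, dx \right)^{1/2} \left( \fint_B \frac{dx}{\mu(x)} \right)^{1/2}.
\end{equation*}
Rewriting each average in terms of the $\mu$- and $\beta$-measures of $B$, the right-hand side equals $(\mu(B)\beta(B)/|B|^2)^{1/2}$ times the $\mathrm{BMO}_2(B_{R_0}, \mu)$-seminorm of $\mu$ evaluated at $B$. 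The $A_2$ hypothesis $[\mu]_{A_2} \leq K_0$ immediately bounds the first factor by $K_0^{1/2}$, and Lemma \ref{bmo-lemma} applied with $p = q = 2$ (admissible since $q = 2 = p/(p-1)$) converts the $\mathrm{BMO}_2$ seminorm into the desired unweighted BMO seminorm, giving
\begin{equation*}
[[\mu]]_{\mathrm{BMO}_2(B_{R_0}, \mu)} \leq \bar N_1^{-1} [[\mu]]_{\mathrm{BMO}(B_{R_0}, \mu)}
\end{equation*}
for a constant $\bar N_1 = \bar N_1(n, K_0) > 0$. Chaining these bounds produces $N = 2 K_0^{1/2}/\bar N_1$, and taking the supremum over $B \subset B_{R_0}$ completes the argument.

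The key structural observation---and the only real obstacle---is recognizing that the integrand $|\mu - (\mu)_B|/\mu$ is precisely the correct bilinear pairing: the factor $|\mu - (\mu)_B|$ feeds the $\mathrm{BMO}_2$-norm through Cauchy--Schwarz, while the factor $1/\mu$ is exactly the density for which the $A_2$ duality $\mu(B)\beta(B) \leq K_0 |B|^2$ delivers the required cancellation. The exponent $q = 2 = p/(p-1)$ available in Lemma \ref{bmo-lemma} at $p = 2$ is what makes the John--Nirenberg step fit; no other exponent would produce a clean $A_2$ pairing here.
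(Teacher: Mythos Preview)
Your proposal is correct and follows essentially the same route as the paper: both arguments reduce the oscillation of $\beta$ to the integral $\fint_B |\mu-(\mu)_B|\,\mu^{-1}\,dx$, apply Cauchy--Schwarz to pass to the $\mathrm{BMO}_2$ seminorm, and then invoke Lemma~\ref{bmo-lemma} with $p=q=2$. The only cosmetic differences are that the paper compares $\beta$ to the constant $(\mu)_B^{-1}$ (rather than to $(\beta)_B$ via the double average) and closes with the Jensen/H\"older direction $(\mu)_B^{-1}\le(\beta)_B$ instead of the $A_2$ bound $(\mu)_B(\beta)_B\le K_0$, which saves the explicit $K_0^{1/2}$ factor but is otherwise the same computation.
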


\begin{proof} It follows from Lemma \ref{bmo-lemma} that there exists $N = N(n,  K_0)>0$ such that
\begin{equation} \label{equiv-BMO}
\left( \frac{1}{\mu(B)} \int_{B}|\mu(x) - (\mu)_{B}|^{2} \mu(x)^{-1}\, dx \right)^{1/2} \leq N [[\mu]]_{{\mathrm{BMO}(B_{R_0}, \mu)}},
\end{equation}
for all ball $B \subset B_{R_0}$. We now fix a ball $B = B_r(x_0) \subset B_{R_0}$ with some $r>0$ and $x_0 \in B_{R_0}$. We recall that
\[
(\mu)_B = \fint_{B} \mu(x)\, dx.
\]
Then, we have 
\begin{align} \notag
& \fint_{B} |\beta(x) - (\mu)^{-1}_{B} |\, dx = (\mu)^{-1}_{B} \fint_{B} |\mu(x) - (\mu)_{B}| \mu(x)^{-1}\, dx \\ \notag
& \leq  (\mu)^{-1}_{B} \left( \fint_{B}| \mu(x) - (\mu)_{B}|^2 \mu(x)^{-1}\, dx\right)^{1/2} \left(\fint_{B} \mu(x)^{-1}\, dx \right)^{1/2} \\ \notag
& = (\mu)_{B}^{-1/2}  \left(\frac{1}{\mu(B)} \int_{B}| \mu(x) - (\mu)_{B}|^2 \mu(x)^{-1}\, dx \right)^{1/2} \left(\fint_{B} \beta(x)\, dx \right)^{1/2} \\ \label{beta-bmo-mu-12-31}
&\leq  N [[\mu]]_{{\mathrm{BMO}(B_{R_0}, \mu)}}  \left(\fint_{B}  \mu(x)\, dx \right)^{-1/2} \left(\fint_{B} \beta(x)\, dx \right)^{1/2},
\end{align}
where we used \eqref{equiv-BMO} in the last step. By H\"{o}lder's inequality, we note that
\[
1 = \fint_{B}\, dx \leq  \left(\fint_{B}  \mu(x) dx \right)^{1/2} \left(\fint_{B} \mu(x)^{-1} dx \right)^{1/2},
\]
and therefore
\[
 \left(\fint_{B}  \mu(x) dx \right)^{-1/2} \leq \left(\fint_{B} \mu(x)^{-1} dx \right)^{1/2} = \left(\fint_{B} \beta(x) dx \right)^{1/2}.
\]
From this and \eqref{beta-bmo-mu-12-31}, we obtain
\[
\fint_{B} |\beta(x) -  (\mu)_B^{-1}|\, dx \leq  N [[\mu]]_{{\mathrm{BMO}(B_{R_0}, \mu)}}   \fint_{B} \beta(x)\, dx.
\]
Then, it follows from the triangle inequality and the last estimate that
\begin{align*}
\fint_{B} |\beta(x) - (\beta)_B|\, dx
&\leq 2\fint_{B} |\beta(x) - (\mu)_B^{-1}|\, dx\\
&\leq N [[\mu]]_{{\mathrm{BMO}(B_{R_0}, \mu)}} \left(\fint_{B} \beta(x)\, dx \right).
\end{align*}
Consequently,
\[
\frac{1}{\beta(B)} \int_{B} |\beta(x) - (\beta)_B|\, dx \leq N [[\mu]]_{{\mathrm{BMO}(B_{R_0}, \mu)}}.
\]
As $B \subset B_{R_0}$ is arbitrary, the assertion of the lemma follows.
\end{proof}

The following lemma establishes the stability of the weighted mean oscillation of a weight under upper truncations.
\begin{lemma}  \label{mean-upp-truncate-wei} 
For $K_0 \geq 1$, there exists $N = N(n, K_0) >0$ such that for every $\mu \in A_2$ with $[\mu]_{A_2} \leq K_0$, it holds that
\[
[[T^k\circ \mu]]_{\mathrm{BMO}(B_{R_0},\, T^k\circ \mu)} \leq N  [\mu]_{\mathrm{BMO}(B_{R_0}, \mu)},
\]
for all $k>0$ and $R_0>0$, where $T^k$ be the upper truncation operator defined in \eqref{truncation operators}. 
\end{lemma}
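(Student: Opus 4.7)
The plan is to reduce the upper truncation to the lower truncation by passing to the reciprocal weight, so that Lemma \ref{bmo-lower-truncated-weight} and Lemma \ref{osc-inver-wei} can be combined. The elementary identity that drives the reduction is
\[
T^k(s) \;=\; \frac{1}{\,T_{1/k}(1/s)\,}, \qquad s>0,
\]
which gives, for $\mu \in A_2$,
\[
T^k\!\circ\!\mu \;=\; \frac{1}{\,T_{1/k}\!\circ\! \beta\,}, \qquad \text{where } \beta := 1/\mu.
\]

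First I would record that $\beta = 1/\mu \in A_2$ with $[\beta]_{A_2} = [\mu]_{A_2} \le K_0$, since $A_2$ is stable under reciprocals with the same norm. Applying Lemma \ref{osc-inver-wei} to $\mu$ then yields
\[
[[\beta]]_{\mathrm{BMO}(B_{R_0},\,\beta)} \;\le\; N(n,K_0)\, [[\mu]]_{\mathrm{BMO}(B_{R_0},\,\mu)}.
\]
Next, applying Lemma \ref{bmo-lower-truncated-weight} to $\beta$ with the lower truncation level $1/k$ gives
\[
[[T_{1/k}\!\circ\!\beta]]_{\mathrm{BMO}(B_{R_0},\,T_{1/k}\!\circ\!\beta)} \;\le\; 2\,[[\beta]]_{\mathrm{BMO}(B_{R_0},\,\beta)}.
\]

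To close the loop, set $\alpha := T_{1/k}\!\circ\!\beta$, so that $T^k\!\circ\!\mu = 1/\alpha$. By Proposition \ref{cut-omega} (the lower truncation estimate), $\alpha \in A_2$ with $[\alpha]_{A_2} \le [\beta]_{A_2} + 1 \le K_0 + 1$. Hence Lemma \ref{osc-inver-wei} applies to $\alpha$ (with constant depending only on $n$ and $K_0+1$, hence on $n$ and $K_0$) and yields
\[
[[1/\alpha]]_{\mathrm{BMO}(B_{R_0},\,1/\alpha)} \;\le\; N(n,K_0)\,[[\alpha]]_{\mathrm{BMO}(B_{R_0},\,\alpha)}.
\]
Concatenating the three inequalities gives the claimed bound on $[[T^k\!\circ\!\mu]]_{\mathrm{BMO}(B_{R_0},\,T^k\!\circ\!\mu)}$, with a constant $N=N(n,K_0)$ independent of $k>0$ and $R_0>0$.

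There is no real obstacle beyond keeping track of the $A_2$ norms at each step: the main point is that every intermediate weight ($\beta$, $T_{1/k}\!\circ\!\beta$) remains in $A_2$ with norm controlled by $K_0$ alone, so that the constants in the two invocations of Lemma \ref{osc-inver-wei} depend only on $n$ and $K_0$. The identity $T^k(s)=1/T_{1/k}(1/s)$ is the key structural ingredient; once it is noticed, the proof is just a chain of previously established estimates.
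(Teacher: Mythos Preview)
Your proof is correct and follows essentially the same approach as the paper: both arguments pass to the reciprocal weight $\beta=1/\mu$, use the identity $T^k\circ\mu = (T_{1/k}\circ\beta)^{-1}$, and then chain Lemma~\ref{osc-inver-wei}, Lemma~\ref{bmo-lower-truncated-weight}, Proposition~\ref{cut-omega}, and Lemma~\ref{osc-inver-wei} again in the same order. The only cosmetic difference is that the paper bounds $[T_{1/k}\circ\beta]_{A_2}\le 2K_0$ while you write $K_0+1$; both are fine.
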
 
\begin{proof} Let $\beta(x) = \mu(x)^{-1}$ for $x \in \mathbb{R}^n$. Since $\mu\in A_2$, it follows directly from Definition \ref{A-p-def} that $\beta \in A_2$ with $[\beta]_{A_2} = [\mu]_{A_2}$. Let $T_{k^{-1}}$ be the lower truncation operator defined in \eqref{truncation operators}, corresponding to the level $k^{-1}$. From Proposition \ref{cut-omega}, we see that $T_{k^{-1}}\circ \beta \in A_2$, and
\[
 [T_{k^{-1}}\circ \beta]_{A_2} \leq [\beta]_{A_2}  + 1 \leq 2K_0.
\]
As a result, it follows from Lemma \ref{bmo-lower-truncated-weight} and Lemma \ref{osc-inver-wei} that
\[
[[T_{k^{-1}}\circ \beta]]_{\mathrm{BMO}(B_{R_0}, T_{k^{-1}}\circ \beta)} \leq  2[[\beta]]_{\mathrm{BMO}(B_{R_0}, \beta)} \leq N  [[\mu]]_{\mathrm{BMO}(B_{R_0}, \mu)},
\]
for $N = N(n, K_0)>0$. Now, note that $T^k\circ \mu(x) = [T_{k^{-1}}\circ\beta (x)]^{-1}$. Then, by applying Lemma \ref{osc-inver-wei}, and using the last estimate, we infer that
\[
[[T^k\circ \mu]]_{\mathrm{BMO}(B_{R_0}, T^{k}\circ \mu)} \leq N [[T_{k^{-1}}\circ\beta]]_{\mathrm{BMO}(B_{R_0}, T_{k^{-1}}\circ\beta)} \leq  N  [[\mu]]_{\mathrm{BMO}(B_{R_0}, \mu)},
\]
for $N = N(n, K_0)>0$. The lemma is proved.
\end{proof}

As a direct corollary of Proposition \ref{cut-omega}, Lemma \ref{bmo-lower-truncated-weight}, and Lemma \ref{mean-upp-truncate-wei}, we obtain the following result, which will be used in the proof of Theorem \ref{Lin thm}.

\begin{proposition} \label{stability-cut-omega} For every $K_0 \geq 1$, there exists $\bar{N}_0 = \bar{N}_0 (n, K_0)>0$ such that the following assertion holds. For each $\mu \in A_{2}$ satisfying $[\mu]_{A_2} \leq K_0$, let $0< s < \tau <\infty$, and define
\[
\bar \mu(x) = \left\{\begin{array}{ll}
\mu(x) & \quad \text{if} \quad s \leq \mu(x) \leq \tau,\\ \smallskip
s & \quad \text{if} \quad \mu(x) < s, \\ \smallskip
\tau & \quad \text{if} \quad \mu(x) > \tau,
\end{array} \right.
\]
for $x \in \mathbb{R}^n$. Then, for each $R_0>0$,
\[
[[\bar \mu]]_{\textup{BMO}(B_{R_0}, \bar \mu)} \leq  \bar{N}_0 [[\mu]]_{\textup{BMO}(B_{R_0}, \mu)}.
\]
\end{proposition}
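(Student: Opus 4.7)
The plan is to recognize that the doubly-truncated weight $\bar\mu$ factorizes as the composition of an upper truncation followed by a lower truncation, namely $\bar\mu = T_s \circ (T^\tau \circ \mu)$. Indeed, since $0<s<\tau$, if $\mu(x)>\tau$ then $T^\tau(\mu(x)) = \tau > s$ so $T_s$ leaves it unchanged; if $\mu(x) < s$, then $T^\tau$ leaves it unchanged and $T_s$ lifts it to $s$; and the middle regime is fixed by both operators. Hence $\bar\mu$ is exactly the composition, and the proof reduces to chaining the three preceding results in the correct order.

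First, I would apply Lemma \ref{mean-upp-truncate-wei} to the weight $\mu$ with upper-truncation level $\tau$, yielding
\[
[[T^\tau\circ \mu]]_{\textup{BMO}(B_{R_0},\, T^\tau\circ \mu)} \leq N(n,K_0)\, [[\mu]]_{\textup{BMO}(B_{R_0},\, \mu)}.
\]
Simultaneously, Proposition \ref{cut-omega} specialized to $p=2$ gives $[T^\tau\circ \mu]_{A_2} \leq [\mu]_{A_2} + 1 \leq K_0+1$, so the upper-truncated weight still belongs to $A_2$ with a constant depending only on $K_0$. This step is essential because the next lemma only requires the input to be an $A_p$ weight, but the quantitative control on the Muckenhoupt characteristic is what keeps the downstream constants uniform in $s,\tau$.

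Second, I would apply Lemma \ref{bmo-lower-truncated-weight} with $\mu$ replaced by $T^\tau\circ \mu$ and lower-truncation level $s$. This immediately gives
\[
[[\bar\mu]]_{\textup{BMO}(B_{R_0},\, \bar\mu)} \;=\; [[T_s \circ (T^\tau\circ \mu)]]_{\textup{BMO}(B_{R_0},\, T_s\circ(T^\tau\circ\mu))} \;\leq\; 2\,[[T^\tau\circ \mu]]_{\textup{BMO}(B_{R_0},\, T^\tau\circ \mu)}.
\]
Chaining this with the estimate from the first step produces the proposition with $\bar N_0 = 2N(n,K_0)$.

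No real obstacle is expected, since this is purely a bookkeeping composition of previously established lemmas. The one subtle point to check is that the constant $N(n,K_0)$ from Lemma \ref{mean-upp-truncate-wei} depends solely on $n$ and $K_0$, not on the truncation level $\tau$; this is exactly the content of that lemma, and it relies on the fact that $[\mu]_{A_2}\leq K_0$ is inherited with only a harmless additive perturbation under truncation, as guaranteed by Proposition \ref{cut-omega}. Conversely, the factor of $2$ in Lemma \ref{bmo-lower-truncated-weight} is absolute and independent of $s$, so no degeneration occurs as $s\to 0^+$ or $\tau\to\infty$.
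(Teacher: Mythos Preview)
Your proposal is correct and matches the paper's approach exactly: the paper presents Proposition~\ref{stability-cut-omega} as a direct corollary of Proposition~\ref{cut-omega}, Lemma~\ref{bmo-lower-truncated-weight}, and Lemma~\ref{mean-upp-truncate-wei}, and your chaining $\bar\mu = T_s\circ(T^\tau\circ\mu)$ followed by the two BMO estimates is precisely the intended argument. The only cosmetic difference is that you explicitly track the intermediate $A_2$ constant $K_0+1$, which the paper leaves implicit.
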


\subsection{Parabolic Aleksandrov-Bakelman-Pucci estimates} 
Recall that the weighted Lebesgue space $L^p(U, \mu)$ with $p \in [1, \infty)$ is defined in \eqref{weight-Lp-space}. The following notation for the parabolic weighted Sobolev space is also used in the paper.
\begin{definition}\label{solution space} Let $U\subset \mathbb{R}^{n+1}$, and $\omega\in A_{1+\frac{1}{n}}$. The weighted Sobolev space $\mathcal{W}^{2,1}_{n+1}(U,\, \omega)$ is defined by
\[
\mathcal{W}^{2,1}_{n+1}(U,\, \omega)
= \Big\{ u\in L^1_{\textup{loc}}(U):\ 
D^2 u \in L^{n+1}(U,\, \omega),\
u_t\in L^{n+1}(U,\, \omega^{-n})
\Big\}.
\]
\end{definition}

The parabolic Aleksandrov-Bakelman-Pucci maximum principle for strong solutions of linear parabolic equations was first proved by N. V. Krylov in \cite{Krylov-1976}, and independently by Tso \cite{Tso}. The following weighted version of the parabolic Aleksandrov-Bakelman-Pucci maximum principle for solutions to degenerate equations is a modification of the classical one in \cite{Tso}. For this purpose, we recall that the \emph{upper contact set} in $U$ of a function $u \in C(U)$ is defined by
\begin{align} \notag 
\Gamma^+(u)=\Big \{(x,t)\in U:\ &  
\exists \ l\in \R^n \textup{ such that }\ u(y,s)\leq u(x,t)+\langle l,\, y-x\rangle, \ \\ \label{upper contact}
    &\text{for all }\ (y,s) \in U, \ s \leq t \Big\}.
\end{align}
Also, for a given function $f$ defined on a domain $U \subset \mathbb{R}^{n+1}$, we denote $f^+(x,t) = \max\{f(x,t), 0\}$ for $(x,t) \in U$.
\begin{theorem}[Parabolic ABP estimate]\label{ABP}  For every $\nu\in (0,1)$, there exists a constant $N_0=N_0(n,\nu)>0$ such that the following assertion holds. Let $U \subset \mathbb{R}^{n+1}$ be a nonempty, open, and bounded set with parabolic boundary $\partial'U$. Suppose that there are $r>0$ and $y\in \mathbb{R}^n$ such that 
\[
U \subset B_r(y)\times \mathbb{R}.
\]
Suppose that $\omega\in{A_{1+\frac{1}{n}}}$, and that \eqref{elliptic} holds in $U$.
Then, for  every $u\in \mathcal{W}^{2,1}_{n+1}(U,\, \omega)\cap C(\overline{U})$, it holds that
\begin{equation} \label{ABP-est-11-16}
\sup_{U}u^+\leq \sup_{\partial' U}u^++N_0r^{\frac{n}{n+1}}\|(\mathcal{L}u)^+\|_{L^{n+1}(\Gamma^+(u),\, \omega^{-n})}.
\end{equation}
\end{theorem}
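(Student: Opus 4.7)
The plan is to reduce the estimate \eqref{ABP-est-11-16} to a form of the classical parabolic Aleksandrov-Bakelman-Pucci estimate (in the spirit of Krylov and Tso) applied to the symmetric positive semidefinite matrix $\tilde a_{ij}(x,t) := \omega(x) a_{ij}(x,t)$, and then to exploit the factorization $\det(\tilde a) = \omega(x)^n \det(a_{ij}(x,t))$ to produce the weighted right-hand side. The key point is that the ABP argument, at its core, does not need uniform ellipticity: pointwise positive definiteness of the coefficient matrix suffices, with $\det(\tilde a)$ naturally appearing in the denominator.

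First I would verify the baseline regularity of $u$. Since $\omega\in A_{1+\frac{1}{n}}$, both $\omega$ and $\omega^{-n}$ are locally integrable, and moreover $\omega>0$ almost everywhere (otherwise $\omega^{-n}$ would not be locally integrable). A direct H\"older inequality applied to the pair $(|D^2u|^{n+1}\omega,\omega^{-1/n})$ and to $(|u_t|^{n+1}\omega^{-n},\omega)$ shows that $D^2u$ and $u_t$ lie in $L^1_{\textup{loc}}(U)$, so $u$ belongs to the classical local parabolic Sobolev space. In particular, the upper contact set $\Gamma^+(u)$ defined in \eqref{upper contact} is well defined and the parabolic normal map used in the ABP argument makes sense. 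In addition, $\tilde a_{ij}(x,t)$ is symmetric and positive definite almost everywhere in $U$.

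Next, I would invoke (or briefly re-derive) the coefficient-dependent form of the parabolic ABP estimate applied to $u$ viewed as a solution of $\partial_t u - \tilde a_{ij}D_{ij}u = \mathcal{L}u$ in $U$. The local ingredient is the arithmetic–geometric mean inequality
\[
-\mathrm{tr}(\tilde a\, D^2 u)\;\geq\; n\big[\det(\tilde a)\det(-D^2u)\big]^{1/n}\qquad \text{on }\Gamma^+(u),
\]
combined with a measure-theoretic normal map/area formula estimate. This yields, without any uniform ellipticity hypothesis on $\tilde a$,
\[
\sup_{U}u^+\;\leq\;\sup_{\partial' U}u^+ + N(n)\,r^{\frac{n}{n+1}}\left(\int_{\Gamma^+(u)}\frac{[(\mathcal{L}u)^+]^{n+1}}{\det(\tilde a)}\,dxdt\right)^{\frac{1}{n+1}}.
\]
Then using \eqref{elliptic}, whose eigenvalues of $(a_{ij})$ lie in $[\nu,\nu^{-1}]$, I obtain $\det(\tilde a)=\omega^n\det(a_{ij})\geq\nu^n\omega^n$. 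Substituting and taking the $(n+1)$-th root delivers \eqref{ABP-est-11-16} with $N_0=N(n)\nu^{-\frac{n}{n+1}}$.

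The main obstacle is step two: classical references typically state the parabolic ABP estimate under uniform ellipticity and boundedness, while what is actually used in the proof is only pointwise positive definiteness together with a $\det$-dependent denominator. If a direct citation is not available, I would handle it by a limiting argument — replace $\tilde a$ by $\tilde a+\varepsilon I$ (which is uniformly elliptic with ellipticity constants depending on $\varepsilon$), apply the standard ABP estimate to the perturbed problem, and pass $\varepsilon\to 0^+$ using Fatou/monotone convergence on the contact-set integral together with the fact that the normal map of $u$ is unchanged. A secondary, minor point is handling the measure-zero set $\{\omega=0\}$, which does not contribute to the contact-set integral thanks to $\omega^{-n}\in L^1_{\textup{loc}}$.
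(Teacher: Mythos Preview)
Your approach is essentially the same as the paper's: the paper simply observes that $\det(\omega(x)a_{ij}(x,t))\approx \omega(x)^n$, checks that $\|\mathcal{L}u\|_{L^{n+1}(U,\omega^{-n})}<\infty$ so the right-hand side is well defined, and then cites the classical parabolic ABP estimates of Krylov and Tso. You have supplied more detail than the paper does (the Sobolev-space bookkeeping and the $\varepsilon$-regularization to bypass the uniform-ellipticity hypothesis in standard references), but the core reduction is identical.
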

\noindent
We observe that  
\[
\text{det}(\omega(x) a_{ij}(x))\approx \omega(x)^n.
\]
In addition, due to the fact that $u\in \mathcal{W}^{2,1}_{n+1}(U, \omega)$ and \eqref{elliptic}, it follows that 
\[ 
\|\mathcal{L} u\|_{L^{n+1}(U, \omega^{-n})} \leq \| u_t\|_{L^{n+1}(U, \omega^{-n})} + N(n, \nu) \|D^2 u\|_{L^{n+1}(U, \omega)} < \infty.
\] 
Hence, the right hand side in \eqref{ABP-est-11-16} is well-defined. Therefore, Theorem \ref{ABP} follows from the parabolic Aleksandrov-Bakelman-Pucci estimate proved in \cite{Krylov-1976, Tso}. 

\subsection{Krylov-Safonov covering lemma} Given a measurable set $\Gamma\subset C_{r,\, \omega}$ and $q \in (0,1)$, let
\begin{equation}\label{cylinders set}
\mathcal{A} =\Big\{C = C_{\rho,\, \omega}(X)\subset C_{r,\, \omega}:\ \omega(C \cap \Gamma) \geq q \omega(C) \Big\}.
\end{equation}
In addition, for given constants $\eta \in (0,1)$ and $l>1$,  corresponding to each $C = C_{\rho,\, \omega}(X) \in \mathcal{A}$ with some $X = (x, s) \in \mathbb{R}^n \times \mathbb{R}$ and $\rho>0$, we denote
\begin{equation}\label{hat cylinder}
\begin{aligned}
\tilde {C} &= \tilde{C}_{\rho,\, \omega}(X) =B_{\eta \rho}(x)\times \big(s-{\rho}^2 (\omega^{-n})_{B_{\rho}(x)}^{1/n},\, s\big), \quad \text{and}\\
\hat{C} &= \hat{C}_{\rho,\, \omega}(X) =B_{\eta\rho}(x)\times \big(s +{\rho}^2 (\omega^{-n})_{B_{\rho}(x)}^{1/n},\, s + l {\rho}^2 (\omega^{-n})_{B_{\rho}(x)}^{1/n}\big).
\end{aligned}
\end{equation}
We then define
\begin{equation}\label{set E}
\tilde{E}(q, \eta) =\bigcup_{C \in \A } \tilde{C},\quad\text{and}\quad
\hat{E}(q, \eta, l) =\bigcup_{C \in \A } \hat{C}.
\end{equation}
We note that $\mathcal{A}$ depends on $\Gamma$, $q$, and $\omega$. However, for brevity, we do not explicitly write these factors. Similarly,  $\tilde E$ and $\hat E$ also depend on $\Gamma$, $q$, and $\omega$. Because each $C\in \A$ is open,  and its corresponding $\tilde C$ and $\hat C$ are open, the sets $\tilde E$ and $\hat E$ are open and measurable.

\smallskip
We now state the following covering lemma, whose unweighted case (i.e., $\omega=1$) was proved by N. V. Krylov and M. V. Safonov \cite{KrySa-1, KrySa-2}, see also \cite[Lemmas 5.1--5.2]{FeSa} and \cite[Lemma 9.2.6, p.~173]{Krylov-book}. 

\begin{lemma}[Weighted Krylov-Safonov covering lemma]\label{covering-1}  
Let $K_0\in[1,\infty)$, and let $\omega\in A_{1+\frac{1}{n}}$ satisfy $[\omega]_{A_{1+\frac{1}{n}}}\leq K_0$. Let $\Gamma\subset C_{r,\, \omega}$, and $\tilde E$ and $\hat E$ be defined as in \eqref{set E} with some $q\in (0,1)$, $\eta\in (0,1)$, and $l\in (1,\infty)$. Then, it holds that
\begin{equation}\label{conclusion covering}
\omega(\Gamma\setminus \tilde E) =0,\quad  \omega(\tilde E) \geq \xi_0 \omega(\Gamma),\quad \text{and}\quad \omega(\hat{E})\geq \xi_1 \omega(\tilde E),
\end{equation}
where 
\begin{equation}\label{hat-E-E}
\begin{split}
& \xi_0=\xi_0(n, K_0, q, \eta)=1+\frac{1-q-N(1-\eta^n)^{\lambda_0}}{3^{n+2}K_0}, \ \text{and}\\
& \xi_1=\xi_1(l) = \frac{l-1}{l+1}>0,
\end{split}
\end{equation}
in which $N=N(n,K_0)$ and $\lambda_0=\lambda_0(n, K_0)$ are positive constants.
\end{lemma}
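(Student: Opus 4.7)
The plan is to adapt the classical Krylov--Safonov covering argument to the weighted parabolic setting, exploiting the doubling of $\omega\in A_{1+\frac{1}{n}}$ together with the measure identity \eqref{cylinder measure}. I would handle the three claims in \eqref{conclusion covering} in turn.

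For $\omega(\Gamma\setminus\tilde E)=0$, I would appeal to the Lebesgue differentiation theorem for the doubling measure $d\omega\otimes dt$ on $\R^{n+1}$. The required doubling of the family $\{C_{\rho,\omega}(X)\}$ follows from \eqref{cylinder measure} together with $\omega^{-n}\in A_{n+1}$ (cf.~\eqref{omega-n-wei-12-22}), which keeps $(\omega^{-n})_{B_\rho}^{1/n}$ quasi-invariant under bounded dilations of $\rho$. For $\omega$-a.e.\ $X_0=(x_0,t_0)\in\Gamma$, picking small $\rho$ and a forward time-shifted center $X=(x_0,t_0+\tau)$ with $\tau\in\bigl(0,\rho^{2}(\omega^{-n})_{B_\rho(x_0)}^{1/n}\bigr)$ ensures $X_0\in\tilde C_{\rho,\omega}(X)$ while $C_{\rho,\omega}(X)\in\mathcal{A}$ by the density property, placing $X_0$ in $\tilde E$.

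For $\omega(\tilde E)\ge\xi_0\omega(\Gamma)$, I would extract by a Vitali-type selection a disjoint countable subfamily $\{C^{(i)}\}=\{C_{\rho_i,\omega}(X^{(i)})\}\subset\mathcal{A}$ whose threefold dilates $C_{3\rho_i,\omega}(X^{(i)})$ cover $\bigcup_{C\in\mathcal{A}}C$, and hence cover $\Gamma$ up to an $\omega$-null set by part (a). The doubling bound $\omega(C_{3\rho_i,\omega}(X^{(i)}))\le 3^{n+2}K_0\,\omega(C^{(i)})$ then yields $\sum_i\omega(C^{(i)})\ge\omega(\Gamma)/(3^{n+2}K_0)$. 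Membership in $\mathcal{A}$ gives $\omega(C^{(i)}\setminus\Gamma)\le(1-q)\omega(C^{(i)})$, while the self-improving reverse Hölder inequality for $A_{1+\frac{1}{n}}$-weights produces $\lambda_0=\lambda_0(n,K_0)>0$ and $N=N(n,K_0)>0$ such that $\omega(C^{(i)}\setminus\tilde C^{(i)})\le N(1-\eta^n)^{\lambda_0}\omega(C^{(i)})$, since the relative Lebesgue measure of the annular spatial region $B_{\rho_i}\setminus B_{\eta\rho_i}$ equals $1-\eta^n$ and $C^{(i)}$, $\tilde C^{(i)}$ share a common time interval. Summing,
\[
\omega(\Gamma\setminus\tilde E)\le\bigl(1-q+N(1-\eta^n)^{\lambda_0}\bigr)\sum_i\omega(C^{(i)}),
\]
and combining with the Vitali estimate delivers $\omega(\tilde E)\ge\omega(\Gamma)-\omega(\Gamma\setminus\tilde E)\ge\xi_0\omega(\Gamma)$ with $\xi_0$ as in \eqref{hat-E-E}.

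For $\omega(\hat E)\ge\xi_1\omega(\tilde E)$, I would work slice-wise via Fubini. On each spatial column $\{x\}\times\R$, a time-Vitali selection on the backward intervals composing $\tilde E\cap(\{x\}\times\R)$ yields a subfamily whose forward-shifted counterparts---those constituting $\hat E\cap(\{x\}\times\R)$---occupy a fraction $(l-1)/(l+1)$ of the combined span backward--gap--forward, the three pieces having length proportions $1:1:(l-1)$ by the definitions of $\tilde C$ and $\hat C$; integration against $\omega(x)\,dx$ then yields the conclusion. The main obstacle is part (b): because the time-extent of $C_{\rho,\omega}(X)$ depends on the spatial average $(\omega^{-n})_{B_\rho(X)}^{1/n}$, the standard Vitali geometry is not immediate and one must show quantitatively that if $C_{\rho,\omega}(X)\cap C_{\sigma,\omega}(Y)\ne\emptyset$ with $\sigma\le\rho$, then $C_{\sigma,\omega}(Y)\subset C_{3\rho,\omega}(X)$, which reduces to a uniform comparison of $(\omega^{-n})_{B_\rho(X)}^{1/n}$ and $(\omega^{-n})_{B_{3\rho}(X)}^{1/n}$ via the $A_{1+\frac{1}{n}}$ condition; the reverse Hölder bound on annular spatial regions is the source of both the exponent $\lambda_0$ and the constant $N$ that appear in $\xi_0$.
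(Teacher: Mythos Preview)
Your approach to the first and third assertions is sound and essentially matches the paper. The real problem is the second assertion, where the argument you sketch cannot yield the stated $\xi_0$.

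Observe that $\xi_0=1+\frac{1-q-N(1-\eta^n)^{\lambda_0}}{3^{n+2}K_0}$ is meant to exceed $1$ (this is exactly how it is used downstream, cf.\ \eqref{def xi_0}). But your final step reads $\omega(\tilde E)\ge\omega(\Gamma)-\omega(\Gamma\setminus\tilde E)$, and since $\omega(\Gamma)-\omega(\Gamma\setminus\tilde E)=\omega(\Gamma\cap\tilde E)\le\omega(\Gamma)$, this can never produce a factor larger than $1$. In fact part (a) already gives $\omega(\Gamma\setminus\tilde E)=0$, so the displayed bound you derive on $\omega(\Gamma\setminus\tilde E)$ is vacuous. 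The correct mechanism is the reverse containment: once $\Gamma\subset\tilde E$ up to a null set, one must \emph{lower} bound $\omega(\tilde E\setminus\Gamma)$. The extra mass comes from the portion of each selected cylinder $C^{(i)}$ that lies in $\tilde C^{(i)}$ but outside $\Gamma$, namely
\[
\omega(\tilde E\setminus\Gamma)\ \ge\ \sum_i\Big[\omega\big(C^{(i)}\setminus\Gamma\big)-\omega\big(C^{(i)}\setminus\tilde C^{(i)}\big)\Big].
\]
For this to be useful you need $\omega(C^{(i)}\setminus\Gamma)\ge(1-q)\,\omega(C^{(i)})$, i.e.\ the \emph{opposite} inequality to the one you wrote. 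Membership in $\mathcal{A}$ only gives $\omega(C^{(i)}\cap\Gamma)\ge q\,\omega(C^{(i)})$, hence $\omega(C^{(i)}\setminus\Gamma)\le(1-q)\,\omega(C^{(i)})$, which goes the wrong way. The paper resolves this by first passing from $\mathcal{A}$ to the subfamily $\mathcal{A}_0=\{C:\omega(C\cap\Gamma)=q\,\omega(C)\}$ via a continuity/dilation argument (each $C\in\mathcal{A}$ sits inside some $C'\in\mathcal{A}_0$), runs the Vitali selection in $\mathcal{A}_0$, and thereby secures the equality $\omega(C^{(i)}\setminus\Gamma)=(1-q)\,\omega(C^{(i)})$. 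Combined with the reverse H\"older annular estimate and the Vitali bound $\sum_i\omega(C^{(i)})\ge\omega(\Gamma)/(3^{n+2}K_0)$, this gives $\omega(\tilde E)=\omega(\Gamma)+\omega(\tilde E\setminus\Gamma)\ge\xi_0\,\omega(\Gamma)$.

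A secondary point: the engulfing property you flag at the end is genuinely delicate because the time-height of $C_{\rho,\omega}(X)$ depends on the spatial center, and the paper does not use $C_{3\rho_i,\omega}(X^{(i)})$ as the covering family. Instead it takes an asymmetric enlargement $Q^i=B_{3R_i}(y^i)\times\big(s^i-6R_i^2(\omega^{-n})^{1/n}_{B_{3R_i}(y^i)},\,s^i+3R_i^2(\omega^{-n})^{1/n}_{B_{3R_i}(y^i)}\big)$, verifies directly that any $C\in\mathcal{A}_0$ intersecting $C^{k_0}$ with smaller radius is contained in $Q^{k_0}$, and checks $\omega(Q^i)\le 3^{n+2}K_0\,\omega(C^i)$ from \eqref{cylinder measure}. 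Your proposed route via $C_{3\rho_i,\omega}$ would require an additional comparison of time-heights across different centers that you have not supplied.
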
 
\noindent
The proof of this lemma is presented in Appendix \ref{Appendix-A}.

\section{Prop-up lemmas} \label{prop-up-section}
In this section, we establish two prop-up lemmas that are needed in the proofs of the main results. We begin with a prop-up lemma on slant cylinders.  For this purpose, let us introduce several notation. For $\rho>0$, $X_0=(x_0, t_0)$, $Y_0=(y_0, s_0)\in \mathbb{R}^{n}\times \mathbb{R}$ with $s_0>t_0$, we define the parabolic slant cylinder $V_{\rho}(X_0, Y_0)$ as
\begin{equation}\label{def-slant}
V_{\rho}(X_0, Y_0) =\left\{(x, t) \in \mathbb{R}^{n} \times \mathbb{R}: \big| x- \tfrac{y_0-x_0}{s_0-t_0}(t-t_0)-x_0\big| < \rho, \ t_0 < t< s_0 \right\}.
\end{equation}
Then, for a given number $K\in [1,\infty)$, we say $V_{\rho}(X_0, Y_0)$ satisfies a \emph{$K$--slant condition} if there exist $r>0$ and $y \in \mathbb{R}^n$ so that
\begin{equation}  \label{slant-cyl}
\left\{
\begin{split}
& \frac{r}{2K}\leq \rho\leq r, \quad V_{\rho}(X_0, Y_0)\subset B_r(y)\times \R, \quad \text{and}\\
& K^{-1} \rho(\omega^{-n})_{B_r(y)}^{1/n}|y_0-x_0| \leq s_0-t_0 \leq K {\rho}^2(\omega^{-n})_{B_r(y)}^{1/n}.
\end{split} \right.
\end{equation}
Here in \eqref{slant-cyl}, for the reader's convenience, we recall that
\[
(\omega^{-n})_{B_r(y)} = \fint_{B_r(y)} \omega(x)^{-n}dx.
\]
In addition, we write the parabolic boundary of $V_{\rho}(X_0,Y_0)$ as
\begin{equation}\label{bdy of slant}
\partial' V_{\rho}(X_0, Y_0) = \big(\overline{B}_{\rho}(x_0) \times \{t_0\}\big) \cup S
\end{equation}
with \[
S = \big\{(x, t): \big| x- \tfrac{y_0-x_0}{s_0-t_0}(t-t_0)-x_0\big| =\rho, \ t_0 < t < s_0\big\}.
\]
In addition, for the sake of brevity, with a given $\nu \in (0,1)$ and $K_0 \geq 1$, let us denote the set containing parabolic operators $\mathcal{L}$ defined in \eqref{parabolic operator} as
\begin{align} \notag 
\mathbb{L}_{\nu} (K_0)  =\Big\{\mathcal{L}: \ \mathcal{L} u = & u_t -\omega(x) a_{ij}(x,t) D_{ij}u, \ \text{where}  \  \omega \in A_{1+\frac{1}{n}},\  [\omega]_{A_{1+\frac{1}{n}}} \leq K_0,\\ \label{L-non-smooth}
&\text{and} \ (a_{ij}) \ \text{is symmetric and it satisfies} \ \eqref{elliptic} \Big\}.
\end{align}
Due to Lemma \ref{bmo-lemma}, and for the sake of brevity, with $r>0$ and $y \in \mathbb{R}^n$, we denote
\begin{equation}\label{BMO on ball}
[[\omega]]_{B_r(y), \omega}= \left(\frac{1}{\omega(B_r(y))}\int_{B_r(y)}|\omega(x)-(\omega)_{B_r(y)}|^{n+1} \omega(x)^{-n}\,dx \right)^{\frac{1}{n+1}}.
\end{equation}

Now, we give a lemma for non-negative supersolutions of $\mathcal{L}u\geq 0$ on slant cylinders, which is a modification of \cite[Lemma 4.3]{Cho-Fang-Phan}.

\begin{lemma}[Prop-up lemma on slant cylinders]\label{baby prop-up}  Let $\nu\in (0,1)$, $K_0\in [1,\infty)$, and $K\in [1,\infty)$. There exist constants $\delta_1=\delta_1(n,\nu, K_0, K)\in (0,1)$  and $\beta=\beta(n,\nu, K_0, K)\in (0,1)$ such that the following statement holds. Suppose that $V_{\rho}(X_0, Y_0)\subset B_r(y)\times \R$ with $X_0=(x_0, t_0),\, Y_0=(y_0, s_0)\in \R^{n}\times \R$ satisfies condition \eqref{slant-cyl}. Suppose also that $\mathcal{L} \in \mathbb{L}_{\nu}(K_0)$ with corresponding weight $\omega$ satisfying
\[
 [[\omega]]_{B_r(y),\, \omega}\leq \delta_1.
\]
Then, for all non-negative $u\in \mathcal{W}_{n+1}^{2,1}(V_{\rho}(X_0, Y_0),\, \omega)\cap C(\overline {V}_{\rho}(X_0, Y_0))$ satisfying
\[
\mathcal{L}u\geq 0 \quad \text{in }\ V_{\rho}(X_0, Y_0),
\]
it holds that 
\begin{equation}\label{slant conclusion}
\inf_{B_{\rho/2}(y_0)}u(\cdot, s_0)\geq \beta \inf_{B_{\rho}(x_0)}u(\cdot, t_0).
\end{equation}
\end{lemma}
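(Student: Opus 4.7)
The plan is to follow a standard barrier-plus-ABP scheme, with two ingredients tailored to our setting: (i) freezing the weight $\omega$ at the average $(\omega)_{B_r(y)}$ and treating the deviation as a perturbation controlled by the BMO smallness assumption, and (ii) accommodating the slant by letting the barrier follow the drift path $\zeta(t):=x_0+\tfrac{y_0-x_0}{s_0-t_0}(t-t_0)$. After reducing to the case $\inf_{B_\rho(x_0)}u(\cdot,t_0)=1$ (by replacing $u$ with $(u+\varepsilon)/(\inf+\varepsilon)$ and sending $\varepsilon\to 0^+$ at the end), the task is to construct a barrier $v$ on $V_\rho(X_0,Y_0)$ that is a subsolution of the frozen operator $\mathcal{L}_0 v := v_t-(\omega)_{B_r(y)}a_{ij}(x,t)D_{ij}v$, that lies below $u$ on the parabolic boundary $\partial'V_\rho(X_0,Y_0)$ (cf.\ \eqref{bdy of slant}), and that satisfies $v\ge\beta$ on $B_{\rho/2}(y_0)\times\{s_0\}$ for some $\beta=\beta(n,\nu,K_0,K)>0$.

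\textbf{Barrier construction and ABP.} I will look for $v$ in the form $v(x,t)=\psi\bigl(|x-\zeta(t)|^2/\rho^2,\ (\omega)_{B_r(y)}(t-t_0)/\rho^2\bigr)$, the second variable being the natural rescaled time that turns $\mathcal{L}_0$ into a heat-type operator. By the slant condition \eqref{slant-cyl} and \eqref{A-1-1/n} we have $(\omega)_{B_r(y)}(s_0-t_0)/\rho^{2}\le KK_0$, so the rescaled time range is bounded in terms of $n,\nu,K_0,K$. A Krylov--Safonov style profile (e.g.\ a high-power radial cutoff multiplied by a decaying exponential in the rescaled time) can be engineered so that $\mathcal{L}_0 v\le 0$, $v\le 0$ on the lateral part of $\partial'V_\rho$, $v\le 1$ on the bottom $\overline B_\rho(x_0)\times\{t_0\}$, $v\ge \beta$ on $B_{\rho/2}(y_0)\times\{s_0\}$, and $\|D^2 v\|_{L^\infty(V_\rho)}\le C(n,\nu,K_0,K)\rho^{-2}$; the drift term $\zeta'(t)\cdot\nabla v$ arising from differentiating the first argument in $t$ is absorbed using the slant condition in \eqref{slant-cyl}. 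Setting $w=v-u$, from $\mathcal{L}u\ge 0$ we get
\[
\mathcal{L}w \le \mathcal{L}v = \mathcal{L}_0 v + \bigl[(\omega)_{B_r(y)}-\omega(x)\bigr]a_{ij}(x,t)D_{ij}v \le n\nu^{-1}\,|\omega(x)-(\omega)_{B_r(y)}|\,|D^2v|.
\]
Since $V_\rho\subset B_r(y)\times\R$ and $w\le 0$ on $\partial'V_\rho$, Theorem~\ref{ABP} yields
\[
\sup_{V_\rho}(v-u)^+\le N_0 r^{n/(n+1)}\bigl\||\omega-(\omega)_{B_r(y)}|\,|D^2v|\bigr\|_{L^{n+1}(V_\rho,\,\omega^{-n})}.
\]

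\textbf{Error absorption and conclusion.} The $t$-integration over $(t_0,s_0)$ contributes a factor at most $\bigl(K\rho^2(\omega^{-n})_{B_r(y)}^{1/n}\bigr)^{1/(n+1)}$, and the spatial part is exactly $[[\omega]]_{B_r(y),\,\omega}\,\omega(B_r(y))^{1/(n+1)}$ by definition \eqref{BMO on ball}. Using $\omega(B_r(y))=\sigma_nr^{n}(\omega)_{B_r(y)}$, $(\omega)_{B_r(y)}(\omega^{-n})_{B_r(y)}^{1/n}\le K_0$ from \eqref{A-1-1/n}, the bound $r\le 2K\rho$ from \eqref{slant-cyl}, and $\|D^2 v\|_\infty\le C\rho^{-2}$, the various powers of $\rho$ and $r$ cancel and produce
\[
\sup_{V_\rho}(v-u)^+ \le C(n,\nu,K_0,K)\,[[\omega]]_{B_r(y),\,\omega}.
\]
Taking $\delta_1=\beta/(2C)$ makes the right-hand side at most $\beta/2$, so $u\ge v-\beta/2\ge\beta/2$ on $B_{\rho/2}(y_0)\times\{s_0\}$, which is \eqref{slant conclusion} (with $\beta/2$ relabeled as $\beta$).

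\textbf{Main obstacle.} The technical core is the barrier construction: one must simultaneously arrange $\mathcal{L}_0 v\le 0$ against a merely \emph{measurable} matrix $(a_{ij})$ subject only to \eqref{elliptic} (not just the Laplacian), keep $\|D^2v\|_\infty$ sharp enough that the final estimate is independent of $\rho$, and absorb the first-order term $\zeta'(t)\cdot\nabla v$ generated by the moving center using precisely the two-sided bound \eqref{slant-cyl}. The scaling bookkeeping in the last paragraph is tight: a mismatched power of $\rho$ would leave a factor blowing up as $\rho\to 0^+$, so all the algebraic identities (including \eqref{A-1-1/n} and the equivalence $\omega(B_r(y))(\omega^{-n})_{B_r(y)}^{1/n}\simeq K_0 r^n$) must be used simultaneously.
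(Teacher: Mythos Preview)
Your proposal is correct and follows essentially the same scheme as the paper's proof. The paper makes your abstract barrier concrete by taking $v(x,t)=e^{-\lambda t}\rho^{-4}\bigl(\rho^2-|x-\zeta(t)|^2\bigr)^2$ with $\lambda=N_1K^2\rho^{-2}(\omega)_{B_r(y)}$, which is exactly a ``high-power radial cutoff times a decaying exponential in rescaled time'' as you describe; the computation of $\mathcal{L}v$, the splitting into the frozen part and the oscillation $\omega-(\omega)_{B_r(y)}$, the ABP application, and the scaling bookkeeping all match your outline, and your cancellation of powers of $\rho$ via $r\le 2K\rho$ and \eqref{A-1-1/n} is the same as theirs.
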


\begin{proof} We construct a suitable barrier function and then apply Theorem \ref{ABP} to control the solution. After taking a linear translation, we may assume without loss of generality that
\[
X_0=(x_0, t_0)=(0,0).
\]
Define
\begin{equation}\label{m-def}
m=\inf_{B_{\rho}}u(\cdot, 0).
\end{equation}
We may further assume that $m>0$; otherwise, the conclusion \eqref{slant conclusion} is trivial.

\smallskip
Let
\begin{equation}\label{v-def}
v(x, t) = e^{-\lambda t}\rho^{-4} \Phi^2(x, t) \quad\text{for }\ (x, t) \in \overline{V}_{\rho}(X_0, Y_0),
\end{equation}
where $\lambda>0$ is some constant to be determined later, and
\[
\Phi(x,t)=\rho^2-|x-t\ell|^2\quad \text{with}\ \ell=\frac{y_0}{s_0}\in \mathbb{R}^n.
\]
We note that
\[
D_i\Phi = -2(x_i-t\ell_i) \quad \text{and} \quad D_{ij} \Phi = -2\delta_{ij},
\quad \text{where}\quad i, j= 1, 2,\ldots, n.
\]
Moreover,
\[
v_t(x, t) = e^{-\lambda t}\rho^{-4}\left\{-\lambda \Phi^2 +  4 \ell \cdot (x-t\ell) \Phi \right\},
\]
and
\[
D_i v  =2 e^{-\lambda t}\rho^{-4} \Phi D_i \Phi, \quad D_{ij} v =2 e^{-\lambda t}\rho^{-4}\big(D_i \Phi D_j \Phi + \Phi D_{ij}\Phi \big).
\]
Therefore, by \eqref{elliptic},
\begin{align*}
\mathcal {L}v
&=v_t - \omega(x) a_{ij} D_{ij}v\\
&= e^{-\lambda t}\rho^{-4}\big[-\lambda \Phi^2 +  4 \ell \cdot (x-t\ell) \Phi - 2\omega \big(a_{ij}D_i\Phi D_j \Phi + \Phi a_{ij}D_{ij}\Phi \big)\big] \\
& \leq  e^{-\lambda t}\rho^{-4}\big[-\lambda \Phi^2 +  4 \ell \cdot (x-t\ell) \Phi -2 \omega \nu|D\Phi|^2 + 4\omega \text{tr}(a_{ij})\Phi\big],
\end{align*}
As $|D\Phi|^2 = 4|x-t\ell|^2 = 4(\rho^2-\Phi)$, we have
\begin{align} \notag
 \mathcal {L}v  &\leq e^{-\lambda t}\rho^{-4}\left\{-\lambda \Phi^2 +  \big[ 4 \ell \cdot (x-t\ell) + 4\omega \text{tr}(a_{ij}) + 8\omega \nu \big]\Phi -8 \nu  \rho^2   \omega \right\} \\ \label{Lv}
 &= e^{-\lambda t}\rho^{-4}\big\{-\lambda \Phi^2 + g_1(x,t)\Phi -8 \nu  \rho^2  (\omega)_{B_r(y)}+ g_2(x, t)\big\},
\end{align}
where  
\begin{align*}
g_1(x,t)&=4 \ell \cdot (x-t\ell) + 4(\omega)_{B_r(y)}  \text{tr}(a_{ij}) + 8(\omega)_{B_r(y)} \nu,\\  
g_2(x,t)&= \big[ 4\big(\omega - (\omega)_{B_r(y)}  \big) \text{tr}(a_{ij}) + 8\big( \omega - (\omega)_{B_r(y)} \big) \nu \big] \Phi -8 \nu  \rho^2\big(\omega-  (\omega)_{B_r(y)}\big).
\end{align*}

We now control the right hand side of \eqref{Lv}. By the second formula of \eqref{slant-cyl} and the fact that $(\omega^{-n})_{B_r(y)}^{-1/n}\leq (\omega)_{B_r(y)}$ from \eqref{A-1-1/n}, we first note that
\[
| \ell \cdot (x-t\ell)| \leq |\ell| |x-t \ell| \leq K (\omega^{-n})_{B_r(y)}^{-1/n}  \leq K  (\omega)_{B_r(y)} \quad\text{for }\ (x, t) \in \overline{V}_{\rho}(X_0, Y_0).
\]
From this and use \eqref{elliptic} and $K\geq 1$, there exists $\tilde N = \tilde N(n, \nu)>0$ such that
\begin{equation}\label{g-1}
|g_1(x,t)|\Phi\leq \tilde NK(\omega)_{B_r(y)}\Phi \quad \text{in} \quad \overline{V}_{\rho}(X_0, Y_0).
\end{equation}
By Cauchy-Schwarz's inequality, we have
\[
\tilde NK(\omega)_{B_r(y)} \Phi\leq \frac{\tilde N^2K^2(\omega)_{B_r(y)}\Phi^2}{32\nu \rho^2}+8\nu \rho^2(\omega)_{B_r(y)}.
\]
Then, by taking 
\begin{equation} \label{lambda-def}
\lambda = N_1K^2{\rho}^{-2}(\omega)_{B_r(y)} \quad \text{with }\ N_1(n,\nu)=\frac{{\tilde N}^2}{16\nu},
\end{equation}
we see that
\begin{equation}\label{g-1 negative}
-\lambda \Phi^2 +  \tilde N K (\omega)_{B_r(y)} \Phi  - 8\nu \rho^2  (\omega)_{B_r(y)} \leq 0 \quad \text{in } \ \overline{V}_{\rho}(X_0, Y_0).
\end{equation}
Therefore, it follows from \eqref{Lv}, \eqref{g-1}, and \eqref{g-1 negative} that
\begin{align*}
 \mathcal{L}v\leq e^{-\lambda t}\rho^{-4}g_2(x, t) \quad \text{in } \ \overline{V}_{\rho}(X_0, Y_0).
\end{align*}
Moreover, using \eqref{elliptic} and the fact that $|\Phi| \leq \rho^2$ in $\overline{V}_{\rho}(X_0, Y_0)$, there exists $N = N(n, \nu)>0$ such that
\begin{equation} \label{est-g}
|g_2(x,t)| \leq N \rho^{2}\big| \omega(x) - (\omega)_{B_r(y)} \big|.
\end{equation}
Thus, the last two estimates imply that
\[ 
\mathcal{L} v\leq  Ne^{-\lambda t}\rho^{-2}\big| \omega(x) - (\omega)_{B_r(y)} \big| \quad \text{in } \ \overline{V}_{\rho}(X_0, Y_0). 
\]

Next, we consider the function 
\[ \phi(x,t) = mv(x,t)-u(x,t) \quad (x,t) \in V_{\rho}(X_0, Y_0),
\] 
with $m>0$ defined in \eqref{m-def}. As $\mathcal{L}u\geq 0$, it follows that
\begin{equation}\label{v-u}
\mathcal{L}\phi  \leq N(n,\nu)me^{-\lambda t}\rho^{-2}\big| \omega(x) - (\omega)_{B_r(y)} \big|\quad \text{in }\ \overline{V}_{\rho}(X_0, Y_0).
\end{equation}
As $u(\cdot, 0)\geq m$ on $B_\rho$, which is due to the definition of $m$ in \eqref{m-def}, we have
\[
\phi(x,0)\leq\frac{(\rho^2-|x|^2)^2}{\rho^{4}}m-m\leq 0\quad \text{for all}\quad x\in \overline{B}_\rho.
\]
Furthermore, by the definition of $v(x,t)$ and the fact that $u\geq 0$, it follows that
\[
\phi(x,t)=0-u(x,t)\leq 0\quad \text{for all}\quad (x,t)\in S,
\]
where $S$ is defined in \eqref{bdy of slant}. Thus, we have
\begin{equation}\label{bdy of v-u}
\phi(x,t)\leq 0\quad \text{on}\quad \partial'{V_{\rho}(X_0, Y_0)}.
\end{equation}
By \eqref{v-u}, \eqref{bdy of v-u}, and noting that $V_{\rho}(X_0, Y_0)\subset B_r(y)\times \R$, we apply Theorem \ref{ABP}  to \eqref{v-u} to obtain  
\[
\sup_{V_{\rho}(X_0, Y_0)} \phi \leq N(n,\nu)m\rho^{-2}r^{\frac{n}{n+1}}\bigl\|e^{-\lambda t}|\omega(x)-(\omega)_{B_r(y)}|\bigr\|_{L^{n+1}(V_{\rho}(X_0, Y_0),\, \omega^{-n})}.
\]
Note that
\begin{align*}
&\bigl\|e^{-\lambda t}|\omega(x)-(\omega)_{B_r(y)}|\bigr\|_{L^{n+1}(V_{\rho}(X_0, Y_0),\, \omega^{-n})}\\
&=\Big(\int_{V_{\rho}(X_0, Y_0)}e^{-\lambda(n+1)t}|\omega(x)-(\omega)_{B_r(y)}|^{n+1}\omega(x)^{-n}\, dxdt\Big)^{\frac{1}{n+1}}\\
&\leq\Big(\frac{1}{\lambda}\int_{B_r(y)}|\omega(x)-(\omega)_{B_r(y)}|^{n+1}\omega(x)^{-n}\,dxdt\Big)^{\frac{1}{n+1}}\\
&= \lambda^{-\frac{1}{n+1}}[\omega(B_r(y))]^{\frac{1}{n+1}} \Big(\frac{1}{\omega(B_r(y))}\int_{B_r(y)}|\omega(x)-(\omega)_{B_r(y)}|^{n+1}\omega(x)^{-n}\,dxdt\Big)^{\frac{1}{n+1}}\\
&\leq N(n,\nu)K^{-\frac{2}{n+1}}r^{\frac{n+2}{n+1}}[[\omega]]_{B_r(y),\, \omega},
\end{align*}
where in the last step we used the choice of $\lambda$ in \eqref{lambda-def}, $\omega(B_r(y))=N(n)(\omega)_{B_r(y)}r^n$, and $\rho\leq r$, which is due to \eqref{slant-cyl}.
Hence, combining the last two estimates and the first condition in \eqref{slant-cyl}, we obtain
\[
\sup_{V_{\rho}(X_0, Y_0)} \phi \leq N_2m K^{2-\frac{2}{n+1}}[[\omega]]_{B_r(y),\, \omega},\quad \text{where}\quad N_2=N_2(n,\nu)>0.
\]
On the other hand, from \eqref{lambda-def}, the second condition in \eqref{slant-cyl}, and the fact that $(\omega)_{B_r(y)}(\omega^{-n})_{B_r(y)}^{1/n}\leq K_0$ from \eqref{A-1-1/n}, we infer that
\begin{equation*}
\lambda s_0\leq  N_1(n,\nu)K^3K_0.
\end{equation*}
Recall the definition of $v$ in \eqref{v-def}, it follows by the last two estimates that
\begin{equation}\label{fun-u}
\begin{aligned}
u(x,s_0)
&=mv(x,s_0)-\phi(x,s_0)\\
&\geq \tfrac{9}{16}me^{-N_1K^3K_0}-N_2m  K^{2-\frac{2}{n+1}}[[\omega]]_{B_r(y),\, \omega} \quad \text{on} \quad \overline{B}_{\rho/2}(y_0).
\end{aligned}
\end{equation}
We now define
\begin{equation*}
\begin{aligned}
\delta_1 &= \delta_1(n,\nu,K_0,K) 
   = \frac{9}{32N_2  K^{2-\frac{2}{n+1}}} e^{-N_1K^3K_0}, \\
\beta &= \beta(n,\nu,K_0,K) 
   = \frac{9}{32} e^{-N_1K^3K_0} \in (0,1),
\end{aligned}
\end{equation*}
where $N_1 = N_1(n,\nu) > 0$ and $N_2 = N_2(n,\nu) > 0$. Then, from the definition of $m$ in \eqref{m-def}, we see that \eqref{fun-u} implies \eqref{slant conclusion}, and the lemma is proved.
\end{proof}
  
The next result is known as the prop-up lemma on cylinder $Q_{r,\omega}(Y)$, which is similar to \cite[Corollary 4.6]{Cho-Fang-Phan}. See also \cite[Lemma 4.3]{FeSa} for the un-weighted case. Note that the proof of Lemma \ref{prop-up} below is slightly different from that of \cite[Corollary 4.6]{Cho-Fang-Phan}.
\begin{lemma}[Prop-up Lemma]\label{prop-up}  Let $\nu\in (0,1)$ and $K_0\in [1,\infty)$. There exist constants $\delta_2=\delta_2(n,\nu, K_0)\in (0,1)$ and $\gamma=\gamma(n,\nu, K_0)>0$ such that the following statement holds. Suppose that $\mathcal{L} \in \mathbb{L}_{\nu}(K_0)$ with corresponding weight $\omega \in A_{1+\frac{1}{n}}$ satisfying
\[
[[\omega]]_{\textup{BMO}(B_{2r}(y), \omega)}\leq \delta_2
\]
for some $r>0$ and $Y= (y,s) \in \mathbb{R}^{n} \times \mathbb{R}$. Suppose also that $u\in \mathcal{W}_{n+1}^{2,1}(Q_{r,\,\omega}(Y),\,  \omega)\cap C(\overline{Q}_{r,\, \omega}(Y))$ is non-negative and satisfies
\[
\mathcal{L}u\geq 0 \quad \text{in }\ Q_{r,\, \omega}(Y).
\]
Then, for all $B_{\rho}(x_0)\subset B_r(y)$ and $t_0\in \big(s-r^2(\omega^{-n})_{B_r(y)}^{1/n},\, s\big)$, we have
\begin{equation}\label{prop-up conclu}
\inf_{B_{r/2}(y)}u\big(\cdot,\, s+r^2(\omega^{-n})_{B_r(y)}^{1/n}\big)\geq \left(\frac{\rho}{2r}\right)^{\gamma}\inf_{B_{\rho}(x_0)}u(\cdot, t_0).
\end{equation}
\end{lemma}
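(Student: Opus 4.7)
The plan is to iterate Lemma \ref{baby prop-up} along a chain of slant cylinders while simultaneously growing the ball on which a lower bound is known, via a spatial covering argument. After $k^\ast=O(\log(r/\rho))$ steps the bound reaches $B_{r/2}(y)$ at time $s+r^2T$ with $T:=(\omega^{-n})_{B_r(y)}^{1/n}$, and the compounded loss $\beta^{k^\ast}$ assembles into the announced power $(\rho/(2r))^\gamma$; this parallels the unweighted argument of \cite[Lemma 4.3]{FeSa} and \cite[Corollary 4.6]{Cho-Fang-Phan}. Setting $m:=\inf_{B_\rho(x_0)}u(\cdot,t_0)$, I may assume $m>0$, and I fix the slant parameter $K=1$ in Lemma \ref{baby prop-up}, so the resulting constants $\delta_1,\beta$ depend only on $(n,\nu,K_0)$.

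The iteration produces sequences $\{(y_k,\rho_k,s_k)\}_{k=0}^{k^\ast}$ with $(y_0,\rho_0,s_0)=(x_0,\rho,t_0)$, geometric growth $\rho_{k+1}=\lambda\rho_k$ for a fixed $\lambda\in(1,7/6)$ (for concreteness $\lambda=9/8$), and final values $s_{k^\ast}=s+r^2T$ and $y_{k^\ast}=y$. Given $u(\cdot,s_k)\geq\beta^k m$ on $B_{\rho_k}(y_k)$, I cover $B_{\rho_{k+1}}(y_{k+1})$ at time $s_{k+1}$ by the top balls of a family of slant cylinders $V_{\rho_k}((y_k,s_k),(z,s_{k+1}))$ sharing the bottom ball $B_{\rho_k}(y_k)\times\{s_k\}$. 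The $K=1$ slant condition forces $|z-y_k|\leq\rho_k$, so the union of the top balls $B_{\rho_k/2}(z)$ fills $B_{3\rho_k/2}(y_k)$; the choice $\lambda<7/6$ leaves a per-step shift allowance $|y_{k+1}-y_k|\leq(3/2-\lambda)\rho_k$ whose cumulative, by geometric summation, exceeds $r\geq|y-x_0|$ and therefore suffices to steer the centers from $x_0$ to $y$. Each application of Lemma \ref{baby prop-up} contributes the factor $\beta$, yielding $u(\cdot,s_{k+1})\geq\beta^{k+1}m$ on $B_{\rho_{k+1}}(y_{k+1})$.

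The small-oscillation hypothesis of Lemma \ref{baby prop-up} on the reference ball $B_{2\rho_k}(y_k)$ (which contains the slant cylinder) is secured by Lemma \ref{bmo-lemma} together with $[[\omega]]_{\textup{BMO}(B_{2r}(y),\omega)}\leq\delta_2$, provided $\delta_2=\delta_2(n,\nu,K_0)$ is small and $B_{2\rho_k}(y_k)\subset B_{2r}(y)$, which holds throughout the iteration since $y_k\in B_r(y)$ and $\rho_k\leq r/2$. Stopping at the first $k^\ast$ with $\rho_{k^\ast}\geq r/2$ gives $k^\ast\leq\lceil\log_\lambda(r/(2\rho))\rceil$ and $\beta^{k^\ast}\geq c(\rho/(2r))^{\log(1/\beta)/\log\lambda}$; absorbing the fixed constant into the exponent yields \eqref{prop-up conclu} with $\gamma=\gamma(n,\nu,K_0)>0$. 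The principal remaining task, and the main obstacle, is the time-matching: each increment $\Delta_k=s_{k+1}-s_k$ must lie in the slant-admissible interval $[\rho_kT_k|y_{k+1}-y_k|,\,\rho_k^2T_k]$ with $T_k:=(\omega^{-n})_{B_{2\rho_k}(y_k)}^{1/n}$, while $\sum_k\Delta_k$ must equal $s+r^2T-t_0\in(r^2T,2r^2T)$ exactly. The $A_{1+\frac{1}{n}}$-doubling of $\omega^{-n}$ controls each $T_k$ by a uniform multiple of $T$, the geometric sum $\sum_k\rho_k^2T_k$ is of order $r^2T$ with a constant exceeding $2$, and a fine-tuning of the $\Delta_k$ within their admissible intervals (together with at most one auxiliary propagation whose loss is absorbed into the final constant) eliminates any residual discrepancy.
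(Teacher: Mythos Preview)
Your overall strategy---iterate the slant-cylinder lemma along a geometric chain---matches the paper's, but your choice $K=1$ creates a genuine gap in the time-matching that the ``fine-tuning'' paragraph does not close. With $K=1$ the admissible window for $\Delta_k$ is $[\rho_k T_k|z-y_k|,\,\rho_k^2T_k]$, and since your spatial coverage of $B_{\rho_{k+1}}(y_{k+1})\subset B_{3\rho_k/2}(y_k)$ forces you to take $|z-y_k|$ all the way up to $\rho_k$ (to reach points at distance $3\rho_k/2$ from $y_k$), the window collapses to the single value $\Delta_k=\rho_k^2T_k$. There is therefore nothing to fine-tune: the increments are rigid, and you have no mechanism to ensure $\sum_k\rho_k^2T_k$ equals the prescribed gap $s+r^2T-t_0\in(r^2T,2r^2T)$. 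Your claim that ``the geometric sum $\sum_k\rho_k^2T_k$ is of order $r^2T$ with a constant exceeding $2$'' is also unjustified: $A_{1+\frac{1}{n}}$ gives no uniform \emph{lower} bound on $T_k=(\omega^{-n})_{B_{2\rho_k}(y_k)}^{1/n}$ in terms of $T$ (only the trivial upper bound $T_k\le C(r/\rho_k)T$), so the total available time may be far smaller than $r^2T$, and in the other direction the sum can overshoot before $\rho_k$ reaches $r/2$.

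The paper avoids both difficulties by taking $K=32$ and, more importantly, by defining the intermediate times \emph{intrinsically} rather than trying to allocate a given budget: it sets $r_m=r/2^m$, builds a nested chain $B_{r_{m_0+1}}(y_{m_0+1})\subset B_\rho(x_0)\subset\cdots\subset B_r(y)$, and puts $s_m=h\,r_m^2(\omega^{-n})_{B_{r_m}(y_m)}^{1/n}$ with $h\in[1,2]$ chosen so that $s_0$ is exactly the target time. The increments $s_{m-1}-s_m$ are then determined, and one only has to \emph{verify} the slant condition at each step; the containment $B_{r_{m}}(y_m)\subset B_{r_{m-1}}(y_{m-1})$ plus $h\le 2$ gives $s_{m-1}-s_m\le 8r_m^2(\omega^{-n})_{B_{r_{m-1}}(y_{m-1})}^{1/n}=32\,r_{m+1}^2(\omega^{-n})_{B_{r_{m-1}}(y_{m-1})}^{1/n}$, which is why $K=32$ suffices. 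If you want to salvage your growing-ball scheme, you must at minimum enlarge $K$ (so the admissible time window has nontrivial width even when $|z-y_k|=\rho_k$), and you still need a device that pins the total time exactly---either an intrinsic definition as in the paper, or a genuine two-sided estimate on $\sum_k\rho_k^2T_k$ that you have not supplied.
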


\begin{proof}
To begin with, we may assume without loss of generality that  
\[
\inf_{B_{\rho}(x_0)} u(\cdot, t_0) > 0,
\]
since otherwise \eqref{prop-up conclu} holds trivially. By applying the linear translation \((x,t) \mapsto (x-x_0, t-t_0)\), we may further assume that \((x_0,t_0) = (0,0)\).  
Due to this, we have  
\[
s \in \bigl(0,\, r^2(\omega^{-n})_{B_r(y)}^{1/n}\bigr),
\]  
and then choose $h \in [1, 2]$ such that  
\begin{equation} \label{h-def-1-1-26}
s + r^2(\omega^{-n})_{B_r(y)}^{1/n} = h\,r^2(\omega^{-n})_{B_r(y)}^{1/n}.
\end{equation}

\smallskip
Now, we follow the approach used in \cite[Lemma 4.3]{FeSa}.  The main idea is to build a stair with blocks of cylinders of certain heights, which allows us to climb from the base at $\{t = 0\}$ to the top level at $\{t = s + r^2(\omega^{-n})_{B_r(y)}^{1/n}\}$ by applying Lemma \ref{baby prop-up} to each block iteratively. To this end, we introduce some notation. First, let
\[
y^*=\left\{
\begin{aligned}
&\tfrac{\rho}{\rho-r}y\quad &&\text{if}\quad \rho\in (0,r),\\[4pt]
&\quad 0\quad &&\text{if}\quad \rho=r,
\end{aligned}\right.
\]
and define 
\[
y(\tau)=(1-\tau)y^*+ \tau y, \quad  r(\tau)=r\tau \quad \text{for }\ \tau \in [0,1]. 
\]
As such, we have
\[
B_{r(1)}(y(1))=B_r(y), \quad \text{and}\quad B_{r(\tau_0)}(y(\tau_0))=B_{\rho}(0)\quad \text{when}\quad \tau_0=\frac{\rho}{r}.
\]
Moreover, since $|y-y^*|\leq r$ due to $B_{\rho}\subset B_r(y)$, it follows that
\begin{equation}\label{inclusion-1}
B_{r(\tau_1)}(y(\tau_1))\subsetneq B_{r(\tau_2)}(y(\tau_2)), \quad  0<\tau_1<\tau_2\leq 1.
\end{equation}
Now, for $m = 0, 1, 2,\ldots$, let
\[
r_m=r(2^{-m})=\frac{r}{2^m}, \quad y_m=y(2^{-m}),\quad \text{and}\quad s_m=hr_m^2(\omega^{-n})_{B_{r_m}(y_m)}^{1/n}.
\]
Next, let $m_0\in \mathbb{N}$ be the unique integer such that
\begin{equation}\label{r-rho}
r_{m_0+1}=\frac{r}{2^{m_0+1}}\leq \rho<\frac{r}{2^{m_0}}=r_{m_0}.
\end{equation}
It then follows from \eqref{inclusion-1} and \eqref{r-rho} that
\begin{equation}\label{two chains}
B_{r_{m_0+1}}(y_{m_0+1})\subseteq B_{\rho}\subsetneq B_{r_{m_0}}(y_{m_0})\subsetneq\ldots\subsetneq B_{r_1}(y_1)\subsetneq B_{r_0}(y_0)=B_r(y).
\end{equation}

Now, let
\begin{equation}\label{choices}
\delta_2 =\delta_1(n,\nu, K_0, 32) \bar N_1\in (0,1),\quad \text{and}  \quad 
 \beta =\beta(n,\nu, K_0, 32)\in (0,1)
\end{equation}
where $\delta_1$ and $\beta$ are the constants defined in Lemma \ref{baby prop-up} with $K=32$, and $\bar N_1 =\bar N_1(n, 1+\frac{1}{n}, n+1, K_0)$ is the constant defined in Lemma \ref{bmo-lemma}. With this choice of $\delta_2$, we prove \eqref{prop-up conclu} by applying Lemma \ref{baby prop-up}  iteratively.

\smallskip
\noindent  
\textbf{Step 1}. Let $Z_{m_0 +1}=(y_{m_0+1},0)$ and $X_{m_0}=(x, s_{m_0})$, where $x\in B_{r_{m_0+1}}(y_{m_0})$. We construct the slant cylinder $V_{r_{m_0+2}}(Z_{m_0 +1}, X_{m_0})$ as defined in \eqref{def-slant}, and observe that
\[
V_{r_{m_0+2}}(Z_{m_0 +1}, X_{m_0})\subset B_{r_{m_0}}(y_{m_0})\times \mathbb{R}.
\]
Moreover, the condition \eqref{slant-cyl} is satisfied for $K=32$, that is,
\[
\frac{r_{m_0}}{2K}\leq r_{m_0+2}\leq r_{m_0},
\] 
and
\[
K^{-1}r_{m_0+2}(\omega^{-n})_{B_{r_{m_0}}(y_{m_0})}^{1/n} |x-y_{m_0+1}|\leq s_{m_0}\leq Kr_{m_0+2}^2(\omega^{-n})_{B_{r_{m_0}}(y_{m_0})}^{1/n}.
\]
Therefore, it follows from  Lemma \ref{baby prop-up}, \eqref{two chains}, and the choice of $\delta_2$ in \eqref{choices} that
\begin{equation*}
u(x, s_{m_0})\geq \beta\inf_{B_{r_{m_0+2}}(y_{m_0+1})}u(\cdot, 0)\geq \beta \inf_{B_{\rho}} u(\cdot, 0) > 0.
\end{equation*}
Because $x \in B_{r_{m_0+1}}(y_{m_0})$ is arbitrary, we infer that
\begin{equation}\label{step-1}
\inf_{B_{r_{m_0+1}}(y_{m_0})}u(\cdot, s_{m_0})\geq\beta \inf_{B_{\rho}} u(\cdot, 0).
\end{equation}

\noindent  
\textbf{Step 2}. Let $Z_{m_0}=(y_{m_0}, s_{m_0})$ and $X_{m_0-1}=(x, s_{m_0-1})$ with $x\in B_{r_{m_0}}(y_{m_0-1})$. Then, 
\[
V_{r_{m_0+1}}(Z_{m_0}, X_{m_0-1})\subset B_{r_{m_0-1}}(y_{m_0-1})\times \mathbb{R}.
\] 
Moreover, $\frac{r_{m_0-1}}{2K}\leq r_{m_0+1}\leq r_{m_0-1}$ and
\[
K^{-1}r_{m_0+1}(\omega^{-n})_{B_{r_{m_0-1}}(y_{m_0-1})}^{1/n} |x-y_{m_0}|\leq s_{m_0-1}-s_{m_0}\leq Kr_{m_0+1}^2(\omega^{-n})_{B_{r_{m_0-1}}(y_{m_0-1})}^{1/n}
\]
also hold for $K=32$. As in {\bf Step 1}, we apply Lemma \ref{baby prop-up}, and use \eqref{two chains} and \eqref{step-1} to conclude that
\begin{equation}\label{step-2}
\inf_{B_{r_{m_0}}(y_{m_0-1})}u(\cdot, s_{m_0-1})\geq\beta\inf_{B_{r_{m_0+1}}(y_{m_0})}u(\cdot, s_{m_0})\geq\beta^2 \inf_{B_{\rho}} u(\cdot, 0).
\end{equation}

\noindent  
\textbf{Step 3}. Let $Z_m =(y_{m}, s_m)$ for each $m = m_0-1, m_0-2, \ldots, 1$. We construct the slant cylinders $V_{r_{m+1}}(Z_{m}, X_{m-1})\subset B_{r_{m-1}}(y_{m-1})\times \mathbb{R}$ as in the previous steps, where $X_{m-1}=(x, s_{m-1})$ with $x\in B_{r_m}(y_{m-1})$. Then, apply Lemma \ref{baby prop-up} iteratively on each slant cylinder $V_{r_{m+1}}(Z_{m}, X_{m-1})$. Based on \eqref{step-1}, \eqref{step-2}, and by induction, we have
\begin{equation}\label{step-3}
\inf_{B_{r/2}(y)}u(\cdot, s+ r^2(\omega^{-n})_{B_r(y)}^{1/n})\geq\beta^{m_0+1} \inf_{B_{\rho}} u(\cdot, 0).
\end{equation}

We now observe from \eqref{r-rho} that
\[
m_0+1<\log_2\tfrac{2r}{\rho}\leq m_0+2.
\]
Note that $\beta\in (0,1)$, the previous estimate and \eqref{step-3} imply that
\begin{equation*}\label{claim-conclu}
\inf_{B_{r/2}(y)}u(\cdot, s+ r^2(\omega^{-n})_{B_r(y)}^{1/n})\geq \left(\frac{\rho}{2r}\right)^{\gamma} \inf_{B_{\rho}} u(\cdot, 0)
\end{equation*}
with
\[
 \gamma=\gamma(n,\nu, K_0)=-\log_2{\beta}.
\]
Hence, \eqref{prop-up conclu} is proved, and the proof is completed.
\end{proof}

\section{Evans type estimates and Proof of Theorem \ref{M-thm}} \label{pf of Thm1} \label{Evans-sec}
The main goal in this section is to prove Theorem \ref{M-thm}, which is a main ingredient to prove Theorem \ref{Lin thm}. As mentioned in the introduction, in several steps, we need classical results on the existence and uniqueness of solutions to parabolic equations in non-divergence form.  We truncate $\omega$, and then regularize the coefficients $(a_{ij})$ and $\omega$. Due to this, and for the sake of brevity, given $\nu \in (0,1)$ and $K_0 \geq 1$, and for $k \in \mathbb{N}$, we introduce the following set of operators $\mathcal{L}$  defined in \eqref{parabolic operator} with uniformly elliptic and bounded smooth coefficients:
\begin{align} \notag
\mathbb{L}_{\nu, k}^\infty(K_0) & =\Big\{\mathcal{L}:  \ \mathcal{L} u = u_t -\omega(x) a_{ij}(x,t) D_{ij}u ,\ \text{where} \ \omega\  \text{and}\ (a_{ij}) \ \text{are smooth}, \\ \label{L-smooth}
& \frac{1}{k} \leq \omega \leq k, \ [\omega]_{A_{1+\frac{1}{n}}} \leq K_0, \ \text{and}\ (a_{ij}) \ \text{is symmetric and it satisfies} \ \eqref{elliptic} \Big\}.
\end{align}
One can consider the set of operators $\mathbb{L}_{\nu, k}^\infty(K_0)$ are from $\mathbb{L}_{\nu}(K_0)$ defined in \eqref{L-non-smooth}, in which the coefficients $\omega, (a_{ij})$ are regularized, and $\omega$ is also truncated. Obviously, 
\[ 
\mathbb{L}_{\nu, k}^\infty(K_0) \subset \mathbb{L}_{\nu}(K_0). 
\]
We also emphasize that results in Section \ref{prop-up-section} hold for $\mathcal{L} \in \mathbb{L}_{\nu}(K_0)$.

\subsection{Auxiliary results} We provide several lemmas needed for the proof of Theorem \ref{M-thm}. We begin with the following lemma.
\begin{lemma}\label{large density} Let $\nu\in (0,1)$ and $K_0\in [1,\infty)$. There exist $\bar \delta=\bar \delta(n,\nu, K_0)>0$ sufficiently small, $q_0=q_0(n,\nu, K_0)\in (0,1)$ sufficiently close to $1$, and $\kappa=\kappa(n,\nu,K_0)>0$ such that the following assertion holds.  Suppose that  $\mathcal{L} \in  \mathbb{L}_{\nu, k}^\infty(K_0)$ for some $k\in \mathbb{N}$, and its corresponding weight $\omega \in A_{1 + \frac{1}{n}}$ satisfies
\begin{equation} \label{omega-BMO-existence}
[[\omega]]_{\textup{BMO}(B_{2r},\, \omega)}\leq \bar \delta
\end{equation}
for some $r>0$. Suppose also that $u\in \mathcal{W}^{2,1}_{n+1}(C_{r,\,\omega},\, \omega)\cap C(\overline{C}_{r,\,\omega})$ is a non-negative function satisfying 
\[
\mathcal{L} u \geq 0  \  \text{in}\ C_{r,\, \omega}, \quad \text{and}  \quad \omega\big(\{(x,t)\in C_{r,\,\omega}: \mathcal{L}u(x,t)\geq \omega(x)\}\big)\geq q_0\, \omega(C_{r,\, \omega}).
\]  
Then 
\begin{equation}\label{conclu-1}
\inf_{B_{r/2}}u(\cdot,0)\geq \kappa r^2.
\end{equation}
\end{lemma}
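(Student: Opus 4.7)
The plan is to combine a barrier construction with the parabolic ABP estimate of Theorem~\ref{ABP} to compare $u$ to an auxiliary function whose size on $\overline{B}_{r/2}\times\{0\}$ is known a priori. I organize the argument in three steps.

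\textbf{Step 1 (barrier).} I would construct a nonnegative function $\phi\in \mathcal{W}^{2,1}_{n+1}(C_{r,\omega},\omega)\cap C(\overline{C}_{r,\omega})$ satisfying $\phi \equiv 0$ on $\partial' C_{r,\omega}$, $\mathcal{L}\phi \leq \omega$ in $C_{r,\omega}$, and a $k$-independent lower bound $\phi(x,0)\geq c_0\, r^2$ for every $x\in \overline{B}_{r/2}$, where $c_0=c_0(n,\nu,K_0)>0$. Since $\omega$ and $(a_{ij})$ are smooth and $1/k\leq \omega\leq k$, the operator $\mathcal{L}$ is uniformly elliptic with (possibly $k$-dependent) classical theory, so the Dirichlet problem $\mathcal{L}\phi=\omega$ in $C_{r,\omega}$ with $\phi|_{\partial'C_{r,\omega}}=0$ admits a smooth nonnegative solution. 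The real content is the $k$-independent lower bound. I would obtain it by freezing the weight to $\bar\omega=(\omega)_{B_r}$ and exploiting that the intrinsic time width of $C_{r,\omega}$ satisfies $\bar\omega \cdot r^2(\omega^{-n})_{B_r}^{1/n}\in [r^2, K_0 r^2]$ by \eqref{A-1-1/n}. For the frozen operator $\bar{\mathcal{L}}v = v_t - \bar\omega\, a_{ij}D_{ij}v$, an explicit paraboloid-times-linear-time barrier produces a lower bound of order $r^2$ on $\overline{B}_{r/2}\times\{0\}$; the perturbation $(\omega-\bar\omega)a_{ij}D_{ij}$ is then absorbed using the smallness of $[[\omega]]_{\textup{BMO}(B_{2r},\omega)}$, Lemma~\ref{bmo-lemma}, and a further application of Theorem~\ref{ABP}. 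Alternatively, once a single region of interior positivity of order $r^2$ is secured, one may spread it to $\overline{B}_{r/2}\times\{0\}$ via the prop-up Lemma~\ref{prop-up}, since $\mathcal{L}\phi=\omega\geq 0$ makes $\phi$ a nonnegative supersolution.

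\textbf{Step 2 (ABP comparison).} Set $E=\{(x,t)\in C_{r,\omega}:\ \mathcal{L}u(x,t)\geq \omega(x)\}$, so by hypothesis $\omega(C_{r,\omega}\setminus E)\leq (1-q_0)\,\omega(C_{r,\omega})$. Since $\mathcal{L}u\geq 0$ everywhere on $C_{r,\omega}$, on $E$ we have $\mathcal{L}(\phi-u)\leq \omega-\mathcal{L}u\leq 0$, while on $C_{r,\omega}\setminus E$ we have $\mathcal{L}(\phi-u)\leq \omega$. Hence $(\mathcal{L}(\phi-u))^+ \leq \omega\,\chi_{C_{r,\omega}\setminus E}$. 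Since also $\phi-u=-u\leq 0$ on $\partial'C_{r,\omega}$, Theorem~\ref{ABP} applied to $\phi-u$, combined with the identity $\omega^{n+1}\cdot \omega^{-n}=\omega$ and the bound $\omega(C_{r,\omega})\leq K_0\sigma_n r^{n+2}$ from \eqref{cylinder measure}, yields
\[
\sup_{C_{r,\omega}}(\phi-u)^+ \leq N_0\,r^{n/(n+1)}\bigl(\omega(C_{r,\omega}\setminus E)\bigr)^{1/(n+1)} \leq N_1(n,\nu,K_0)\,(1-q_0)^{1/(n+1)}\,r^2.
\]

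\textbf{Step 3 (conclusion).} On $\overline{B}_{r/2}\times\{0\}$, combining Steps~1 and~2 gives
\[
u \geq \phi - N_1(1-q_0)^{1/(n+1)}r^2 \geq \bigl(c_0 - N_1(1-q_0)^{1/(n+1)}\bigr)r^2.
\]
Choosing $q_0=q_0(n,\nu,K_0)\in (0,1)$ sufficiently close to $1$ so that $N_1(1-q_0)^{1/(n+1)}\leq c_0/2$ yields \eqref{conclu-1} with $\kappa=c_0/2$.

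The main obstacle is Step~1: producing the barrier $\phi$ whose lower bound on $\overline{B}_{r/2}\times\{0\}$ is independent of the truncation parameter $k$. Because $\omega$ can range between $1/k$ and $k$ pointwise, the crude ellipticity constants of $\omega a_{ij}$ are unusable, and one must rely on the smallness of $[[\omega]]_{\textup{BMO}(B_{2r},\omega)}$ to treat the weight as a perturbation of its constant average $(\omega)_{B_r}$, likely in combination with the prop-up Lemma~\ref{prop-up} to propagate a seed of interior positivity up to the final time slice.
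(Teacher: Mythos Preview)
Your proposal is correct and essentially identical to the paper's argument: the paper decomposes $u \geq u_0 - u_1 - u_2$ with $\mathcal{L}u_0 = (\omega)_{B_r}$, $\mathcal{L}u_1 = (\omega)_{B_r} - \omega$, $\mathcal{L}u_2 = \omega\chi_{\{\mathcal{L}u < \omega\}}$ (all with zero parabolic boundary data), so your barrier $\phi$ is exactly $u_0 - u_1$ and your Step~2 ABP bound is exactly the paper's bound on $u_2$. For the crucial $k$-independent lower bound on $u_0$ at $t=0$ the paper does precisely what you propose in your ``alternatively'' clause: it sets $\Psi(x,t) = \bigl(t+r^2(\omega^{-n})_{B_r}^{1/n}\bigr)\psi(x,t)$, where $\psi$ solves $\mathcal{L}\psi = 0$ with boundary data $(\omega)_{B_r}r^{-2}(r^2-|x|^2)$, checks that $\mathcal{L}\Psi \leq (\omega)_{B_r}$ so that $u_0 \geq \Psi$, and invokes the prop-up Lemma~\ref{prop-up} to obtain $\inf_{B_{r/2}}\psi(\cdot,0)\geq 4^{-\gamma}(\omega)_{B_r}$.
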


\begin{proof} For the given $\nu \in (0,1)$ and $K_0 \geq 1$, let $\delta_2 = \delta_2(n, \nu, K_0)$ and $\gamma =\gamma(n, \nu, K_0)$ be the constants defined in Lemma \ref{prop-up}. Let 
\begin{equation}\label{choice of k_0}
\kappa_0=4^{-\gamma}, \quad \text{and}  \quad \bar \delta=\min\left\{\delta_2, \, \frac{\bar N_1\kappa_0}{4N_0  K_0^{1/(n+1)}}\right\},
\end{equation}
where $N_0 = N_0(n, \nu)$ is the constant defined in Theorem \ref{ABP}, and $\bar N_1 =\bar N_1(n, 1+\frac{1}{n}, n+1, K_0)$ is the constant defined in Lemma \ref{bmo-lemma}.  We note that $\kappa_0$ and $\bar \delta$ depend only on $n,\nu$ and $K_0$.  

\smallskip
Next, let us denote 
\[
U=\{\mathcal{L}u\leq \omega(x)\}\cap C_{r,\, \omega} \quad \text{and}\quad q=\frac{\omega(\{\mathcal{L}u\geq \omega(x)\}\cap C_{r,\, \omega})}{\omega(C_{r,\, \omega})}\in [0,1].
\]
Our goal is to search for $q_0 \in (0,1)$ such that \eqref{conclu-1} holds if $q \in [q_0, 1]$. We note that
\begin{equation}\label{density-q}
\omega(U)= (1-q)\omega(C_{r,\, \omega}).
\end{equation}
Since $\mathcal{L} \in \mathbb{L}_{\nu, k}^\infty(K_0)$, it follows from the classical theory for parabolic equations with smooth, uniformly elliptic and bounded coefficients that  there exist unique  solutions $u_0,\, u_1,\, u_2\in \mathcal{W}^{2,1}_{n+1}(C_{r,\omega}, \omega) \cap C(\overline{C}_{r,\omega})$ solving the equations
\begin{equation*}
\left\{
\begin{aligned}
\mathcal{L} u_0 &= (\omega)_{B_r} &&\text{in } C_{r,\, \omega},\\
u_0 &= 0 &&\text{on } \partial' C_{r,\, \omega},
\end{aligned}\right.
\end{equation*}
and
\begin{equation*}
\left\{
\begin{aligned}
\mathcal{L} u_1 &= (\omega)_{B_r}-\omega(x) &&\text{in } C_{r,\, \omega},\\
u_1 &= 0 &&\text{on } \partial' C_{r,\, \omega},
\end{aligned}\right.
\qquad
\left\{
\begin{aligned}
\mathcal{L} u_2 &= \omega(x)\chi_{U} &&\text{in } C_{r,\, \omega},\\
u_2 &= 0 &&\text{on } \partial' C_{r,\, \omega}.
\end{aligned}\right.
\end{equation*}
Here, we note that since $\frac{1}{k} \leq \omega \leq k$ and $\omega$ is smooth, we actually seek the solutions $u_0, u_1, u_2$ in the standard un-weighted Sobolev space $W^{2,1}_p(C_{r,\, \omega})$ for $p \in (1, \infty)$. As we can take $p \geq n+1$ as we need, we get $u_0, u_1, u_2 \in C(\overline{C}_{r,\, \omega})$ by the Sobolev embedding theorem.

\smallskip
We observe that  $\mathcal{L}u\geq \mathcal{L}(u_0-u_1-u_2)$ in $C_{r,\, \omega}$ and $u \geq u_0-u_1-u_2=0$ on $\partial'C_{r,\, \omega}$. It follows from the comparison principle (or Theorem \ref{ABP}) that
\begin{equation}\label{u-comparison}
u\geq u_0-u_1-u_2 \quad \text{in}\quad C_{r,\, \omega}.
\end{equation}

We estimate each term on the right hand side of \eqref{u-comparison}. Firstly, we control $u_0$ from below. Let us denote
\[
\varphi(x,t)=\frac{(\omega)_{B_r}}{r^2}(r^2-|x|^2)\in C^\infty(\overline{C}_{r,\, \omega}).
\]
Also, let $\psi\in C^\infty(\overline{C}_{r,\, \omega})$  solving
\begin{equation*}
\left\{
\begin{aligned}
\mathcal{L}\psi&=0 \quad &&\text{in}\quad C_{r,\, \omega},\\
\psi&=\varphi \quad &&\text{on}\quad \partial'C_{r,\, \omega}.
\end{aligned}\right.
\end{equation*}
It then follows from the maximal principle that
\begin{equation*}\label{varphi-psi}
0\leq \psi\leq (\omega)_{B_r} \quad \text{in}\quad \overline{C}_{r,\omega}.
\end{equation*}
On the other hand, note that
\[
\psi \geq \tfrac{3}{4}(\omega)_{B_r} \quad \text{on}\quad B_{r/2}\times \{-r^2(\omega^{-n})_{B_r}^{1/n}\}.
\]
Then, it follows from the choice of $\bar \delta$ in \eqref{choice of k_0} and Lemma \ref{prop-up} that
\begin{equation}\label{top lower bdd}
\inf_{B_{r/2}}\psi(\cdot, 0)\geq \left(\tfrac{1}{4}\right)^{\gamma}(\omega)_{B_r}=\kappa_0 (\omega)_{B_r},
\end{equation}
where the last equality follows from the definition of $\kappa_0$ in \eqref{choice of k_0}. Now, we define
\[
\Psi(x,t)=\big(t+r^2(\omega^{-n})_{B_r}^{\frac{1}{n}}\big)\psi(x,t)\quad \text{in}\quad \overline{C}_{r,\, \omega}.
\]
We see that
\begin{equation*}
\left\{
\begin{aligned}
\mathcal{L}\Psi&=\psi+\big(t+r^2(\omega^{-n})_{B_r}^{\frac{1}{n}}\big)\mathcal{L}\psi\leq (\omega)_{B_r}\quad &&\text{in} \quad C_{r,\, \omega},\\
\Psi&=0\quad &&\text{on}\quad \partial'C_{r,\, \omega}.
\end{aligned}\right.
\end{equation*}
Then, it follows from the comparison principle that
\[
u_0\geq \Psi \quad \text{in} \quad \overline{C}_{r,\, \omega}.
\]
Hence, by \eqref{top lower bdd} and the fact that $(\omega)_{B_r}(\omega^{-n})_{B_r}^{1/n}\geq 1$ from \eqref{A-1-1/n}, we obtain
\begin{equation}\label{bdd of u_0}
\inf_{B_{r/2}}u_0(\cdot, 0)\geq \inf_{B_{r/2}}\Psi(\cdot, 0)=\kappa_0(\omega)_{B_r}(\omega^{-n})_{B_r}^{1/n}r^2
\geq \kappa_0 r^2.
\end{equation}

Secondly, we control $u_1$. It follows from Theorem \ref{ABP} and \eqref{A-1-1/n} that
\begin{align*}
u_1
&\leq N_0 r^{\frac{n}{n+1}}\left(\int_{\Gamma^+(u_1)}\frac{|\omega(x)-(\omega)_{B_r}|^{n+1}}{\omega(x)^{n}}\, dxdt\right)^{\frac{1}{n+1}}\\
&\leq N_0 r^{\frac{n}{n+1}}\left[r^{n+2}(\omega)_{B_r}(\omega^{-n})_{B_r}^{\frac{1}{n}}\right]^{\frac{1}{n+1}}\left(\frac{1}{\omega(B_r)}\int_{B_r}\frac{|\omega(x)-(\omega)_{B_r}|^{n+1}}{\omega(x)^{n}}\, dx\right)^{\frac{1}{n+1}}\\
&\leq N_0 K_0^{\frac{1}{n+1}}r^2[[\omega]]_{B_r,\, \omega} \leq N_0 \bar N_1^{-1} K_0^{\frac{1}{n+1}}r^2[[\omega]]_{\textup{BMO}(B_{2r},\, \omega)},
\end{align*}
where $N_0=N_0(n, \nu)>0$ is defined in Theorem \ref{ABP}, and $\bar N_1 =\bar N_1(n, 1+\frac{1}{n}, n+1, K_0)>0$ is defined in Lemma \ref{bmo-lemma}. Thus, by the condition of $[[\omega]]_{\textup{BMO}(B_{2r},\, \omega)}\leq \bar \delta$ and the choice of $\bar \delta$ in \eqref{choice of k_0}, it follows that
\begin{equation}\label{bdd of u_1}
u_1\leq \frac{1}{4}\kappa_0r^2\quad \text{in}\quad C_{r,\, \omega}.
\end{equation}

Lastly, to control $u_2$, we apply Theorem \ref{ABP} and   \eqref{density-q} to obtain 
\begin{align} \notag
u_2 &\leq N_0 r^{\frac{n}{n+1}}(1-q)^{\frac{1}{n+1}}\big[\omega(C_{r,\, \omega})\big]^{\frac{1}{n+1}}\qquad  \\ \label{bdd of u_2}
&\leq N_0 (1-q)^{\frac{1}{n+1}}K_0^{\frac{1}{n+1}}r^2,
\end{align}
where we used \eqref{cylinder measure} in the last step.

\smallskip
Combining \eqref{bdd of u_0}, \eqref{bdd of u_1}, and \eqref{bdd of u_2} with \eqref{u-comparison}, we infer that 
\[
\inf_{B_{r/2}}u(\cdot,0)\geq \kappa_0r^2-\frac{1}{4}\kappa_0r^2-N_0(1-q)^{\frac{1}{n+1}}K_0^{\frac{1}{n+1}}r^2.
\]
Now, we choose $q_0=q_0(n, \nu,K_0)\in (0,1)$ sufficiently close to $1$ so that  
\[
N_0(1-q)^{\frac{1}{n+1}}K_0^{\frac{1}{n+1}}\leq \frac{\kappa_0}{4}, \qquad \forall\ q\in [q_0,1].
\] 
Then, by setting $\kappa=\frac{\kappa_0}{2}$, we see that
\[
\inf_{B_{r/2}}u(\cdot, 0)\geq \kappa r^2.  
\]
The proof of the lemma is completed.
\end{proof}

Based on Lemmas \ref{prop-up} and \ref{large density}, we have the following corollary, in which the equation is defined on $Q_{r,\, \omega}$ instead of $C_{r,\, \omega}$. The corollary will be used in the proof of Corollary \ref{I(q) bdd} below.
\begin{corollary}\label{M bdd} Let $\nu,\, K_0,\, \bar \delta$, and $q_0$ be as in Lemma \ref{large density}. Assume that $\mathcal{L} \in \mathbb{L}_{\nu,k}^\infty(K_0)$ for some $k \in \mathbb{N}$, and that $\omega$ satisfies \eqref{omega-BMO-existence}. Suppose that $u\in \mathcal{W}^{2,1}_{n+1}(Q_{r,\, \omega},\, \omega)\cap C(\overline {Q_{r,\,\omega}})$ is a non-negative function satisfying
\[
\mathcal{L} u  \geq 0 \ \text{in}\  Q_{r,\, \omega}, \quad \text{and} \quad  \omega\big(\big\{(x,t)\in C_{r,\, \omega}:\mathcal{L}u(x,t)\geq \omega\big(x)\}\big)\geq q_0\omega(C_{r,\, \omega}).
\]
Then 
\begin{equation}\label{conclu-2}
\inf_{B_{r/2}}u(\cdot, r^2(\omega^{-n})_{B_r}^{1/n})\geq \kappa_1 r^2,
\end{equation}
where $\kappa_1=\kappa_1(n, \nu, K_0)>0$.
\end{corollary}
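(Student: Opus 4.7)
The plan is to chain Lemma \ref{large density} with the Prop-up Lemma (Lemma \ref{prop-up}): first obtain a pointwise lower bound on $B_{r/2}$ at the middle time $t=0$ from the density hypothesis, then propagate that bound forward in time to $t = r^2(\omega^{-n})_{B_r}^{1/n}$ using the Harnack-type prop-up estimate. No new machinery beyond the two previous lemmas is required.

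First, I would restrict attention to the sub-cylinder $C_{r,\omega} \subset Q_{r,\omega}$. The assumption $\mathcal{L}u \geq 0$ in $Q_{r,\omega}$ gives in particular $\mathcal{L}u \geq 0$ in $C_{r,\omega}$, and the density hypothesis on $\{\mathcal{L}u \geq \omega\}\cap C_{r,\omega}$ together with $[[\omega]]_{\mathrm{BMO}(B_{2r},\,\omega)} \leq \bar\delta$ match exactly the hypotheses of Lemma \ref{large density}. Applying that lemma yields
\[
\inf_{B_{r/2}} u(\cdot, 0) \geq \kappa\, r^2,
\]
with $\kappa = \kappa(n,\nu,K_0) > 0$ furnished by Lemma \ref{large density}.

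Second, I would invoke Lemma \ref{prop-up} with $Y=(0,0)$, $x_0=0$, and $\rho = r/2$. Since $\bar\delta \leq \delta_2$ by the very definition of $\bar\delta$ in \eqref{choice of k_0}, the required BMO smallness $[[\omega]]_{\mathrm{BMO}(B_{2r},\,\omega)} \leq \delta_2$ holds for free. For every $t_0 \in \bigl(-r^2(\omega^{-n})_{B_r}^{1/n},\, 0\bigr)$ this gives
\[
\inf_{B_{r/2}} u\bigl(\cdot,\, r^2(\omega^{-n})_{B_r}^{1/n}\bigr) \geq 4^{-\gamma}\,\inf_{B_{r/2}} u(\cdot, t_0),
\]
with $\gamma = \gamma(n,\nu,K_0)$ from Lemma \ref{prop-up}.

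Finally, because $u \in C(\overline{Q}_{r,\omega})$ and $\overline{B}_{r/2}$ is compact, $x \mapsto u(x,t_0)$ converges uniformly on $\overline{B}_{r/2}$ to $u(\cdot,0)$ as $t_0 \to 0^-$. Sending $t_0 \to 0^-$ in the previous display and combining with the first step gives
\[
\inf_{B_{r/2}} u\bigl(\cdot,\, r^2(\omega^{-n})_{B_r}^{1/n}\bigr) \geq 4^{-\gamma}\,\kappa\, r^2 =: \kappa_1\, r^2,
\]
establishing \eqref{conclu-2} with $\kappa_1 = \kappa_1(n,\nu,K_0) > 0$. The only subtlety is that the admissible time interval in Lemma \ref{prop-up} is open at $0$, which forces the limit passage in the last step; this is the mildest of obstacles and is dispatched by continuity up to the parabolic boundary.
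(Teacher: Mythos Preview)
Your proposal is correct and follows essentially the same two-step approach as the paper: apply Lemma \ref{large density} on $C_{r,\omega}$ to get the bound at $t=0$, then push it forward to $t=r^2(\omega^{-n})_{B_r}^{1/n}$ via Lemma \ref{prop-up} with $Y=(0,0)$ and $\rho=r/2$, yielding the same constant $\kappa_1=4^{-\gamma}\kappa$. You are in fact slightly more careful than the paper, which applies Lemma \ref{prop-up} directly at $t_0=0$ without comment; your continuity argument sending $t_0\to 0^-$ handles the fact that the admissible $t_0$-interval in Lemma \ref{prop-up} is open at $s=0$.
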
  
 
\begin{proof}
First, by Lemma \ref{large density}, there exists a constant $\kappa=\kappa(n,\nu, K_0)>0$ such that
\[
\inf_{B_{r/2}}u(\cdot,0)\geq \kappa r^2.
\]
Also, under the choice of the $\bar \delta$ in \eqref{choice of k_0} in the proof of Lemma \ref{large density}, we can apply Lemma \ref{prop-up} in which $Y=(0,0)$ and $\rho=\tfrac{1}{2}r$, to get
\[
\inf_{B_{r/2}}u(\cdot,\,  r^2(\omega^{-n})_{B_r}^{1/n})\geq 4^{-\gamma}\kappa r^2,
\]
for $\gamma=\gamma(n,\nu, K_0)$ defined in Lemma \ref{prop-up}.  Thus, by setting 
\begin{equation}\label{kappa-1}
\kappa_1=\kappa_1(n,\nu,K_0)=4^{-\gamma}\kappa,
\end{equation}
we obtain the assertion in the corollary.    
\end{proof}

We also need the following result, which asserts that ``if you win locally in time, then you win globally".
\begin{lemma}\label{local-global} Let $\nu \in (0,1)$ and $K_0 \geq 1$, and let $\mathcal{L} \in \mathbb{L}_{\nu, k}^\infty(K_0)$ for some $k \in \mathbb{N}$ with its corresponding $\omega$. Let $E\subset Q_{r,\omega}$ be an open nonempty set, and let $f,\ g\in L^{n+1}(Q_{r,\, \omega},\, \omega^{-n})$ be non-negative satisfying $g = g \chi_{E}$. Suppose that  $u$, $v \in \mathcal{W}^{2,1}_{n+1}(Q_{r,\, \omega}, \omega)\cap C(\overline{Q}_{r,\, \omega})$ solve the equations
\begin{equation}\label{eqns in lemma}
\left\{
\begin{aligned}
\mathcal{L}u&=f\quad &&\text{in}\quad Q_{r,\, \omega},\\
u&=0\quad &&\text{on}\quad \partial'Q_{r,\, \omega},
\end{aligned}\right.
\quad \text{and}\quad
\left\{
\begin{aligned}
\mathcal{L}v&=g \quad &&\text{in}\quad Q_{r,\, \omega},\\
v&=0\quad &&\text{on}\quad \partial'Q_{r,\, \omega}.
\end{aligned}\right.
\end{equation}
Assume that for each point $X_0=(x_0,t_0)\in E$, there exists an open cylinder $U\subset Q_{r,\, \omega}$ with $X_0\in U$ such that if $u_1$, $v_1\in \mathcal{W}^{2,1}_{n+1}(U,\, \omega)\cap C(\overline U)$ solve
\begin{equation}\label{local eqns}
\left\{
\begin{aligned}
\mathcal{L}u_1&=f\ &&\text{in}\  U,\\
u_1&=0\ &&\text{on}\ \partial'U,
\end{aligned}\right.
\quad\text{and}\quad
\left\{
\begin{aligned}
\mathcal{L}v_1&=g\ &&\text{in}\ U,\\
v_1&=0\ &&\text{on}\ \partial'U,
\end{aligned}\right.
\end{equation}
we have 
\begin{equation} \label{u-1-v-1-1-28}
u_1(x_0,t_0)\geq v_1(x_0,t_0). 
\end{equation}
Then,
\[
u\geq v\quad \text{in}\quad Q_{r,\, \omega}.
\]
\end{lemma}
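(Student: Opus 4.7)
The plan is to set $w := u - v$ and to establish $w \geq 0$ on $Q_{r,\omega}$ by an earliest-minimum contradiction that combines the local hypothesis at points of $E$ with the parabolic strong minimum principle at points of $E^{c}$. By linearity, $w \in \mathcal{W}^{2,1}_{n+1}(Q_{r,\omega}, \omega) \cap C(\overline{Q}_{r,\omega})$ solves $\mathcal{L} w = f - g$ in $Q_{r,\omega}$ with $w = 0$ on $\partial' Q_{r,\omega}$. I would argue by contradiction: assume $m := \min_{\overline{Q}_{r,\omega}} w < 0$ and define the earliest-minimum time $t^{*} := \inf\{t : w(x,t) = m \text{ for some } x\}$. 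By continuity and compactness the infimum is attained at some $X_0 = (x^{*}, t^{*})$; since $w = 0$ on $\partial' Q_{r,\omega}$, we have $X_0 \notin \partial' Q_{r,\omega}$, while $w > m$ strictly for every $t < t^{*}$.

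The main step treats $X_0 \in E$. Invoking the hypothesis I would obtain an open parabolic cylinder $U = B \times (a, b) \subset Q_{r,\omega}$ with $X_0 \in U$ (so $a < t^{*} < b$) and local solutions $u_1, v_1$ of \eqref{local eqns} satisfying $u_1(X_0) \geq v_1(X_0)$. Setting $\tilde w := (u - u_1) - (v - v_1)$, a direct computation gives $\mathcal{L} \tilde w = 0$ in $U$ and $\tilde w|_{\partial' U} = w|_{\partial' U}$. The weak minimum principle for $\tilde w$ (equivalently Theorem~\ref{ABP} applied to $-\tilde w$) yields $\tilde w \geq \min_{\partial' U} w \geq m$ on $\overline{U}$, while the decomposition $w = (u_1 - v_1) + \tilde w$ combined with the hypothesis forces $\tilde w(X_0) = m - [u_1(X_0) - v_1(X_0)] \leq m$. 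Thus $\tilde w$ attains the interior minimum value $m$ at $X_0$, and the strong parabolic minimum principle for $\mathcal{L} \tilde w = 0$ propagates this backward: $\tilde w \equiv m$ on $B \times (a, t^{*}]$. Letting $t \downarrow a$ and using $\tilde w = w$ on the bottom of $U$ gives $w(\cdot, a) \equiv m$ on $B$, contradicting the strict inequality $w > m$ at the time $a < t^{*}$.

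If instead $X_0 \in E^{c}$, the same strategy applies directly to $w$: when $X_0 \in (E^{c})^{\circ}$ I would pick a small parabolic cylinder $V \subset (E^{c})^{\circ} \cap Q_{r,\omega}$ around $X_0$, on which $g \equiv 0$ and hence $\mathcal{L} w = f \geq 0$, so that the strong parabolic minimum principle applied to $w$ itself delivers the same contradiction by backward propagation to the bottom of $V$. The delicate subcase is $X_0 \in \partial E$, where no parabolic cylinder around $X_0$ fits entirely in $E^{c}$ and the hypothesis is unavailable at $X_0$. I plan to address it first by trying to replace $x^{*}$ by another minimum point of $w(\cdot, t^{*})$ lying in $E$ or $(E^{c})^{\circ}$, reducing to one of the previous arguments; in the residual case I would invoke the inequality $w(X) \geq \min_{\partial' U(X)} w$ obtained in the main step at points $X_n \in E$ converging to $X_0$ and pass to the limit via continuity of $w$, or alternatively perturb $w$ by $\varepsilon \phi$ with $\phi > 0$ of the right sign and send $\varepsilon \to 0^{+}$. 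The hard part will be precisely this $\partial E$ subcase, since the strong minimum principle requires $\mathcal{L} w$ to have a definite sign on an open neighborhood of $X_0$, a property that fails on any neighborhood intersecting $E$; the structural heart of the proof is nonetheless the decomposition $w = (u_1 - v_1) + \tilde w$ in the main step, which converts the pointwise local inequality $u_1(X_0) \geq v_1(X_0)$ into an interior extremum of an $\mathcal{L}$-harmonic function.
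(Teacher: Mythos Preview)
Your decomposition $w = (u_1 - v_1) + \tilde w$ with $\mathcal{L}\tilde w = 0$ in $U$ is exactly the structural heart of the paper's argument, and your treatment of the case $X_0 \in E$ is essentially correct. The genuine gap is the $\partial E$ subcase, which you yourself flag as unresolved. None of your proposed fixes closes it: replacing $x^{*}$ by another minimizer fails if the entire minimum set at time $t^{*}$ sits on $\partial E$; passing to the limit along $X_n \in E$ fails because the cylinders $U(X_n)$ need not have heights bounded away from zero; and a generic perturbation $w + \varepsilon\phi$ does not by itself force the minimizer off $\partial E$.

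The paper eliminates the $\partial E$ case entirely by two reductions carried out \emph{before} the contradiction argument. First, it replaces $u$ by $u + \tau_0\big(t + r^2(\omega^{-n})_{B_r}^{1/n}\big)$ and lets $\tau_0 \to 0^{+}$ at the end, so one may assume $f \geq \tau_0 > 0$. Second, it approximates $E$ from inside by closed sets $E_k$ and works with $g\chi_{E_k}$; since $E_k \subset E$ the local hypothesis is still available at every point of $E_k$, and the corresponding $v_k$ converge uniformly to $v$ by ABP. With $E$ now closed, $Q_{r,\omega}\setminus E$ is open, and on it $\mathcal{L}\big[v-(1+\sigma)u\big] = -(1+\sigma)f \leq -(1+\sigma)\tau_0 < 0$, so the maximum of $v-(1+\sigma)u$ cannot lie there by the weak maximum principle. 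This forces the extremal point into $E$, where your (and the paper's) local argument applies. The paper then closes with a strict inequality $M < M$ using $u_1(X_0) > 0$ (from $f > 0$ and the strong maximum principle) and the factor $(1+\sigma)$, avoiding your earliest-time and backward-propagation machinery altogether.
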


\begin{proof} We modify the approach used in \cite[Lemma 4, p. 171]{Krylov-book}. By replacing $u$ by $u + \tau_0[t + r^2(\omega^{-n})_{B_r}^{1/n}]$ and then sending $\tau_0 \rightarrow 0^+$, it is sufficient to prove the lemma under the assumption that $f \geq \tau_0>0$ with some $\tau_0>0$. Note also that by letting a sequence of closed increasing subsets $\{E_{k}\}$ of $E$ that converges to $E$ and let $v_k \in  \mathcal{W}^{2,1}_{n+1}(Q_{r,\omega}, \omega) \cap C(\overline{Q}_{r,\omega})$ be solutions to the equation
\[
\left\{
\begin{aligned}
\mathcal{L}v_k&=g \chi_{E_k}\quad &&\text{in}\quad Q_{r,\, \omega},\\
v_k&=0\quad &&\text{on}\quad \partial'Q_{r,\, \omega}
\end{aligned} \right.
\]
we see from Theorem \ref{ABP} that $v_k \leq v$ and $v_k \rightarrow v$ uniformly in $\overline{Q}_{r,\omega}$ as $k \rightarrow \infty$. We also note that for the given $\mathcal{L}, E, f$, if $g$ is a function such that the conclusion \eqref{u-1-v-1-1-28} holds then the same also holds with $g \chi_{E_k}$ due to the maximum principle. Hence, instead of working on $v_k$, $E_k$ and $g\chi_{E_k}$, it is sufficient to prove the theorem under the assumption that $E$ is closed and 
\begin{equation}\label{equiv claim}
(1+\sigma)u>v \quad \text{in}\quad Q_{r,\, \omega},\quad \text{for all}\quad \sigma>0.
\end{equation}

To prove \eqref{equiv claim}, we use a contradiction argument. Suppose that \eqref{equiv claim} is not true; then there exists $\sigma_0>0$ such that 
\[
M = \max_{\overline{Q}_{r,\, \omega}}\, [v-(1+\sigma_0)u]>0.
\]
Let us denote by $X_0= (x_0, t_0) \in \overline{Q}_{r,\, \omega}$ a point such that 
\begin{equation}\label{max value}
[v-(1+\sigma_0)u](X_0)= M >0.
\end{equation}
As $u=v=0$ on $\partial'Q_{r,\, \omega}$,  we see that $X_0 \notin \partial' Q_{r,\omega}$. On the other hand, as
\[
\mathcal{L}[v-(1+\sigma_0)u]=-(1+\sigma_0)f \leq -(1+\sigma_0) \tau_0  <0 \quad \text{in } Q_{r,\, \omega}\setminus E,
\]
and $Q_{r,\, \omega}\setminus E$ is open as $E$ is closed, it follows from the proof of the maximum principle that $X_0 \notin  Q_{r,\, \omega}\setminus E$. Thus, $X_0=(x_0, t_0)\in E$.

\smallskip
By the assumption in the lemma, there exists a cylinder $U\subset Q_{r,\, \omega}$ with $X_0\in U$
such that 
\begin{equation}\label{inequality 1}
u_{1}(X_0)\geq v_1(X_0),
\end{equation}
where $u_1,\, v_1\in \mathcal{W}^{2,1}_{n+1}(U,\, \omega)\cap C(\overline{U})$ are the solutions to equations \eqref{local eqns}.  Observe also as $f \geq \tau_0>0$, it follows from the strong maximum principle that
\begin{equation} \label{u-1-X-zero-1-2-26}
u_1(X_0) >0.
\end{equation} 

Now, let $w = M + (1+\sigma_0)(u-u_1)$. From \eqref{eqns in lemma} and \eqref{local eqns}, we see that
\[
\mathcal{L} w = 0 \quad \text{and} \quad \mathcal{L}(v-v_1)=0 \quad \text{in }  \ U,
\]
and
\[
v - v_1 = v \leq  w \quad \text{on} \quad \partial' U.
\]
Hence, it follows from the maximum principle that
\[
v - v_1 \leq w \quad \text{in} \quad U.
\]
Therefore,
\[
v(X_0) \leq v_1(X_0) + w(X_0)  = v_1(X_0) + M + (1+\sigma_0) u(X_0)  - (1+\sigma_0) u_1(X_0).
\]
From this, \eqref{inequality 1}, and \eqref{u-1-X-zero-1-2-26}, we infer that
\begin{align*}
M = v(X_0) - (1+\sigma_0) u(X_0) & \leq v_1(X_0)  - (1+\sigma_0) u_1(X_0) + M \\
& < v_1(X_0) - u_1(X_0) + M \leq M.
\end{align*}
Hence, we obtain a contradiction. The proof is completed.
\end{proof}

\subsection{An auxiliary function} \smallskip  For $r>0$, we recall that 
\begin{align*}
&C_{r,\, \omega}=B_r\times \big(-r^{2}(\omega^{-n})_{B_r}^{1/n},\, 0\big),\qquad\text{and}\\[2pt]
&Q_{r,\, \omega}=B_r\times \big(-r^{2}(\omega^{-n})_{B_r}^{1/n},\, r^{2}(\omega^{-n})_{B_r}^{1/n}\big).
\end{align*}

\begin{definition}\label{auxiliary function} Let $\nu \in (0,1)$ and $K_0 \in [1, \infty)$, and  let $\mathcal{L} \in \mathbb{L}_{\nu, k}^\infty(K_0)$ for some $k\in \mathbb{N}$ with its corresponding weight $\omega \in A_{1+\frac{1}{n}}$. For each $r>0$, define the function $I : [0,1] \rightarrow [0, \infty)$ by
\begin{align*}
I(q) =\inf \Big\{ & \inf_{x\in B_{r/2}} u\big(x,\, r^{2}(\omega^{-n})_{B_r}^{1/n}\big):\ u\in \mathcal{W}^{2,1}_{n+1}(Q_{r,\, \omega},\,\omega) \cap\, C(\overline{Q}_{r,\, \omega}), \ u\geq 0,  \\  
& \qquad \qquad \mathcal{L}u\ge 0 \ \text{in}\ Q_{r,\, \omega},\ \text{and} \quad \omega\!\left(\big\{ C_{r,\,\omega}:
\mathcal{L}u\ge \omega \big\}\right)
\ge q\,\omega(C_{r,\, \omega}) \Big\},
\end{align*}
for $q\in [0,1]$.
\end{definition}
\noindent
We note that $I(q)$ also depends on $\mathcal{L}$ and $r$. However, we suppress those dependent variables for notational simplicity. Obviously, $I(q)$ is increasing in $q\in [0,1]$. The main goal of this subsection is to derive a lower bound of $I(q)$ uniformly with respect to $\mathcal{L}$, which is stated and proved in Corollary \ref{I(q) bdd} below.

\smallskip
For settings, with the given numbers $\nu \in (0,1)$ and $K_0 \in [1, \infty)$, let 
\begin{equation}\label{def q_0}
q_0=q_0(n,\nu, K_0)\in (0,1)
\end{equation}
be the constant defined in Lemma \ref{large density}. We can then choose a constant 
\begin{equation}\label{def eta_0}
\eta_0 = \eta_0(n,\nu, K_0)\in (0,1)
\end{equation}
sufficiently close to $1$ so that the constant $\xi_0$, defined in \eqref{hat-E-E} with $q_0$ in place of $q$ and $\eta_0$ in place of $\eta$, satisfies
\begin{equation}\label{def xi_0}
\xi_0  = \xi_0 (n, \nu, K_0) =1+\frac{(1-q_0)}{2}3^{-n-2}K_0^{-1}\in (1,2).
\end{equation}
We further define
\begin{equation}\label{definition l_0}
l_0 = l_0 (n, \nu, K_0) =\frac{3\xi_0+1}{\xi_0-1} >1.
\end{equation}
Note that
\begin{equation}\label{def l_0}
\xi_0\xi_1=\frac{1+\xi_0}{2}, \quad \text{where}\quad \xi_1(l_0)=\frac{l_0-1}{l_0+1}.
\end{equation}
Given a measurable set $\Gamma\subset C_{r,\, \omega}$, using the definition \eqref{set E} with $q= q_0$, $\eta= \eta_0$, and $l = l_0$, we obtain the following sets
\begin{equation}\label{def E_0}
\tilde E_0=\tilde E(q_0, \eta_0), \quad \text{and}\quad \hat E_0=\hat E(q_0, \eta_0, l_0).
\end{equation}
Moreover, by Lemma \ref{covering-1} and \eqref{def l_0}, it follows that
\begin{equation}\label{mea hat E}
\omega(\hat E_0)\geq \xi_0\xi_1\omega(\Gamma)=\frac{1+\xi_0}{2}\omega(\Gamma).
\end{equation}

We begin with the following lemma, providing an initial step in estimating $I(q)$.

\begin{lemma}\label{alternative lemma}
Let $\nu \in (0,1)$ and $K_0\in [1,\infty)$. There exist constants $\delta_0=\delta_0(n,\nu, K_0)\in (0,1)$, $\xi=\xi(n,\nu, K_0)\in (0,1)$, $N_1 = N_1(n, \nu, K_0)>0$, and $N_2 = N_2(n, \nu, K_0)>1$ such that the following assertions hold. Suppose that $\mathcal{L} \in \mathbb{L}_{\nu, k}^\infty(K_0)$ for some $k \in \mathbb{N}$, and its corresponding weight $\omega \in A_{1+\frac{1}{n}}$ satisfies
\[
[[\omega]]_{\textup{BMO}(B_{2r}, \omega)}\leq \delta_0.
\]
Suppose also that $u\in \mathcal{W}^{2,1}_{n+1}(Q_{r,\, \omega},\, \omega)\cap C(\overline{Q_{r,\, \omega}})$ is non-negative and satisfies $\mathcal{L}u\geq 0$ in $Q_{r,\, \omega}$, and that
\begin{equation}\label{q density}
\omega(\Gamma)\geq q\omega(C_{r,\, \omega})\quad \text{with}\quad \Gamma=\{\mathcal{L}u\geq \omega\}\cap C_{r,\, \omega},
\end{equation}
for some $q \in (0,1)$. Corresponding to $\Gamma$, let $\hat E_0 = \hat{E}_0(q_0, \eta_0, l_0)$ be the set defined in \eqref{def E_0}. Then, we have the following alternatives.
\begin{itemize}
\item[\textup{(i)}] If $\omega(\hat E_0\setminus C_{r,\, \omega})> \xi q\omega(C_{r,\, \omega})$, then
\begin{equation} \label{big mea}
\inf_{B_{r/2}}u\big(\ \cdot,\ r^2(\omega^{-n})_{B_r}^{1/n}\big)\geq N_1\xi^{\gamma+2} q^{\gamma+2}r^2,
\end{equation}
where $\gamma=\gamma(n,\nu, K_0)>0$ is the constant defined in Lemma \ref{prop-up}.

\item[\textup{(ii)}]  If $\omega(\hat E_0\setminus C_{r,\, \omega})\leq \xi q\omega(C_{r,\, \omega})$, then $(1+\xi) q \in (0, 1]$ and
\begin{equation} \label{small mea}
\inf_{B_{r/2}}u\big(\ \cdot,\ r^2(\omega^{-n})_{B_r}^{1/n}\big)\geq I\left((1+\xi) q\right)/N_2.
\end{equation}
\end{itemize}
\end{lemma}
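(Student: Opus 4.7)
The plan is to feed the set $\Gamma$ into the weighted Krylov--Safonov covering lemma (Lemma \ref{covering-1}) with the parameters $q_0,\eta_0,l_0$ fixed in \eqref{def q_0}--\eqref{definition l_0}, producing a family $\mathcal{A}$ of sub-cylinders $C_{\rho,\omega}(X)$ on which $\Gamma$ has density at least $q_0$, together with the derived union $\hat E_0$ satisfying $\omega(\hat E_0)\geq \tfrac{1+\xi_0}{2}\,q\,\omega(C_{r,\omega})$ by \eqref{mea hat E} and \eqref{q density}. I would choose $\delta_0\leq\min\{\bar\delta,\delta_2\}$ (with $\bar\delta$ from Lemma \ref{large density} and $\delta_2$ from Lemma \ref{prop-up}) so that both Corollary \ref{M bdd} and the prop-up Lemma \ref{prop-up} are available. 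Then for each $X=(x,s)\in\mathcal{A}$, Corollary \ref{M bdd} gives
\[
u(\cdot,\, s+\rho^{2}(\omega^{-n})_{B_\rho(x)}^{1/n})\geq \kappa_{1}\rho^{2}\quad\text{on } B_{\rho/2}(x),
\]
and iterated applications of Lemma \ref{prop-up} propagate this into the associated $\hat C_{\rho,\omega}(X)$ with a controlled loss.

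For part (i), the hypothesis $\omega(\hat E_0\setminus C_{r,\omega})>\xi q\,\omega(C_{r,\omega})$ forces, via a packing/volume estimate based on \eqref{cylinder measure} and the observation that $\hat C_{\rho,\omega}(X)$ can overflow past $\{t=0\}$ only when its parent radius is comparable to the requisite time offset, the existence of a selected sub-cylinder $C_{\rho_*,\omega}(X_*)\in\mathcal{A}$ with $\rho_*\gtrsim \xi q\,r$. A single further application of Lemma \ref{prop-up} on a cylinder containing both the anchor ball $B_{\rho_*/2}(x_*)$ at the top of $C_{\rho_*,\omega}(X_*)$ and the target slice then propagates the base bound $\kappa_1\rho_*^{2}$ to $B_{r/2}\times\{r^{2}(\omega^{-n})_{B_r}^{1/n}\}$ with an additional loss of $(\rho_*/r)^\gamma$, producing a lower bound of order $\kappa_1(\rho_*/r)^\gamma \rho_*^{2}\gtrsim (\xi q)^{\gamma+2}r^{2}$, which is precisely \eqref{big mea} after absorbing constants into $N_1$.

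For part (ii), the complementary hypothesis combined with the bound on $\omega(\hat E_0)$ yields $\omega(\hat E_0\cap C_{r,\omega})\geq (\tfrac{1+\xi_0}{2}-\xi)q\,\omega(C_{r,\omega})$. Choosing $\xi$ so that $\xi<\tfrac{1+\xi_0}{2}-\xi$ and accounting for the (time-shifted) overlap between $\Gamma$ and $\hat E_0\cap C_{r,\omega}$, the enlarged set $E=\Gamma\cup(\hat E_0\cap C_{r,\omega})$ has weighted measure at least $(1+\xi)q\,\omega(C_{r,\omega})$, so $(1+\xi)q\in(0,1]$. Let $v\in\mathcal{W}^{2,1}_{n+1}(Q_{r,\omega},\omega)\cap C(\overline Q_{r,\omega})$ solve $\mathcal{L}v=\omega\chi_E$ in $Q_{r,\omega}$ with $v=0$ on $\partial'Q_{r,\omega}$; then $v$ lies in the competition class defining $I((1+\xi)q)$, so $v\geq I((1+\xi)q)$ on the target slice. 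The inequality $N_{2}u\geq v$ throughout $Q_{r,\omega}$ is then established via Lemma \ref{local-global}: on points of $\Gamma$ the local comparison is immediate from $\mathcal{L}u\geq\omega$ and the maximum principle; on $\hat E_{0}\cap C_{r,\omega}\setminus\Gamma$ it follows from the propagated lower bound $u\gtrsim\kappa_1\rho^{2}$ at the parent scale together with the parabolic ABP estimate $v_{1}\leq N\rho^{2}$ on a local cylinder of matching size $\rho$, with $N_{2}$ absorbing the ABP constant and the accumulated propagation factors.

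The main obstacle is the local comparison in part (ii): the local cylinders attached to points of $\hat E_{0}\cap C_{r,\omega}$ must be calibrated with sizes comparable to their parent radii $\rho$, so that both the ABP bound on $v_{1}$ and the propagated lower bound on $u$ scale like $\rho^{2}$ and can be matched by a single $N_{2}=N_{2}(n,\nu,K_{0})$ uniform in the varying parent radii; executing this requires careful bookkeeping of the prop-up loss factors and of the bmo-smallness absorbed through Lemma \ref{prop-up}. A secondary technical step is the volume/packing estimate $\rho_*\gtrsim\xi q\,r$ needed in part (i), which hinges on the weighted measure comparison \eqref{cylinder measure} and the geometric constraint that a hat-cylinder reaches past $\{t=0\}$ only through a sufficient time offset proportional to $\rho^{2}(\omega^{-n})_{B_\rho(x)}^{1/n}$.
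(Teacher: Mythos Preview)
Your outline for part (i) matches the paper's argument: the overflow hypothesis forces, by contradiction, the existence of some $C_{\rho_*,\omega}(\tilde X)\in\mathcal{A}$ with $\rho_*\geq \tfrac{\xi q}{l_0}r$ (if every parent radius were smaller, every $\hat C$ would sit below the level $t=\xi q\,r^2(\omega^{-n})_{B_r}^{1/n}$), after which Lemma~\ref{large density} on that parent followed by one application of Lemma~\ref{prop-up} gives \eqref{big mea}.

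Part (ii), however, has a genuine gap in how you invoke Lemma~\ref{local-global}. That lemma compares two functions each of which \emph{solves an equation} $\mathcal{L}u=f$, $\mathcal{L}v=g$ with zero parabolic boundary data, and its local hypothesis concerns the local solutions $u_1,v_1$ of those same equations on a subcylinder $U$, not the values of $u$ itself. Your $u$ is only a nonnegative supersolution with unspecified boundary data, so it does not fit the hypotheses; and a pointwise bound $u\gtrsim\kappa_1\rho^2$ says nothing about the local solution $u_1$. The paper's remedy is to pass to an intermediate: let $\bar u$ solve $\mathcal{L}\bar u=\omega\chi_\Gamma$ and $\bar v$ solve $\mathcal{L}\bar v=\omega\chi_{\Gamma_0}$ with $\Gamma_0=\hat E_0\cap C_{r,\omega}$ alone (no union with $\Gamma$; the covering lemma already gives $\omega(\Gamma_0)\geq(1+\xi)q\,\omega(C_{r,\omega})$ once $\xi=(\xi_0-1)/4$), both with zero data on $\partial'Q_{r,\omega}$. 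Then $u\geq\bar u$ by comparison and $\bar v$ competes in the definition of $I((1+\xi)q)$. Lemma~\ref{local-global} is applied to the pair $(N_2\bar u,\bar v)$: for each $X_0\in\Gamma_0$ with parent $C_{\rho_0,\omega}(\bar X)\in\mathcal{A}$, one takes $U=B_{\rho_0}(\bar x)\times(\bar t-\rho_0^2(\omega^{-n})_{B_{\rho_0}(\bar x)}^{1/n},\,\bar t+l_0\rho_0^2(\omega^{-n})_{B_{\rho_0}(\bar x)}^{1/n})$; the local $u_1$ (solving $\mathcal{L}u_1=N_2\omega\chi_\Gamma$ on $U$) is bounded below at $X_0$ by applying Lemma~\ref{large density} to $u_1$ itself on the parent (which carries $\Gamma$-density $\geq q_0$) and then Lemma~\ref{baby prop-up} on a slant cylinder reaching $X_0$, while $v_1$ is bounded above by the explicit barrier $\tfrac{1}{2n\nu}(\rho_0^2-|x-\bar x|^2)$. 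The essential point you are missing is that the lower bound must be manufactured for the \emph{local} solution $u_1$ from its source $N_2\omega\chi_\Gamma$ and the density of $\Gamma$ in the parent, not transferred from $u$; this also explains why $\delta_0$ must additionally be taken $\leq\bar N_1\delta_1$ so that the slant prop-up is available.
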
    

\begin{proof}
For the given $\nu \in (0,1)$  and $K_0\in [1,\infty)$,  let  $q_0,\, \eta_0,\, \xi_0$, $l_0$ be the numbers defined in \eqref{def q_0}, \eqref{def eta_0}, \eqref{def xi_0}, and \eqref{definition l_0}, respectively. Then, let $\delta_0$ and $\xi$ be defined by
\begin{equation}\label{def xi}
\delta_0 =\min\left\{\bar N_1\delta_1,\, \delta_2,\, \bar \delta\right\}\in (0,1), \quad\text{and} \quad \xi=\frac{\xi_0-1}{4}\in (0,1),
\end{equation}
where $\bar N_1=\bar N_1(n, 1+\frac{1}{n}, n+1, K_0)$, $\delta_1= \delta_1(n, \nu, K_0, \frac{4l_0}{(1-\eta_0)^2})$, $\delta_2= \delta_2(n, \nu, K_0)$, and $\bar \delta=\bar \delta(n, \nu, K_0)$ are the constants defined in Lemma \ref{bmo-lemma}, Lemma \ref{baby prop-up}, Lemma \ref{prop-up}, and Lemma \ref{large density}, respectively. It is worth mentioning that $\delta_0$ and $\xi$ depend only on $n,\, \nu$, and $ K_0$. We now prove the lemma with the choices of $\delta_0$ and $\xi$ as in \eqref{def xi}.

\smallskip
\noindent
\smallskip
\textbf{Proof of (i)}. Let us denote
\[
r_0=\frac{\xi q}{l_0}r\in (0,r).
\]
We claim that there exist 
\begin{equation}\label{claim cylinder}
\rho_1\geq r_0 \quad  \text{and}\quad \tilde X=(\tilde x, \tilde t )\in C_{r,\, \omega} \quad \text{such that}\quad C_{\rho_1,\, \omega}(\tilde X)\in \mathcal{A}.
\end{equation}
Suppose that this is not true; then for all $C_{\rho,\, \omega}(X)\in \mathcal{A}$ with $X=(x,t)$, we have
$\rho< r_0$. By the definitions  of $\mathcal{A}$ in \eqref{cylinders set} and $\hat C$ in \eqref{hat cylinder}, we find $C_{\rho,\, \omega}(X)\subset C_{r,\, \omega}$ and
\[
\hat C_{\rho,\, \omega}(X)=B_{\eta_0 \rho}(x)\times\big(t+\rho^2(\omega^{-n})_{B_{\rho}(x)}^{1/n},\, t+l_0\rho^2(\omega^{-n})_{B_{\rho}(x)}^{1/n}\big).
\]
Since $t\leq 0$ and $B_{\rho}(x)\subset B_r$, by a simple calculation, we have
\[
t+l_0\rho^2(\omega^{-n})_{B_{\rho}(x)}^{1/n}\leq l_0\rho r(\omega^{-n})_{B_r}^{1/n}\leq\xi qr^2(\omega^{-n})_{B_r}^{1/n}.
\]
This, together with the fact that $t>-r^2(\omega^{-n})_{B_r}^{1/n}$, implies that
\[
\hat E=\bigcup_{C\in \A}\hat C\subset B_{r}\times \left(-r^2(\omega^{-n})_{B_r}^{1/n},\ \xi qr^2(\omega^{-n})_{B_r}^{1/n}\right).
\]
Hence, 
\[
\omega\big(\hat E\setminus C_{r,\, \omega}\big)\leq \xi qr^2(\omega^{-n})_{B_r}^{1/n}\omega(B_r)=\xi q\omega(C_{r,\, \omega}),
\]
which contradicts the assumption of \eqref{big mea}. The claim \eqref{claim cylinder} is proved.

\smallskip
Next, for this $C_{\rho_1,\, \omega}(\tilde X)$, by taking a linear translation and applying Lemma \ref{large density} to $u$, we have
\[
\inf_{B_{{\rho_1}/2}(\tilde x)}u(\cdot, \tilde t)\geq \kappa \rho_1^2,\quad \text{where}\quad \kappa=\kappa(n,\nu, K_0)>0.
\] 
Recall that $\rho_1\geq r_0=\frac{\xi q}{l_0}r$, it follows from the previous estimate and Lemma \ref{prop-up} that
\begin{align*}
\inf_{B_{r/2}}u(\ \cdot,\ r^2(\omega^{-n})_{B_r}^{1/n})\geq \big(\frac{\rho_1}{4r}\big)^{\gamma}\kappa \rho_1^2
&\geq (4^{-\gamma}l_0^{-\gamma-2}\kappa) (\xi q)^{\gamma+2}r^2\\
&=N_1(n,\nu, K_0)(\xi q)^{\gamma+2}r^2.
\end{align*}

\smallskip
\noindent
\textbf{Proof of (ii)}.  We denote 
\[
\Gamma_0=\hat E_0\cap C_{r,\, \omega},
\]
and it then follows from \eqref{mea hat E}, \eqref{q density}, the assumption that $\omega(\hat E_0\setminus C_{r,\, \omega})\leq \xi q\omega(C_{r,\, \omega})$ and the definition of $\xi$ in \eqref{def xi} that
\begin{equation}\label{mea Gamma_0}
\begin{aligned}
\omega(\Gamma_0)=\omega(\hat E_0)-\omega(\hat E_0\setminus C_{r,\, \omega})
&\geq \frac{1+\xi_0}{2}q\omega(C_{r,\, \omega})-\frac{\xi_0-1}{4}q\omega(C_{r,\, \omega})\\
&=(1+\xi)q\omega(C_{r,\, \omega}).
\end{aligned}
\end{equation}
This implies that $(1+\xi)q \in (0, 1]$, which particularly confirms that the right hand side of \eqref{small mea} is well-defined. Note that as $\mathcal{L} \in \mathbb{L}^{\infty}_{\nu, k}(K_0)$, there are solutions $\bar u,\ \bar v \in \mathcal{W}^{2,1}_{n+1}(Q_{r,\omega}, \omega) \cap C(\overline{Q}_{r,\, \omega})$ solving the equations 
\begin{equation}\label{two eqns}
\left\{
\begin{aligned}
\mathcal{L}\bar u&=\omega \chi_{\Gamma} &&\text{in}\quad Q_{r,\, \omega},\\
\bar u&=0 &&\text{on}\quad \partial'Q_{r,\, \omega},
\end{aligned}\right.
\quad\text{and}\quad
\left\{
\begin{aligned}
\mathcal{L}\bar v&=\omega \chi_{\Gamma_0} &&\text{in}\quad Q_{r,\, \omega},\\
\bar v&=0 &&\text{on}\quad \partial'Q_{r,\, \omega}.
\end{aligned}\right.
\end{equation}
By the comparison principle, the definition of function $I$ and \eqref{mea Gamma_0}, we have
\begin{equation}\label{comp}
u\geq \bar u\quad \text{in } Q_{r,\, \omega},\quad   \text{and}\quad \inf_{B_{r/2}}\bar v\big(\,\cdot, r^2(\omega^{-n})_{B_r}^{1/n}\big)\geq I\big((1+\xi) q\big).
\end{equation}

Now, we apply Lemma \ref{local-global} to compare $\bar u$ with $\bar v$. To this end, for each $X_0=(x_0,t_0)\in \Gamma_0$, by the construction of $\Gamma_0$, there exists $C_{\rho_0,\, \omega}(\bar{X})\in \A$ with $\rho_0>0$ and $\bar{X}=(\bar {x}, \bar t)$ such that $X_0\in \hat C_{\rho_0,\, \omega}(\bar{X})$.  Next, as $\mathcal{L} \in \mathbb{L}_{\nu, k}^\infty(K_0)$, there are solutions $u_1,\, v_1\in \mathcal{W}^{2,1}_{n+1}(U,\, \omega)\cap C(\overline{U})$  to the equations
\[
\left\{
\begin{aligned}
\mathcal{L} u_1&=N_2\omega\chi_{\Gamma} &&\text{in } U,\\
u_1&=0 &&\text{on } \partial' U,
\end{aligned}\right.
\quad \text{and}\quad
\left\{
\begin{aligned}
\mathcal{L}v_1&=\omega\chi_{\Gamma_0} &&\text{in } U,\\
v_1&=0 &&\text{on }\partial' U,
\end{aligned}\right.
\]
with $U=B_{\rho_0}(\bar x)\times (\bar t-\rho_0^2(\omega^{-n})_{B_{\rho_0}(\bar x)}^{1/n},\, \bar t+l_0\rho_0^2(\omega^{-n})_{B_{\rho_0}(\bar x)}^{1/n})$, where $N_2>0$ is some constant to be determined.

\smallskip
For the equation of $u_1$, since $C_{\rho_0,\, \omega}(\bar{X})\subset U$ and $C_{\rho_0,\, \omega}(\bar{X})\in \A$, we see that 
\[
\omega(\Gamma\cap C_{\rho_0,\, \omega}(\bar{X}))\geq q_0\omega(C_{\rho_0,\,\omega}(\bar{X})).
\]
Then, by rescaling the equation and applying Lemma \ref{large density}, we obtain
\begin{equation}\label{inf u_1}
\inf_{B_{\rho_0/2}(\bar{x})}u_1(\ \cdot \ , \bar t)\geq N_2\kappa \rho_0^2, \quad 
\text{where}\quad  \kappa=\kappa(n, \nu, K_0)>0.
\end{equation}
Next, as $X_0\in \hat C_{\rho_0,\omega}(\bar{X})$, it follows from the construction of $\hat C_{\rho_0,\omega}(\bar{X})$ in \eqref{hat cylinder} that 
\[
|x_0-\bar{x}|\leq \eta_0 \rho_0 \quad \text{and}\quad \rho_0^2(\omega^{-n})_{B_{\rho_0}(\bar{x})}^{1/n}\leq |t_0-\bar t|\leq l_0\rho_0^2(\omega^{-n})_{B_{\rho_0}(\bar{x})}^{1/n}.
\]
Thus, $V_{\frac{(1-\eta_0)\rho_0}{2}}(\bar X, X_0)\subset B_{\rho_0}(\bar x)\times \mathbb{R}$, and it satisfies condition \eqref{slant-cyl} under $K=\frac{4l_0}{(1-\eta_0)^2}$. Due to the choice of $\delta_0$ in \eqref{def xi}, we can apply Lemma \ref{baby prop-up} to $u_1$ on $V_{\frac{(1-\eta_0)\rho_0}{2}}(\bar X, X_0)$, and use the fact that $\eta_0\in (0,1)$ and \eqref{inf u_1}, to obtain
\begin{equation}\label{hat u}
u_1(x_0,t_0)\geq \beta \inf_{B_{(1-\eta_0)\rho_0/2}(\bar x)}u_1(\ \cdot,\, \bar t)\geq \beta \inf_{B_{\rho_0/2}(\bar{x})}u_1(\ \cdot \ , \bar t) \geq N_2\beta\kappa \rho_0^2,
\end{equation}
where $\beta=\beta(n,\nu, K_0)\in (0,1)$.

\smallskip
On the other hand, we control $v_1$ from above. Let
\[
\phi(x,t)=\frac{1}{2n\nu}\left(\rho_0^2-|x-\bar{x}|^2\right).
\]
By a direct computation using \eqref{elliptic}, we see that
\[
\left\{
\begin{aligned}
\mathcal{L}\phi&\geq \omega &&\text{in } C_{\rho_0,\, \omega}(\bar{X}),\\
\phi&\geq 0 &&\text{on }\partial'C_{\rho_0,\, \omega}(\bar{X}).
\end{aligned}\right.
\]
Then, by the comparison principle and the fact that $|x_0-\bar{x}|\leq \eta_0\rho_0$, it follows that
\begin{equation}\label{hat v}
v_1(x_0, t_0)\leq \phi(x_0, t_0)\leq \frac{1}{2n\nu}(1-\eta_0^2)\rho_0^2.
\end{equation}

Combining \eqref{hat u} and \eqref{hat v}, we infer that
\[
u_1(x_0, t_0)\geq \frac{2n\nu N_2\beta\kappa}{1-\eta_0^2}v_1(x_0,t_0).
\]
Then, we can choose the constant
\begin{equation}\label{N_2 choice}
N_2=N_2(n,\nu, K_0)\geq\frac{1-\eta_0^2}{2n\nu\beta\kappa}\quad  \text{so that}\quad u_1(x_0, t_0)\geq v_1(x_0,t_0).
\end{equation}
Hence, by scaling $\bar u\mapsto N_2\bar u$ for the first equation in \eqref{two eqns}, formula \eqref{N_2 choice} implies that the conditions in Lemma \ref{local-global} are satisfied. By Lemma \ref{local-global}, it follows that
\begin{equation}\label{comp-2}
N_2\bar u\geq  \bar v \quad \text{in}\quad C_{r,\, \omega}.
\end{equation}
Therefore, by this and \eqref{comp}, we obtain
\begin{equation*}
\begin{aligned}
\inf_{B_{r/2}}u(\ \cdot,\,  r^2(\omega^{-n})_{B_r}^{1/n})
\geq \inf_{B_{r/2}}\bar u(\ \cdot,\, r^2(\omega^{-n})_{B_r}^{1/n})
&\geq \frac{1}{N_2}\inf_{B_{r/2}}\bar v(\ \cdot,\,  r^2(\omega^{-n})_{B_r}^{1/n})\\
&\geq \frac{1}{N_2}I\big((1+\xi)q\big).
    \end{aligned}
\end{equation*}
The proof of the lemma is completed.
\end{proof}

From Lemma \ref{alternative lemma}, we derive the following important corollary on the lower estimate of the function $I(q)$ defined in Definition \ref{auxiliary function}. The corollary, in fact, provides the proof of Theorem \ref{M-thm} when the coefficients $(a_{ij})$ are smooth.

\begin{corollary}\label{I(q) bdd}  For every $\nu \in (0,1)$ and $K_0 \geq 1$, there exist $N=N(n,\nu,K_0)>0$ and $\gamma_0=\gamma_0(n,\nu, K_0)>2$ such that the following assertion holds. Let $\mathcal{L}\in \mathbb{L}_{\nu, k}^\infty(K_0)$ for some $k \in \mathbb{N}$, and assume that the corresponding $\omega$ of $\mathcal{L}$ satisfies $[[\omega]]_{\textup{BMO}(B_{2r},\, \omega)}\leq \delta_0$, where $\delta_0=\delta_0(n, \nu, K_0)$ is the constant defined in Lemma \ref{alternative lemma}, and $r>0$. Then 
\begin{equation}\label{I-q conclu}
 I(q)\geq Nq^{\gamma_0}r^2,\qquad \forall\, q\in [0,1],
\end{equation}
where $I(q)$ is the function defined in Definition \ref{auxiliary function}.
\end{corollary}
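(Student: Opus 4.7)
The plan is to bootstrap the density parameter $q$ up to the threshold $q_0$ of Corollary \ref{M bdd} by iterating the alternative Lemma \ref{alternative lemma}, and then convert the resulting geometric tower into a polynomial lower bound in $q$.

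For $q\in[q_0,1]$, Corollary \ref{M bdd} applies directly to any admissible $u$ in the definition of $I(q)$, yielding $I(q)\geq \kappa_1 r^2$. Since $q\leq 1$, this absorbs into the form $Nq^{\gamma_0}r^2$ for any $\gamma_0\geq 0$, so the interesting range is $q\in(0,q_0)$.

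For $q\in(0,q_0)$, set $q_k:=(1+\xi)^k q$, where $\xi=\xi(n,\nu,K_0)$ is the constant from Lemma \ref{alternative lemma}, and let $m=m(q)$ denote the smallest positive integer with $q_m\geq q_0$. For each $k=0,\ldots,m-1$ we have $q_k\in(0,q_0)\subset(0,1)$, so Lemma \ref{alternative lemma} applies to any admissible $u$ for $I(q_k)$. Noting that its case (ii) automatically forces $q_{k+1}=(1+\xi)q_k\leq 1$, and taking the infimum over admissible $u$ (which combines both alternatives into a single minimum), one obtains the recursive inequality
\[
I(q_k) \geq \min\!\Big\{\, N_1\, \xi^{\gamma+2}\, q_k^{\gamma+2}\, r^2,\ I(q_{k+1})/N_2\, \Big\},
\]
extended to $q>1$ by monotonicity of $I$ if needed. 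Unrolling from $k=0$ through $k=m-1$ and applying Corollary \ref{M bdd} at the top (since $q_m\geq q_0$) to bound $I(q_m)\geq \kappa_1 r^2$, we arrive at
\[
I(q) \geq \min\!\bigg\{\, \min_{0\leq k\leq m-1} \frac{N_1\,\xi^{\gamma+2}\,q_k^{\gamma+2}\,r^2}{N_2^k},\ \frac{\kappa_1\, r^2}{N_2^m}\, \bigg\}.
\]

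The closing step converts this estimate into the desired polynomial bound. The choice of $m$ gives $m\leq 1+\log(q_0/q)/\log(1+\xi)$, so $N_2^m \leq N_2\,(q/q_0)^{-\gamma_1}$ with $\gamma_1:=\log N_2/\log(1+\xi)>0$. Using $q_k\geq q$ and $N_2^k\leq N_2^m$, each term in the outer minimum above is bounded below by $N\,q^{\gamma+2+\gamma_1}\,r^2$ for a constant $N=N(n,\nu,K_0)>0$, and setting $\gamma_0:=\gamma+2+\gamma_1>2$ completes the argument. The main obstacle I foresee is the bookkeeping in the final step: correctly merging both alternatives of Lemma \ref{alternative lemma} under the infimum defining $I(q_k)$, ensuring monotonicity handles the possibility that $q_m$ marginally exceeds $1$, and converting the exponentially small factor $N_2^{-m}$ into the polynomial $q^{\gamma_1}$ via the logarithmic bound on $m$.
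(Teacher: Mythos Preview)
Your proposal is correct and follows essentially the same approach as the paper: both iterate the recursive inequality coming from Lemma \ref{alternative lemma} and use Corollary \ref{M bdd} at the threshold $q_0$ to terminate. The only cosmetic difference is that the paper runs the iteration \emph{downward} from $q=1$ along the sequence $(1+\xi)^{-i}$ (which sidesteps the need to ever consider $(1+\xi)q>1$), while you run it \emph{upward} from $q$ along $q_k=(1+\xi)^k q$ and handle the possible overshoot $q_m>1$ by a monotone extension; both organizations yield the same polynomial bound (your exponent $\gamma+2+\gamma_1$ is the sum where the paper obtains the maximum, but either works).
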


\begin{proof}
Firstly, by the definition of $I(q)$ in Definition \ref{auxiliary function}, and replacing $q$ by $(1+\xi)^{-1}q$ in \eqref{big mea} and \eqref{small mea} of Lemma \ref{alternative lemma}, we have 
\begin{equation}\label{min I}
 I\Big(\frac{q}{1+\xi}\Big)\geq \min\left\{N_1\left(\frac{\xi}{1+\xi}\right)^{\gamma+2}q^{\gamma+2}r^2,\, \frac{1}{N_2}I(q)\right\}, \quad q\in [0,1],
\end{equation}
where $\xi\in (0,1),\, \gamma>0,\, N_1>0$, and $N_2>1$ are the constants defined in Lemma \ref{alternative lemma}, and they only depend on $n,\, \nu$, and $K_0$. Moreover, it follows from the definition of $I(q)$, the choice of $\delta_0$ in \eqref{def xi}, and Corollary \ref{M bdd} that 
\begin{equation*}
I(q)\geq \kappa_1r^2, \quad \forall \, q\in [q_0, 1],
\end{equation*}
where $\kappa_1=\kappa_1(n,\nu, K_0)>0$, and $q_0=q_0(n,\nu, K_0)\in (0,1)$. Particularly,
\begin{equation}\label{large I}
I(1)\geq \kappa_1r^2.
\end{equation}
Therefore, by \eqref{min I} with $q=1$ and \eqref{large I}, it follows that
\[
I\Big(\frac{1}{1+\xi}\Big)\geq \min\left\{\alpha ,\ \frac{\kappa_1}{N_2}\right\}r^2, \quad \text{where}\quad \alpha=N_1\left(\frac{\xi}{1+\xi}\right)^{\gamma+2}.
\]
Due to this and using \eqref{min I} with $q=(1+\xi)^{-1}$, we get
\begin{align*}
I\left(\frac{1}{(1+\xi)^2}\right)
&\geq \min\left\{\frac{\alpha}{(1+\xi)^{\gamma+2}}, \ \frac{\alpha}{N_2},\ \frac{\kappa_1}{N_2^2}\right\}r^2\\
&\geq \min\left\{\alpha,\ \frac{\kappa_1}{N_2}\right\}\min\left\{\frac{1}{(1+\xi)^{\gamma+2}},\ \frac{1}{N_2}\right\}r^2,
\end{align*}
and iteratively,
\begin{align*}
I\left(\frac{1}{(1+\xi)^i}\right)
&\geq \min\left\{\frac{\alpha}{(1+\xi)^{(i-1)(\gamma+2)}}r^2,\ \frac{1}{N_2}I\left(\frac{1}{(1+\xi)^{i-1}}\right)\right\}\\
&\geq \min\left\{\alpha,\ \frac{\kappa_1}{N_2}\right\}\left(\min\left\{\frac{1}{(1+\xi)^{\gamma+2}},\ \frac{1}{N_2}\right\}\right)^{i-1}r^2, \quad \forall\, i=3, 4, \dots.
\end{align*}
Thus, we conclude that
\begin{equation}\label{discrete esti}
I\left((1+\xi)^{-i}\right)\geq a_0 b_0^{i-1}r^2, \quad \forall\, i\in \mathbb{N},
\end{equation}
with
\[
a_0=\min\left\{\alpha,\, \frac{\kappa_1}{N_2}\right\},\quad \text{and}\quad b_0=\min\left\{\frac{1}{(1+\xi)^{\gamma+2}},\, \frac{1}{N_2}\right\}.
\]

Lastly, since $I(q)$ is non-decreasing in $q\in [0,1]$ and the sequence $\{(1+\xi)^{-i}\}_{i\in \mathbb{N}}$ is strictly decreasing, it follows from \eqref{discrete esti} that 
\[
I(q)\geq a_0q_0^{\log_{1+\xi}b_0}q^{-\log_{1+\xi}b_0}r^2,\quad \forall\, q\in [0,1].
\]
By setting $N=N(n,\nu, K_0)=a_0q_0^{\log_{1+\xi}b_0}$ and $\gamma_0=\gamma_0(n,\nu, K_0)=-\log_{1+\xi}b_0$, and noting that $\gamma_0\geq \gamma+2>2$ from the definition of $b_0$, we achieve \eqref{I-q conclu}. Therefore, the proof of the corollary is completed.
\end{proof}

\subsection{Proof of Theorem \ref{M-thm}.} We are now ready to show the proof of Theorem \ref{M-thm}. 
\begin{proof} Observe that if $\mathcal{L} \in \mathbb{L}_{\nu, k}^\infty(K_0)$, then Theorem \ref{M-thm} follows directly from Corollary \ref{I(q) bdd} and the definition of $I(q)$ in Definition \ref{auxiliary function}. It remains to remove the regularity assumption on the coefficient matrix $(a_{ij})$. 

\smallskip
Assume that $\omega$ is smooth and satisfies \eqref{omega-cond-12-15} for some $k \in \mathbb{N}$. For the given measurable coefficient matrix $(a_{ij})$ satisfying \eqref{elliptic}, by taking the convolution of $a_{ij}$ with the standard mollifiers, we find a sequence $\{a_{ij}^{m}\}_m \in C^{\infty}$ such that 
\begin{equation} \label{a-k-pointwise}
a_{ij}^{m}(x,t) \rightarrow a_{ij}(x,t) \quad \text{for a.e.} \quad (x,t) \in Q_{r,\omega} \quad \text{as} \quad m \rightarrow \infty,
\end{equation}
for every $i, j \in \{1, 2,\ldots, n\}$. Moreover, the matrix $(a_{ij}^m)$ satisfies \eqref{elliptic} for all $m \in \mathbb{N}$. Let us denote
\[
\mathcal{L}^m \phi (x) =  \phi_t -  \omega(x) a_{ij}^m(x,t) D_{ij} \phi.
\]
We note that $\mathcal{L}^m \in \mathbb{L}_{\nu, k}^\infty(K_0)$ for $m \in \mathbb{N}$. Because $u \in \mathcal{W}^{2,1}_{n+1}(Q_{r,\omega}, \omega) \cap C(\overline{Q}_{r,\omega})$, by the Lebesgue dominated convergence theorem, we see that
\begin{equation} \label{convergence-L-regularize}
\mathcal{L}^m u \rightarrow \mathcal{L} u \quad \text{in} \quad L^{n+1}(Q_{r,\omega}, \omega^{-n}) \quad \text{as} \quad m \rightarrow \infty.
\end{equation}

Next, we note that as $\frac{1}{k} \leq \omega  \leq k$, $ L^{n+1}(Q_{r,\omega}) =  L^{n+1}(Q_{r,\omega}, \omega^{-n})$ and therefore $\mathcal{L} u,\, \mathcal{L}^m u \in L^{n+1}(Q_{r,\omega})$. From this and as $\mathcal{L}^m \in \mathbb{L}_{\nu, k}^\infty(K_0)$, it follows from the classical theory that there exists $v_m \in \mathcal{W}^{2,1}_{n+1}(Q_{r,\omega}, \omega) \cap C(\overline{Q}_{r,\omega})$ solving the equation
\begin{equation} \label{v-k-eqn-12-16}
\left\{
\begin{aligned}
\mathcal{L}^m v_m & =  \mathcal{L} u  \quad &&\text{in} \quad Q_{r,\omega},\\
v_m & =  u \quad &&\text{on} \quad \partial' Q_{r,\omega}.
\end{aligned}
\right.
\end{equation}
Due to the PDE in \eqref{v-k-eqn-12-16}, we note that
\begin{align} \notag
q  & = \frac{\omega \big(\big\{(x,t) \in C_{r,\omega}: \mathcal{L} u(x,t) \geq \omega(x) \big\}\big)}{\omega(C_{r,\omega})}\\ \label{q-v-m-v-compare}
& = \frac{\omega \big(\big\{(x,t) \in C_{r,\omega}: \mathcal{L}^m v_m(x,t) \geq \omega(x) \big\}\big)}{\omega(C_{r,\omega})}, \quad \forall \ m \in \mathbb{N}.
\end{align}
Also, by using  Theorem \ref{ABP} (or the comparison principle), and the assumptions that $u \geq 0$ and $\mathcal{L} u \geq 0$, we infer that
\begin{align}  \label{v-k-boundedness-12-16}
v_m \geq 0  \quad \text{in} \quad Q_{r,\omega}, \quad \forall\, m\in \mathbb{N}.
\end{align}
In addition, we also have $\mathcal{L}^m v_m = \mathcal{L} u\geq 0$ in $Q_{r,\omega}$.  From this, \eqref{q-v-m-v-compare}, and \eqref{v-k-boundedness-12-16}, we can apply the result we just proved for the operator $\mathcal{L}^m$  to obtain 
\[
 v_{m}(\cdot, r^2(\omega^{-n})_{B_r}^{1/n}) \geq N q^{\gamma_0} r^2 \quad \text{on} \quad B_{r/2}, \quad \forall \ m \in \mathbb{N}.
\]
On the other hand, let $w_m =  u - v_m$. We note that $w_m \in \mathcal{W}^{2,1}_{n+1}(Q_{r,\omega}, \omega) \cap C(\overline{Q}_{r,\omega})$ solving
\[
\left\{
\begin{aligned}
\mathcal{L}^m w_m & = \mathcal{L}^m u - \mathcal{L} u  \quad &&\text{in} \quad Q_{r,\omega}, \\
w_m & =  0  \quad &&\text{on} \quad \partial' Q_{r,\omega}.
\end{aligned}
\right.
\]
It then follows from Theorem \ref{ABP} and \eqref{convergence-L-regularize} that
\begin{equation} \label{w-k-con-12-16}
\|w_m\|_{L^\infty(Q_{r,\omega})} \leq N_0r^{\frac{n}{n+1}} \|\mathcal{L}^m u - \mathcal{L} u \|_{L^{n+1}(Q_{r,\omega}, \omega^{-n})} \rightarrow 0 \quad \text{as} \quad m \rightarrow \infty.
\end{equation}
Next, observe that
\begin{align*}
u(x, r^2(\omega^{-n})_{B_r}^{1/n}) & = v_{m}(x, r^2(\omega^{-n})_{B_r}^{1/n}) + w_{m}(x, r^2(\omega^{-n})_{B_r}^{1/n}) \\
& \geq N q^{\gamma_0} r^2 - \|w_m\|_{L^\infty(Q_{r,\omega})}, \quad \forall\ x \in B_{r/2}, \quad \forall \ m \in \mathbb{N}.
\end{align*}
Letting $m\rightarrow \infty$ and using  \eqref{w-k-con-12-16}, we conclude that
\[
u(x, r^2(\omega^{-n})_{B_r}^{1/n})  \geq N q^{\gamma_0} r^2, \quad \text{for} \quad x \in B_{r/2}.
\]
The proof of the theorem is completed.
\end{proof}
\section{Parabolic weighted $W^{2,\varepsilon}$-Lin type estimates and Proof of  Theorem~\ref{Lin thm}} \label{section of lin's proof}
To prove Theorem \ref{Lin thm}, we need several lemmas. Let us recall the set $\mathbb{L}_{\nu, k}^\infty(K_0)$ of operators with truncated and smooth coefficients defined in \eqref{L-smooth}. We begin with the following lemma, known as weak Harnack inequalities.
\begin{lemma}\label{posti lemma}
Let $\nu \in (0,1)$ and $K_0 \in [1, \infty)$, and let $\delta_0 =\delta_0(n, \nu, K_0) \in (0,1)$, $\gamma_0=\gamma_0(n,\nu, K_0)>1$ be the constants defined in Theorem \ref{M-thm}. Suppose that $\mathcal{L} \in \mathbb{L}_{\nu, k}^\infty(K_0)$ for some $k \in \mathbb{N}$, with its $\omega \in A_{1+\frac{1}{n}}$ satisfying $[[\omega]]_{\textup{BMO}(B_{2r},\, \omega)}\leq \delta_0$. Suppose also that $u\in \mathcal{W}^{2,1}_{n+1}(Q_{r,\, \omega},\, \omega) \cap C(\overline{Q}_{r,\omega})$ satisfies $\mathcal{L}u\geq 0$ in $Q_{r,\, \omega}$ and $u\geq 0$ on $\partial'Q_{r,\, \omega}$. Then, for every $p\in (0,\frac{1}{2\gamma_0}]$, we have
\begin{equation}\label{Lp est}
\left(\frac{1}{\omega(C_{r,\, \omega})}\int _{C_{r,\, \omega}}|\mathcal{L}u(x,t)|^p\omega(x)^{1-p}\, dxdt \right)^{\frac{1}{p}}
\leq Nr^{-2}u(0, \bar t),
\end{equation}
where $\bar t= r^2(\omega^{-n})_{B_r}^{1/n}$,  and $N=N(n,\nu, K_0)>0$.
\end{lemma}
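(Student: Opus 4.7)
\textbf{Proof plan for Lemma \ref{posti lemma}.} The plan is to convert Theorem \ref{M-thm} into a distributional (weak-type) bound on the level sets of $\mathcal{L} u/\omega$ over $C_{r,\omega}$, and then integrate via the layer-cake formula. First I would verify that $u \ge 0$ throughout $Q_{r,\omega}$: since $\mathcal{L} u \ge 0$ and $u \ge 0$ on $\partial' Q_{r,\omega}$, the parabolic ABP estimate (Theorem \ref{ABP}) applied to $-u$ gives $u \ge 0$ on $\overline{Q}_{r,\omega}$, hence $|\mathcal{L} u|=\mathcal{L} u$. Set $M := u(0,\bar t)$. For each fixed $\tau>0$, the function $v_\tau := u/\tau$ is non-negative, lies in $\mathcal{W}^{2,1}_{n+1}(Q_{r,\omega},\omega)\cap C(\overline{Q}_{r,\omega})$, and satisfies $\mathcal{L} v_\tau \ge 0$, with
\[
\{(x,t)\in C_{r,\omega}: \mathcal{L} v_\tau \ge \omega(x)\} = \{(x,t)\in C_{r,\omega}: \mathcal{L} u(x,t)\ge \tau\omega(x)\}.
\]
Applying Theorem \ref{M-thm} to $v_\tau$ and using $\inf_{B_{r/2}} v_\tau(\cdot,\bar t)\le v_\tau(0,\bar t)=M/\tau$, I obtain the distributional bound
\[
\omega\bigl(\{\mathcal{L} u\ge \tau\omega\}\cap C_{r,\omega}\bigr) \le \omega(C_{r,\omega})\min\Bigl\{1,\,(t_0/\tau)^{1/\gamma_0}\Bigr\},
\qquad t_0:=\frac{M}{N_* r^2},
\]
where $N_*=N_*(n,\nu,K_0)$ is the constant from Theorem \ref{M-thm}.

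Next, set $p_0 := 1/(2\gamma_0)$ and $f := \mathcal{L} u/\omega$; the integrand in \eqref{Lp est} equals $f^p\,\omega$. By the layer-cake representation,
\[
\int_{C_{r,\omega}} f^{p_0}\,\omega\,dxdt
= p_0\int_0^\infty \tau^{p_0-1}\,\omega\bigl(\{f\ge \tau\}\cap C_{r,\omega}\bigr)\,d\tau.
\]
I split the $\tau$-integral at $t_0$. On $(0,t_0)$ the trivial bound $\omega(C_{r,\omega})$ is used; on $(t_0,\infty)$ the decay bound $\omega(C_{r,\omega})(t_0/\tau)^{1/\gamma_0}$ is used. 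Because $p_0 - 1/\gamma_0 = -1/(2\gamma_0)<0$, the tail integral converges, and a direct computation gives
\[
p_0\int_0^\infty \tau^{p_0-1}\min\{1,(t_0/\tau)^{1/\gamma_0}\}\,d\tau = t_0^{p_0} + 2\gamma_0 p_0\, t_0^{p_0} = 2\, t_0^{p_0}.
\]
Dividing by $\omega(C_{r,\omega})$ and taking the $1/p_0$-th power yields $\bigl(\omega(C_{r,\omega})^{-1}\int_{C_{r,\omega}} f^{p_0}\omega\bigr)^{1/p_0} \le 2^{2\gamma_0}\, t_0$, which is the desired estimate \eqref{Lp est} at the exponent $p=p_0$, with constant $N=2^{2\gamma_0} N_*^{-1}$ depending only on $n,\nu,K_0$.

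Finally, to extend to every $p\in(0,p_0]$ I would invoke Jensen's inequality on the probability measure $d\mu := \omega\,dxdt/\omega(C_{r,\omega})$ supported on $C_{r,\omega}$: for $0<p\le p_0$ the function $s\mapsto s^{p_0/p}$ is convex on $[0,\infty)$, so
\[
\Bigl(\int f^p\,d\mu\Bigr)^{1/p} \le \Bigl(\int f^{p_0}\,d\mu\Bigr)^{1/p_0},
\]
which transfers the estimate at $p_0$ to all smaller $p$ with the same constant. The main obstacle, mild but crucial, is ensuring the layer-cake constant stays bounded as $p$ varies; this is precisely why the upper threshold $p_0 = 1/(2\gamma_0)$ sits strictly below the critical value $1/\gamma_0$ at which the tail integral would diverge. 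The other verification worth noting is that Theorem \ref{M-thm} applies to each $v_\tau$ despite its non-smoothness requirement on $u$, which is fine because our $u$ already lies in $\mathcal{W}^{2,1}_{n+1}(Q_{r,\omega},\omega)\cap C(\overline{Q}_{r,\omega})$ and the $\omega,(a_{ij})$ of the operator $\mathcal{L}\in \mathbb{L}_{\nu,k}^\infty(K_0)$ satisfy the hypotheses of that theorem.
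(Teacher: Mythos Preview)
Your proposal is correct and follows essentially the same approach as the paper: both convert Theorem \ref{M-thm} (applied to $u/\tau$) into a weak-type bound $\omega(\{\mathcal{L}u\ge\tau\omega\}\cap C_{r,\omega})\le \omega(C_{r,\omega})\min\{1,(t_0/\tau)^{1/\gamma_0}\}$ and then integrate via the layer-cake formula, splitting at $t_0=u(0,\bar t)/(N_*r^2)$. The only cosmetic difference is that you evaluate the layer-cake integral at the endpoint $p_0=1/(2\gamma_0)$ and then descend to $p\in(0,p_0]$ by Jensen's inequality on the probability measure $\omega\,dxdt/\omega(C_{r,\omega})$, whereas the paper carries out the split integral directly for all $p\in(0,p_0]$; both yield the same constant depending only on $n,\nu,K_0$.
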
 

\begin{proof} By the maximal principle, we see that $u\geq 0$ in $Q_{r,\, \omega}$. Then, it follows from Theorem \ref{M-thm} that there exists $N=N(n,\nu, K_0)>0$ such that
\[
u(0, \bar t)\geq Nq^{\gamma_0}r^2,
\]
where
\[
q=\frac{\omega(C_{r,\, \omega}\cap \{\mathcal{L}u\geq \omega\})}{\omega(C_{r,\, \omega})}\in [0, 1]. 
\]
Hence,
\[
\omega\left(C_{r,\, \omega}\cap \big\{\mathcal{L}u\geq \omega\big\}\right)\leq (Nr^2)^{-\frac{1}{\gamma_0}}\omega(C_{r,\, \omega})[u(0, \bar t)]^{\frac{1}{\gamma_0}}.
\]
Applying this estimate to $u/\lambda$ for $\lambda>0$, we obtain
\begin{equation}\label{measure est}
\omega(C_{r,\, \omega}\cap \big\{\mathcal{L}u\geq \lambda\omega\big\})
\leq \big(Nr^2\big)^{-\frac{1}{\gamma_0}}\omega(C_{r,\, \omega})[u(0, \bar t)]^{\frac{1}{\gamma_0}}\lambda^{-\frac{1}{\gamma_0}},
\end{equation}
where $N=N(n,\nu, K_0)>0$. Besides, we notice that
\begin{align*}
&\int_{C_{r,\, \omega}}|(\mathcal{L}u)/\omega|^p\, \omega(x) dx dt
=p\int_0^{\infty}\omega(C_{r,\, \omega}\cap \{\mathcal{L}u\geq \lambda\omega\})\lambda^{p-1}\, d\lambda\\
&= p \int_0^s \omega(C_{r,\, \omega}\cap \{\mathcal{L}u\geq \lambda\omega\})\lambda^{p-1}\, d\lambda + p\int_s^{\infty} \omega(C_{r,\, \omega}\cap \{\mathcal{L}u\geq \lambda\})\lambda^{p-1}\, d\lambda \\
&=I_1+I_2,
\end{align*}
where $s>0$ is some number to be determined.

\smallskip
Now, we estimate the two terms $I_1$ and $I_2$. For $I_1$, since 
\[
\omega(C_{r,\, \omega}\cap \{\mathcal{L}u\geq \lambda\omega\})\leq \omega(C_{r,\, \omega}),
\]
we have
\begin{equation}\label{I-1}
I_1\leq p\int_0^s\omega(C_{r,\, \omega})\lambda^{p-1}\, d\lambda
=\omega(C_{r,\, \omega})s^p.
\end{equation}
For $I_2$, let $p\in (0,\frac{1}{2\gamma_0}]$, by \eqref{measure est}, we get
\begin{equation}\label{I-2}
\begin{aligned}
I_2
&\leq p\big(Nr^2\big)^{-\frac{1}{\gamma_0}}\omega(C_{r,\, \omega})[u(0, \bar t)]^{\frac{1}{\gamma_0}} \int_s^{\infty}\lambda^{-\frac{1}{\gamma_0}+p-1}\, d\lambda\\
&\leq \big(Nr^2\big)^{-\frac{1}{\gamma_0}}\omega(C_{r,\, \omega})[u(0, \bar t)]^{\frac{1}{\gamma_0}} s^{-\frac{1}{\gamma_0}+p}.
\end{aligned}
\end{equation}
Next, we set 
\begin{equation}\label{s-choice}
s=\big(Nr^2\big)^{-1}u(0, \bar t).
\end{equation}
Then, for all $p\in (0,\frac{1}{2\gamma_0}]$, it follows from \eqref{I-1} and \eqref{I-2} that
\[
\int_{C_{r,\, \omega}}|(\mathcal{L}u)/\omega|^p\omega(x)\, dxdt\leq \big(Nr^2\big)^{-p} \omega(C_{r,\, \omega})[u(0, \bar t)]^p,
\]
where $N=N(n,\nu, K_0)>0$. Therefore, the lemma is proved.
\end{proof}

The following lemma is an improvement of Lemma \ref{posti lemma}.
\begin{lemma}\label{Lemma 2}
Let $\nu,\, K_0, \, \delta_0,\, \gamma_0$ be as in Lemma \ref{posti lemma}. Let $f,\, g\in L^{n+1}(Q_{r,\, \omega},\, \omega^{-n})$ with $g\geq 0$, and suppose that $\mathcal{L} \in \mathbb{L}_{\nu, k}^\infty(K_0)$ for some $k \in \mathbb{N}$, with its $\omega \in A_{1+\frac{1}{n}}$ satisfying $[[\omega]]_{\textup{BMO}(B_{2r},\, \omega)}\leq \delta_0$. Suppose also that $u\in \mathcal{W}^{2,1}_{n+1}(Q_{r,\, \omega},\, \omega) \cap C(\overline{Q}_{r,\omega})$ satisfies 
\[ \mathcal{L}u=g+f \quad \text{in} \quad  Q_{r,\, \omega}.
\]
Then, for every $p\in (0,\frac{1}{2\gamma_0}]$, we have
\begin{equation}\label{general f}
\begin{aligned}
& \left( \frac{1}{\omega(C_{r,\, \omega})}\int_{C_{r,\, \omega}}|g/\omega|^p\omega\, dxdt \right)^{1/p} \\
&\leq Nr^{-2}\Big\{u^{+}(0, \bar t)+\sup_{\partial'Q_{r,\, \omega}}u^-+r^{\frac{n}{n+1}}\lVert f^-\lVert_{L^{n+1}(Q_{r,\, \omega},\, \omega^{-n})}\Big\},
\end{aligned}
\end{equation}
where $N=N(n,\nu, K_0)>0$, and $\bar t= r^2(\omega^{-n})_{B_r}^{1/n}$.
\end{lemma}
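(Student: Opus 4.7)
The plan is to reduce this to the weak Harnack inequality of Lemma \ref{posti lemma} by constructing from $u$ a non-negative supersolution $v$ whose value at $(0,\bar t)$ is controlled by the right-hand side of \eqref{general f}. Writing $f=f^+-f^-$, the key identity $\mathcal{L}u+f^-=g+f^+\ge 0$ suggests absorbing the $f^-$ contribution into an auxiliary Dirichlet correction, and then adding a constant shift to enforce non-negativity on the parabolic boundary.

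More precisely, since $\mathcal{L}\in\mathbb{L}_{\nu,k}^\infty(K_0)$ has smooth uniformly parabolic coefficients (because $1/k\le \omega\le k$ and $\omega$ is smooth) and $L^{n+1}(Q_{r,\omega},\omega^{-n})$ coincides with $L^{n+1}(Q_{r,\omega})$ up to equivalent norms, classical parabolic theory produces $w\in\mathcal{W}^{2,1}_{n+1}(Q_{r,\omega},\omega)\cap C(\overline{Q}_{r,\omega})$ solving
\begin{equation*}
\mathcal{L}w=f^- \quad \text{in}\ Q_{r,\,\omega},\qquad w=0 \quad \text{on}\ \partial'Q_{r,\,\omega}.
\end{equation*}
Since $f^-\ge 0$, the comparison principle gives $w\ge 0$, and Theorem \ref{ABP} yields
\begin{equation*}
0\le w\le N_0\, r^{\frac{n}{n+1}}\,\|f^-\|_{L^{n+1}(Q_{r,\,\omega},\,\omega^{-n})}.
\end{equation*}
Setting $c=\sup_{\partial'Q_{r,\,\omega}}u^-$ and $v=u+w+c$, a direct computation yields $\mathcal{L}v=g+f^+\ge 0$ in $Q_{r,\,\omega}$, while on $\partial'Q_{r,\,\omega}$ we have $v=u+c\ge u+u^-=u^+\ge 0$. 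In particular, $v$ meets the hypotheses of Lemma \ref{posti lemma}.

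Applying Lemma \ref{posti lemma} to $v$ then gives, for every $p\in(0,1/(2\gamma_0)]$,
\begin{equation*}
\left(\frac{1}{\omega(C_{r,\,\omega})}\int_{C_{r,\,\omega}}(g+f^+)^p\,\omega^{1-p}\,dxdt\right)^{1/p}\le N\,r^{-2}\,v(0,\bar t).
\end{equation*}
Since $g,f^+\ge 0$ and $p>0$, the pointwise bound $g^p\le (g+f^+)^p$ allows replacing $g+f^+$ by $g$ on the left without loss. Combining this with
\begin{equation*}
v(0,\bar t)\le u^+(0,\bar t)+N_0\,r^{\frac{n}{n+1}}\,\|f^-\|_{L^{n+1}(Q_{r,\,\omega},\,\omega^{-n})}+\sup_{\partial'Q_{r,\,\omega}}u^-
\end{equation*}
produces the desired estimate \eqref{general f}. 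The only delicate point is ensuring that the auxiliary $w$ really lies in a function space where Theorem \ref{ABP} and the maximum principle apply; this is precisely where the smoothness and two-sided boundedness built into the class $\mathbb{L}_{\nu,k}^\infty(K_0)$ are invoked. After that, the proof is a clean bookkeeping of signs combined with the weak Harnack inequality of Lemma \ref{posti lemma}.
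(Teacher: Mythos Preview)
Your proof is correct and follows the same overall strategy as the paper: reduce to Lemma \ref{posti lemma} by building a non-negative supersolution, and control the extra terms via the parabolic ABP estimate. The decompositions differ slightly: the paper solves two auxiliary problems, taking $v$ with $\mathcal{L}v=g$, $v=0$ on $\partial'Q_{r,\omega}$ and $w$ with $\mathcal{L}w=-f$, $w=-u$ on $\partial'Q_{r,\omega}$, so that $v=u+w$ and Lemma \ref{posti lemma} applies directly to $v$ with $\mathcal{L}v=g$; the boundary contribution and the $f^-$ term then both emerge from a single ABP bound on $w^+$. You instead split $f=f^+-f^-$, solve only $\mathcal{L}w=f^-$ with zero boundary data, and handle the boundary contribution by the constant shift $c=\sup_{\partial'Q_{r,\omega}}u^-$; this forces $\mathcal{L}v=g+f^+$, which you then reduce to $g$ by monotonicity. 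Both routes are equally short; yours has the minor advantage of keeping the auxiliary Dirichlet problem homogeneous on the boundary, while the paper's avoids the final pointwise comparison $g^p\le (g+f^+)^p$.
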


\begin{proof} Since $\mathcal{L}\in \mathbb{L}_{\nu, k}^\infty(K_0)$, by the classical theory for parabolic equations in non-divergence form with smooth and uniformly elliptic and bounded coefficients, there are $v,\, w\in \mathcal{W}^{2,1}_{n+1}(Q_{r,\, \omega},\, \omega) \cap C(\overline{Q}_{r,\omega})$ solving
\begin{equation*}
\left\{
\begin{aligned}
\mathcal{L} v &=g  &&\text{in}\quad Q_{r,\, \omega},\\
v&=0  &&\text{on}\quad \partial'Q_{r,\, \omega},
\end{aligned}\right.
\quad \text{and}\quad
\left\{
\begin{aligned}
\mathcal{L}w &=-f  &&\text{in}\quad Q_{r,\, \omega},\\
w&=-u &&\text{on}\quad \partial'Q_{r,\, \omega}.
\end{aligned}\right.
\end{equation*}
Hence, we have $v=u+w$ in $Q_{r,\, \omega}$. For the equation of $v$, due to $g\geq 0$, it then follows from Lemma \ref{posti lemma} that
\begin{equation}\label{eqn 1}
\left(\frac{1}{\omega(C_{r,\, \omega})}\int_{C_{r,\, \omega}}|g/\omega|^p\omega\, dxdt\right)^{\frac{1}{p}}\leq N(n, \nu, K_0) r^{-2}v(0, \bar t),
\end{equation}
for all $p\in (0,\frac{1}{2\gamma_0}]$. On the other hand, for $w$, by Theorem \ref{ABP}, we have
\begin{align*}
w^+(0, \bar t)\leq \sup_{Q_{r,\, \omega}}w^+
&\leq \sup_{\partial'Q_{r,\, \omega}}(-u)^+ +N_0(n,\nu)r^{\frac{n}{n+1}}\lVert (-f)^+\lVert_{L^{n+1}(Q_{r,\, \omega},\, \omega^{-n})}\\
&=\sup_{\partial'Q_{r,\, \omega}}u^-+N_0(n,\nu)r^{\frac{n}{n+1}}\lVert f^-\lVert_{L^{n+1}(Q_{r,\, \omega},\, \omega^{-n})}.
\end{align*}
As $\frac{1}{2\gamma_0}<1$, then for all $p\in (0,\frac{1}{2\gamma_0}]$,
\begin{align*}
v(0, \bar t)
&\leq u^+(0, \bar t)+w^+(0, \bar t)\\
&\leq u^+(0, \bar t)+\sup_{\partial'Q_{r,\, \omega}}u^-+N_0(n,\nu)r^{\frac{n}{n+1}}\lVert f^-\lVert_{L^{n+1}(Q_{r,\, \omega},\, \omega^{-n})}.
\end{align*}
This, together with \eqref{eqn 1}, implies \eqref{general f}. Therefore, the lemma is proved.
\end{proof}

Based on Lemma \ref{Lemma 2}, we state and prove the following theorem, which is a weaker form of Theorem \ref{Lin thm}.
\begin{theorem}\label{Lin thm-2}
Let $\nu,\, K_0,\, \delta_0,\, \gamma_0$ be as in Lemma \ref{posti lemma}. There exists a sufficiently small constant $p_0=p_0(n,\nu, K_0)>0$ such that the following assertions hold. Suppose that $u\in \mathcal{W}^{2,1}_{n+1}(Q_{r,\, \omega},\, \omega)\cap C(\overline{Q}_{r,\, \omega})$, and that $\mathcal{L} \in \mathbb{L}_{\nu, k}^\infty(K_0)$ for some $k \in \mathbb{N}$, with its $\omega \in A_{1+\frac{1}{n}}$ satisfying $[[\omega]]_{\textup{BMO}(B_{2r},\, \omega)}\leq \delta_0$. Then, for every $p \in (0,p_0]$, we have
\begin{equation}\label{Lin esti-2}
\lVert D^2u\lVert_{L^{p}(C_{r,\, \omega},\, \omega)}\leq Nr^{-2+\frac{n+2}{p}}\Big(\sup_{\partial' Q_{r,\, \omega}}|u|+r^{\frac{n}{n+1}}\lVert \mathcal{L} u\lVert_{L^{n+1}(Q_{r,\, \omega},\, \omega^{-n})}\Big),
\end{equation}
where $N=N(n, \nu, K_0, p)>0$.
\end{theorem}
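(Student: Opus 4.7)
The plan is to combine the weak-type $L^p$ bound for $\mathcal{L}u$ provided by Lemma \ref{Lemma 2} with a sliding-paraboloid contact-set argument in the spirit of Caffarelli--Cabr\'e, carried out in the anisotropic weighted cylinders $C_{r,\omega}(Y)$.

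By the scale-invariance of the class $\mathbb{L}_{\nu, k}^\infty(K_0)$ under dilations adapted to the cylinders $C_{r,\omega}$, together with the linearity of $\mathcal{L}$, one reduces to the case $r = 1$ and $\sup_{\partial' Q_{1,\omega}}|u| + \|\mathcal{L}u\|_{L^{n+1}(Q_{1,\omega},\omega^{-n})} \leq 1$; the power $r^{-2+(n+2)/p}$ in \eqref{Lin esti-2} is then forced by the homogeneity of the norm together with $\omega(C_{r,\omega})\asymp r^{n+2}$ from \eqref{cylinder measure}. Applying Lemma \ref{Lemma 2} to $u$ with the decomposition $g = (\mathcal{L}u)^+$, $f = -(\mathcal{L}u)^-$, and again to $-u$ with the opposite decomposition, gives
\[
\left(\int_{C_{1,\omega}} |\mathcal{L}u|^p\,\omega^{1-p}\,dxdt\right)^{1/p} \leq N \qquad \text{for every } p \in (0,\tfrac{1}{2\gamma_0}].
\]

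The core step is to upgrade this $L^p$ bound for $\mathcal{L}u$ into a distributional bound for $|D^2 u|$. For each $M \geq 1$, say that $(x_0,t_0) \in C_{1,\omega}$ is \emph{$M$-good} if $u$ is touched, both from above and from below, by parabolic paraboloids of opening $\lesssim M$ (the time direction being dilated by $(\omega^{-n})_{B_{\rho}(x_0)}^{1/n}$ to match the cylinder geometry). At any $M$-good point one has $|D^2 u|\lesssim M$. The complementary bad set is handled by the weighted parabolic ABP estimate of Theorem \ref{ABP}, applied to $u$ minus its best supporting paraboloid on a suitable sub-cylinder, with local estimates assembled via the weighted Krylov--Safonov covering Lemma \ref{covering-1}. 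This produces a decay of the form
\[
\omega\bigl(\{|D^2 u|>N_1 M\}\cap C_{1,\omega}\bigr) \leq N M^{-p_1}, \qquad M \geq 1,
\]
for some $p_1 = p_1(n,\nu,K_0) > 0$; the $L^p(\omega^{1-p})$ bound on $\mathcal{L}u$ from the previous step enters precisely through the right-hand side of ABP. Choosing $p_0 \in \bigl(0,\min\{p_1,\,\tfrac{1}{2\gamma_0}\}\bigr)$, the layer-cake identity
\[
\int_{C_{1,\omega}} |D^2 u|^p\,\omega\,dxdt = p \int_0^\infty M^{p-1}\omega(\{|D^2u|>M\}\cap C_{1,\omega})\,dM,
\]
split at $M = 1$, yields a finite bound: the contribution from $M\leq 1$ is controlled by $\omega(C_{1,\omega})\leq K_0\sigma_n$ from \eqref{cylinder measure} and the tail by the decay above.

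\textbf{Main obstacle.} The delicate step is the contact-set argument. In the classical unweighted uniformly elliptic setting it is a clean consequence of ABP; here one must (i) choose supporting paraboloids respecting the anisotropic scaling $t\sim (\omega^{-n})_{B_r}^{1/n}r^2$ of the cylinders $C_{r,\omega}(Y)$, and (ii) reconcile the $\omega^{-n}$-weighted norm of $\mathcal{L}u$ appearing on the right-hand side of Theorem \ref{ABP} with the $\omega^{1-p}$-weighted norm coming from Lemma \ref{Lemma 2}, using the compatibility $(\omega)_B(\omega^{-n})_B^{1/n}\leq K_0$ from \eqref{A-1-1/n} and the BMO smallness of $\omega$. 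Additionally one must verify that the decay exponent $p_1$ above is uniform in the regularization parameter $k$ hidden in $\mathbb{L}_{\nu, k}^\infty(K_0)$, a point needed later for the removal of the smoothness and boundedness hypotheses in the proof of Theorem \ref{Lin thm}.
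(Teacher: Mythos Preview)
Your proposal misses the key idea and takes a much harder road that, as written, does not close.

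The first step---applying Lemma \ref{Lemma 2} with $g=(\mathcal{L}u)^+$, $f=-(\mathcal{L}u)^-$---yields only an $L^p(\omega^{1-p})$ bound on $\mathcal{L}u$ itself. That carries no information about $|D^2u|$: the time derivative $u_t$ and the spatial part $\omega a_{ij}D_{ij}u$ can be individually large while $\mathcal{L}u$ is small. Moreover, after your normalisation you already know $\|\mathcal{L}u\|_{L^{n+1}(Q_{1,\omega},\omega^{-n})}\le 1$, which is strictly stronger than the $L^p$ bound you derive; so this step is redundant. In the second step you then appeal to a sliding-paraboloid contact-set argument, but you do not carry it out, and the claimed mechanism---``the $L^p(\omega^{1-p})$ bound on $\mathcal{L}u$ enters through the right-hand side of ABP''---is incoherent: Theorem \ref{ABP} requires the $L^{n+1}(\omega^{-n})$ norm, not an $L^p$ norm for small $p$.

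The paper's argument avoids all of this with Lin's trick. One does not apply Lemma \ref{Lemma 2} to the original operator $\mathcal{L}$; instead one \emph{constructs} a second operator $\mathcal{L}_0\in\mathbb{L}_{\nu/2,k}^\infty(K_0)$, depending pointwise on $D^2u$, with the property that
\[
g:=(\mathcal{L}_0-\mathcal{L})u\;\ge\;\tfrac{\nu}{2}\,\omega\,|D^2u|\quad\text{a.e.\ in }Q_{r,\omega}.
\]
This is done by diagonalising $D^2u(x,t)$ and choosing $a_{ij}^0$ to have eigenvalue $\nu/2$ along the non-negative eigendirections of $D^2u$ and $2/\nu$ along the negative ones, so that $(a_{ij}-a_{ij}^0)D_{ij}u\ge \tfrac{\nu}{2}\sum\lambda_i\ge \tfrac{\nu}{2}|D^2u|$. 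Writing $\mathcal{L}_0 u=g+f$ with $f=\mathcal{L}u$, Lemma \ref{Lemma 2} (applied with $\mathcal{L}_0$) now bounds $\int_{C_{r,\omega}}|g/\omega|^p\omega$ directly, and since $|D^2u|\le \tfrac{2}{\nu}g/\omega$ the Hessian bound follows at once---no contact sets, no covering, no level-set decay. Your proposal should be reorganised around this construction; the only analytic inputs needed beyond Lemma \ref{Lemma 2} are Theorem \ref{ABP} (to bound $u^+(0,\bar t)$) and H\"older's inequality.
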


\begin{proof} For the given $u \in \mathcal{W}^{2,1}_{n+1}(Q_{r,\, \omega},\, \omega) \cap C(\overline{Q_{r,\, \omega}})$, we claim that we can choose a smooth matrix $(a^0_{ij}(x,t))$ satisfying \eqref{elliptic} on $Q_{r,\, \omega}$ with ellipticity constant $\frac{\nu}{2}$, and
\begin{equation}\label{difference esti}
\frac{\nu}{2}\omega(x)|D^2u|(x,t)\leq (\mathcal{L}_0-\mathcal{L})u(x,t),\quad \, \text{for a.e.} \ (x,t)\in Q_{r,\, \omega},
\end{equation}
where $\mathcal{L}_0$ is the operator defined by
\begin{equation} \label{L-0.def}
\mathcal{L}_0 v =v_t-\omega(x)a^0_{ij}(x,t)D_{ij}v(x,t).
\end{equation}
We follow the idea in \cite[section~2]{Lin} to prove the claim. For a.e. $(x,t)\in Q_{r,\, \omega}$, there exists an orthogonal matrix $B(x,t)\in \mathbb{R}^{n\times n}$ such that
\begin{equation}\label{eigenvalues}
B(x,t)^*D^2u(x,t)B(x,t)=\textup{diag}\{\lambda_1,\dots, \lambda_{k_0}, -\lambda_{k_0+1},\dots, -\lambda_n\}(x,t),
\end{equation}
where $\lambda_i(x,t)\geq 0$ for each $i\in \{1,2, \dots,n\}$, $k_0 =k_0(x,t)\in \{1,2, \dots,n\}$, and $B^*$ denotes the transpose of $B$. Based on the matrix $B$ and $k_0$, we define a matrix $A_0(x,t)=(a^0_{ij}(x,t))$ by
\[
(a^0_{ij}(x,t))=B(x,t)\textup{diag}\left\{\tfrac{\nu}{2},\dots,\tfrac{\nu}{2}, \tfrac{2}{\nu},\dots, \tfrac{2}{\nu}\right\}B^*(x,t),
\]
where $\text{diag}\{ \ldots \}$ denotes the $n \times n$ diagonal matrix, and $\frac{\nu}{2}$ appears $k_0$ times. It is straightforward to see that the matrix $(a^0_{ij}(x,t))$ satisfies \eqref{elliptic} with ellipticity constant $\frac{\nu}{2}\in (0,1)$. 
By a direct computation and using \eqref{eigenvalues}, we have
\begin{equation}\label{part-1}
\begin{aligned}
a^0_{ij}D_{ij}u=
&=\textup{Tr}(B\,\textup{diag}\left\{\tfrac{\nu}{2},\dots,\tfrac{\nu}{2}, \tfrac{2}{\nu},\dots, \tfrac{2}{\nu}\right\}B^*D^2u)\\
&=\textup{Tr}(\textup{diag}\left\{\tfrac{\nu}{2},\dots,\tfrac{\nu}{2}, \tfrac{2}{\nu},\dots, \tfrac{2}{\nu}\right\}B^* D^2u\, B)\\
&=\frac{\nu}{2}\displaystyle{\sum_{i=1}^{k_0}}\lambda_i
  -\frac{2}{\nu}\displaystyle{\sum_{i=k_0+1}^{n}}\lambda_i.
\end{aligned}
\end{equation}
Similarly, since $\mathcal{L}$ satisfies \eqref{elliptic}, there exist constants $\alpha,\, \beta\in (\nu,\, \nu^{-1})$ such that
\begin{equation}\label{part-2}
a_{ij}D_{ij}u=\alpha\sum_{i=1}^{k_0}\lambda_i
  -\beta\sum_{i=k_0+1}^n\lambda_i. 
\end{equation}
Moreover, observing that
\[
|D^2u|^2=\textup{Tr}(D^2uD^2u)=\textup{Tr}(\textup{diag}\{\lambda_1^2,\lambda_2^2,\dots, \lambda_n^2\})=\sum_{i=1}^{n}\lambda_i^2\leq \Bigl(\sum_{i=1}^n\lambda_i\Bigr)^2,
\]
we infer
\begin{equation}\label{part-3}
|D^2u|\leq \sum_{i=1}^n\lambda_i.
\end{equation}
Now, from the definition of $\mathcal{L}_0$ in \eqref{L-0.def}, and by combining \eqref{part-1}, \eqref{part-2}, and \eqref{part-3}, we conclude that
\begin{align*}
(\mathcal{L}_0-\mathcal{L})u(x,t)
&=\omega(x)\Bigl[(\alpha-\tfrac{\nu}{2})\sum_{i=1}^{k_0}\lambda_i-(\beta-\tfrac{2}{\nu})\sum_{i=k_0+1}^{n}\lambda_i\Bigr]\\
&\geq \omega(x)\frac{\nu}{2}\sum_{i=1}^{n}\lambda_i\geq\frac{\nu}{2}\omega(x)|D^2u|(x,t),\quad \forall\, (x,t)\in Q_{r,\, \omega}.
\end{align*}
Therefore, the claim \eqref{difference esti} is proved.

\smallskip
Next, by Theorem \ref{ABP}, we find that
\begin{equation}\label{u esti}
u^+(0, r^2(\omega^{-n})_{B_r}^{1/n})\leq \sup_{\partial'Q_{r,\, \omega}}u^++N_0(n,\nu)r^{\frac{n}{n+1}}\lVert f^+\lVert_{L^{n+1}(Q_{r,\, \omega},\, \omega^{-n})}.
\end{equation}
Also, we write
\[
g=(\mathcal{L}_0-\mathcal{L})u\geq 0,
\]
where $\mathcal{L}_0$ is defined  in \eqref{L-0.def}.  It follows that
\begin{equation}\label{l_0 eqn}
\mathcal{L}_0u=(\mathcal{L}_0-\mathcal{L})u+\mathcal{L}u=g+f \quad \text{in}\quad Q_{r,\, \omega}.
\end{equation}
Applying Lemma \ref{Lemma 2} to \eqref{l_0 eqn}, and by \eqref{u esti}, we obtain
\begin{equation}\label{diff integral}
\frac{1}{\omega(C_{r,\, \omega})}\int_{C_{r,\, \omega}} |g/\omega|^s\omega\, dxdt\leq N_1^sr^{-2s}\big\{\sup_{\partial'Q_{r,\, \omega}}|u|^s+r^{\frac{ns}{n+1}}\lVert f\lVert_{L^{n+1}(Q_{r,\, \omega},\, \omega^{-n})}^s\big\}
\end{equation}
for all $s\in (0,\frac{1}{2\gamma_0(n,\, \nu/2,\, K_0)}]$, where $N_1=N_1(n, \nu, K_0)>0$. On the other hand, by H\"{o}lder's inequality and \eqref{difference esti}, we infer that for all $q\in [1,\infty)$,
\begin{equation}\label{Hessian-1}
\begin{aligned}
\int_{C_{r,\, \omega}}|D^2u|^{\frac{s}{q}}\omega\, dxdt
&\leq \left(\int_{C_{r,\, \omega}}|D^2u|^s\omega\, dxdt\right)^{\frac{1}{q}}\left(\int_{C_{r,\, \omega}}\omega(x)\, dxdt\right)^{1-\frac{1}{q}}\\
&\leq \left(\frac{2}{\nu}\right)^{\frac{s}{q}}\left(\frac{1}{\omega(C_{r,\, \omega})}\int_{C_{r,\, \omega}}|g/\omega|^s\omega\, dxdt\right)^{\frac{1}{q}}\omega(C_{r,\, \omega}).
\end{aligned}
\end{equation}

Therefore, combining \eqref{diff integral}, \eqref{Hessian-1} with the fact that $\omega(C_{r,\, \omega})\leq N(n) K_0r^{n+2}$ from \eqref{cylinder measure}, we infer that
\begin{align*}
\int_{C_{r,\, \omega}}|D^2u|^{\frac{s}{q}}\omega\, dxdt\leq (2N_1\nu^{-1})^{\frac{s}{q}}N(n)K_0r^{n+2-\frac{2s}{q}}\big\{\sup_{\partial'Q_{r,\, \omega}}|u|^{\frac{s}{q}}+r^{\frac{ns}{(n+1)q}}\lVert f\lVert_{L^{n+1}(Q_{r,\, \omega},\, \omega^{-n})}^{\frac{s}{q}}\big\}.
\end{align*}
Denoting $p=\frac{s}{q}$, due to $s\in (0,\frac{1}{2\gamma_0}]$ and $q\in [1,\infty)$, we have
\[
p\in (0, p_0] \quad \text{with}\quad p_0=1/2\gamma_0.
\]
The theorem then follows from the last two formulas.
\end{proof}

\smallskip
Now, we present the proof of Theorem \ref{Lin thm}.
\begin{proof}[Proof of Theorem \ref{Lin thm}.] Let $\delta_0 = \delta_0(n, \nu, K_0) \in (0,1)$ and $p_0 = p_0(n, \nu, K_0)>0$ be defined as in Theorem \ref{Lin thm-2}, and let 
\[ 
\delta = \min\{\delta_0(n, \nu, K_0),\, \delta_0(n, \nu, K_0+2)/\bar{N}_0, \,  \delta_0(n, \nu, 2^{n+1}K_0)\},
\]
where $\bar{N}_0 = \bar{N_0}(n, K_0)>0$ is the constant defined in  Proposition \ref{stability-cut-omega}. We prove the theorem with these choices of $\delta$ and $p_0$. We split the proof into four steps.

\smallskip \noindent
{\bf Step 1}.  We assume $\mathcal{L} \in \mathbb{L}_{\nu, k}^\infty(K_0)$ for some $k \in \mathbb{N}$. In this case, we have  $\omega, a_{ij} \in C^\infty(\overline{Q}_{1,\omega})$ and $\frac{1}{k} \leq \omega \leq k$. Let us denote $f = \mathcal{L} u \in L^{n+1}(C_{1,\omega}, \omega^{-n})$. We note that $f \in L^{n+1}(C_{1,\omega})$ due to the boundedness of $\omega$. Define 
\[
\tilde f=\left\{
\begin{array}{ll}
f & \text{in } C_{1,\,\omega},\\
0 &\text{in } Q_{1,\,\omega}\setminus C_{1,\,\omega},
\end{array} \right.
\]
Also, let us define
\[
 \tilde u(x,t)=\left\{
\begin{aligned}
&u(x,t)  &&\text{in } \overline C_{1,\,\omega},\\
&u(x,-t) &&\text{in }  \overline{Q}_{1,\,\omega}\setminus \overline C_{1,\,\omega}.
\end{aligned}\right.
\]
We see that  $\tilde f\in L^{n+1}(Q_{1,\,\omega})$, and $\tilde u\in \mathcal{W}^{2,1}_{n+1}(Q_{1,\,\omega},\, \omega) \cap C(\overline{Q}_{1,\,\omega})$. As $\mathcal{L} \in \mathbb{L}_{\nu, k}^\infty(K_0)$,  
there exists a unique solution $v\in \mathcal{W}^{2,1}_{n+1}(Q_{1,\omega}, \omega) \cap C(\overline{Q}_{1,\omega})$ solving the equation
\[\left\{
\begin{aligned}
\mathcal{L}v&=\tilde f \quad &&\text{in} \quad Q_{1,\,\omega},\\[4pt]
v&=\tilde u \quad &&\text{on}\quad \partial' Q_{1,\,\omega}.
\end{aligned}\right.
\]
Note again that we actually have $v$ is in the standard parabolic Sobolev space $W^{2,1}_{n+1}(Q_{1,\omega})$ because of the boundedness of $w$. Due to this, and by the uniqueness of the solution, we have $v=u$ in $C_{1,\,\omega}$. Thus, it follows form Theorem \ref{Lin thm-2} that
\begin{align*}
\lVert D^2u\lVert_{L^{p}(C_{1,\,\omega},\, \omega)}=\lVert D^2v\lVert_{L^{p}(C_{1,\,\omega},\, \omega)}
&\leq N\Big(\sup_{\partial'Q_{1,\,\omega}}|\tilde u|+ \lVert \tilde f\lVert_{L^{n+1}(Q_{1,\, \omega},\,\omega^{-n})}\Big)\\
&=N\Big( \sup_{\partial' C_{1,\,\omega}} |u| +\lVert f\lVert_{L^{n+1}(C_{1,\,\omega},\,\omega^{-n})}\Big),
\end{align*}
where $N=N(n, \nu, K_0, p)>0$. Therefore, \eqref{Lin esti} is proved under the extra assumption that $\mathcal{L} \in \mathbb{L}_{\nu, k}^\infty(K_0)$.

\smallskip \noindent
{\bf Step 2}. We remove the smoothness assumptions on $(a_{ij})$ and $\omega$. We first show how to remove the smoothness assumption on $(a_{ij})$. For the given measurable matrix $(a_{ij})$ satisfying \eqref{elliptic}, by taking the convolution of $a_{ij}$ with suitable mollifiers, we can find a sequence $\{a_{ij}^m\}_m \in C^\infty$  such that
\[
a_{ij}^{m}(x,t) \rightarrow a_{ij}(x,t) \quad \text{for a.e.} \quad (x,t) \in Q_{1,\omega} \quad \text{as} \quad m \rightarrow \infty,
\]
for every $i, j \in \{1, 2,\ldots, n\}$. Moreover, the matrix $(a_{ij}^m)$ satisfies \eqref{elliptic} for all $m \in \mathbb{N}$. Let us denote
\[
\mathcal{L}^m \phi (x, t) = \phi_t - \omega(x) a_{ij}^m(x,t) D_{ij} \phi.
\]
As $u \in \mathcal{W}^{2,1}_{n+1}(C_{1, \omega}, \omega)$ and $(a_{ij}^m)$ satisfies \eqref{elliptic} for all $m \in \mathbb{N}$, by the Lebesgue dominated convergence theorem, we see that
\begin{equation} \label{converge-L-k-final}
\mathcal{L}^{m} u \rightarrow \mathcal{L} u \quad \text{in} \quad L^{n+1}(C_{1,\omega}, \omega^{-n}) \quad \text{as} \quad  m\rightarrow \infty.
\end{equation}
On the other hand, by applying the estimate we just proved for $\mathcal{L}^m$ with smooth coefficients, we obtain
\[
\lVert D^2u\lVert_{L^{p}(C_{1,\,\omega},\, \omega)} \leq N\Big(\sup_{\partial' C_{1,\,\omega}} |u|+\lVert \mathcal{L}^m u\lVert_{L^{n+1}(C_{1,\,\omega},\,\omega^{-n})}\Big), \quad \forall \ m \in \mathbb{N}.
\]
Here, $N = N(n, \nu, K_0, p)>0$. From this last estimate, letting $m\rightarrow \infty$ and using \eqref{converge-L-k-final}, we obtain
\begin{equation} \label{est-12-25-smooth}
\lVert D^2u\lVert_{L^{p}(C_{1,\,\omega},\, \omega)} \leq N\Big(\sup_{\partial' C_{1,\,\omega}} |u|+\lVert \mathcal{L} u\lVert_{L^{n+1}(C_{1,\,\omega},\,\omega^{-n})}\Big).
\end{equation}
This proves \eqref{Lin esti} when $(a_{ij})$ only satisfies \eqref{elliptic}. 

\smallskip
Next, we remove the smoothness assumption on $\omega$. For the given $\omega \in A_{1+\frac{1}{n}}$ with $\frac{1}{k} \leq \omega \leq k$ for some $k \in \mathbb{N}$, let $\omega_{m} = \omega * \phi_{\frac{1}{m}}$ as in Proposition \ref{weighted-BMO-regularization}. We note that $\frac{1}{k} \leq \omega_m \leq k$, and it follows from Proposition \ref{A-p-regularization} and Proposition \ref{weighted-BMO-regularization} that
\begin{align*}
& [\omega_m]_{A_{1+\frac{1}{n}}} \leq 2^{n+1} K_0, \quad \text{and} \\ 
& [[\omega_m]]_{\textup{BMO}(B_2, \omega_m)} \leq [[\omega]]_{\textup{BMO}(B_3, \omega)} \leq \delta \leq \delta_0(n, \nu, 2^{n+1}K_0).
\end{align*}
Then, we can follow the same approach as above when removing the smoothness assumption on $(a_{ij})$. This is possible because $u \in \mathcal{W}^{2,1}_{n+1}(C_{1, \omega}, \omega)$ and $ \frac{1}{k} \leq \omega, \omega_m \leq k$  which allow us to pass the limit of the regularized sequence $\{\omega_m\}_m$  using the Lebesgue dominated convergence theorem and a scaling argument. Hence, we also obtain \eqref{est-12-25-smooth} with the extra assumption that $\frac{1}{k} \leq \omega \leq k$ for $k \in \mathbb{N}$, but $\omega$ is not required to be smooth.

\smallskip \noindent
{\bf Step 3}. We assume that $u \in C^{2,1}(\overline{C}_{1, \omega})$ and we remove the condition $\frac{1}{k} \leq \omega \leq k$. For the given $\omega \in A_{1+\frac{1}{n}}$ satisfying $[\omega]_{A_{1+\frac{1}{n}}} \leq K_0$. With a suitable dilation in the time variable, we can assume without loss of generality that
\[
(\omega^{-n})_{B_1} =1.
\]
In this case $C_{1,\omega} = C_1$.   For $k, l \in \mathbb{N}$, let us define
\[
\beta_{k,l}(x) = \left\{
\begin{array}{ll}
\omega(x) & \quad \text{if} \quad l^{-1} \leq \omega(x) \leq k,\\ \smallskip
l^{-1} & \quad \text{if} \quad \omega(x) < l^{-1}, \\ \smallskip
k & \quad \text{if} \quad \omega(x) > k, \quad x \in \mathbb{R}^n.
\end{array} \right.
\]
Also, let $b_{k,l} = (\beta_{k,l}^{-n})_{B_1}^{1/n}$, $\omega_{k, l}(x) = b_{k,l} \beta_{k,l}(x)$ and
\[
\mathcal{L}_{k,l} \phi (x, t) = \phi_t - \omega_{k,l}(x) a_{ij}(x,t) D_{ij} \phi.
\]
By Proposition \ref{cut-omega} and Proposition \ref{stability-cut-omega}, we note that $\omega_{k,l} \in A_{1+\frac{1}{n}}$ and
\begin{align*}
& [\omega_{k, l}]_{A_{1+\frac{1}{n}}}=[\beta_{k, l}]_{A_{1+\frac{1}{n}}} \leq K_0 +2, \quad \text{and} \\
& [[\omega_{k,l}]]_{\textup{BMO}(B_2,\, \omega_{k,l})}=[[\beta_{k,l}]]_{\textup{BMO}(B_2,\, \beta_{k,l})} \leq \bar{N}_0 [[\omega]]_{\textup{BMO}(B_2,\, \omega)} \leq \bar{N}_0\delta \leq \delta_0(n, \nu, K_0+2).
\end{align*}
We also note that $(\omega_{k,l}^{-n})_{B_1} =1$ and that $\frac{1}{K} \le \omega_{k,l} \le K$ for any $K \in \mathbb{N}$ satisfying $K \geq \max\{ b_{k,l} k,  b_{k,l}^{-1} l\}$. As $u \in C^{2,1}(\overline{C}_{1})$, we see that $u \in \mathcal{W}^{2,1}_{n+1}(C_{1},\, \omega_{k,l}) \cap C(\overline{C}_{1})$. Then, by applying \eqref{est-12-25-smooth} to $u$ and  $\mathcal{L}_{k,l}$, we find $N = N(n, \nu, K_0, p)>0$ such that
\[
\lVert D^2u\lVert_{L^{p}(C_{1},\, \omega_{k,l})} \leq N\Big(\sup_{\partial' C_{1}} |u|+\lVert \mathcal{L}_{k,l} u\lVert_{L^{n+1}(C_{1},\,\omega_{k,l}^{-n})}\Big), 
\]
for all $k, l \in \mathbb{N}$. This particularly implies that
\begin{align}  \notag 
 \lVert D^2u\lVert_{L^{p}(C_{1},\, \beta_{k,l})} & \leq  Nb_{k,l}^{-\frac{1}{p}} \Big(\sup_{\partial' C_{1}} |u|+ b_{k,l}^{-\frac{n}{n+1}} \lVert \mathcal{L}_{k,l} u\lVert_{L^{n+1}(C_{1},\,\beta_{k,l}^{-n})}\Big)  \\ \label{eqn-L-k-12-25}
& \leq  Nb_{k,l}^{-\frac{1}{p}} \Big(\sup_{\partial' C_{1}} |u|+ b_{k,l}^{-\frac{n}{n+1}} \lVert \overline{\mathcal{L}}_{k,l} u\lVert_{L^{n+1}(C_{1},\,\beta_{k,l}^{-n})}\Big)  \\ \notag
& \quad + N b_{k,l}^{-\frac{1}{p}- \frac{n}{n+1}} |b_{k,l} -1| \|a_{ij} D_{ij} u\|_{L^{n+1}(C_1,\, \beta_{k,l})}
\end{align}
for all $k, l \in \mathbb{N}$, where
\[
\overline{\mathcal{L}}_{k,l} \phi (x, t) = \phi_t - \beta_{k,l}(x) a_{ij}(x,t) D_{ij} \phi.
\]
Recall that $\omega_{k, l}(x) = b_{k,l} \beta_{k,l}(x)$. Using \eqref{cylinder measure}, we infer that
\begin{align*}
\|a_{ij} D_{ij} u\|_{L^{n+1}(C_1,\, \beta_{k,l})}
&\leq N(n, \nu)\|D_{ij}u\|_{L^{\infty}(C_1)}b_{k,l}^{-\frac{1}{n+1}}[\omega_{k,l}(C_1)]^{\frac{1}{n+1}}\\
&\leq N(n, \nu, K_0)\|D_{ij}u\|_{L^{\infty}(C_1)}b_{k,l}^{-\frac{1}{n+1}}.
\end{align*}
Note that $b_{k,l} \rightarrow 1$ as $k, l \rightarrow \infty$. Thus,
\begin{align*}
&b_{k,l}^{-\frac{1}{p} - \frac{n}{n+1}}  |b_{k,l} -1| \|a_{ij} D_{ij} u\|_{L^{n+1}(C_1,\, \beta_{k,l})}\\
&\leq N(n, \nu, K_0) \|D_{ij}u\|_{L^\infty(C_1)}  b_{k,l}^{-\frac{1}{p}-1} |b_{k,l}-1| \rightarrow 0.
\end{align*}
Due to this and as $u \in \mathcal{W}^{2,1}_{n+1}(C_{1}, \omega) \cap C(\overline{C}_{1})$ and by the Lebesgue monotone convergence theorem, we can pass the limit as $k\rightarrow \infty$ in \eqref{eqn-L-k-12-25}, and then similarly send $l \rightarrow \infty$, to obtain \eqref{Lin esti} for $u \in C^{2,1}(\overline{C}_{1,\omega})$. 

\smallskip
\noindent
{\bf Step 4}. We remove the smoothness assumption on $u$ that we used in {\bf Step 3} and prove \eqref{Lin esti} for $u \in \mathcal{W}^{2,1}_{n+1, \textup{loc}}(C_{1,\omega}, \omega) \cap C(\overline{C}_{1, \omega})$.   Without loss of generality, we assume that $C_{1,\omega} = C_1$. Note that it is sufficient to prove \eqref{Lin esti} under the assumption that
\begin{equation} \label{Lu-finite-norm}
\|\mathcal{L} u\|_{L^{n+1}(C_1, \omega^{-n})} < \infty
\end{equation}
as \eqref{Lin esti} is trivial otherwise.  In addition, we observe that to prove \eqref{Lin esti}, we can first prove an estimate  similar to \eqref{Lin esti} in which $C_{1,\omega}$ is replaced by $B_{r} \times (-r^2(\omega^{-n})_{B_r}^{1/n}, r-1)$ with $r \in (1/2, 1)$ and sufficiently close to $1$, and then use \eqref{Lu-finite-norm} and the assumption that $u \in C(\overline{C}_{1})$ to pass the limit $r\rightarrow 1^-$ to obtain \eqref{Lin esti}. Hence, without loss of generality, we can assume that $u$ is defined in a domain that is slightly larger than $C_1$, and $ u \in \mathcal{W}^{2,1}_{n+1}(C_{1}, \omega) \cap C(\overline{C}_{1})$.  

\smallskip
Now, let $\varphi \in C_c^\infty(B_1)$ be a radially non-increasing function satisfying
\[
0 \leq \varphi \leq 1 \quad \text{and} \quad \int_{\mathbb{R}^n} \varphi(x) dx =1.
\]
Similarly, let $\bar{\varphi} \in C_{c}(-1, 1)$ be a standard mollifier satisfying
\[
0 \leq \bar{\varphi} \leq 1 \quad \text{and} \quad \int_{\mathbb{R}} \bar{\varphi}(s) ds =1.
\]
Then, we take $\phi_\epsilon(x,t) = \epsilon^{-(n+2)}\varphi (\epsilon^{-1} x) \bar{\varphi}(\epsilon^{-2}t)$ for $(x,t) \in \mathbb{R}^n \times \mathbb{R}$ and $\epsilon>0$.

\smallskip
Next, let us define 
\[
u_m(x,t) = u * \phi_{\frac{1}{m}}(x, t), \quad (x,t) \in \overline{C}_1, \quad m \in \mathbb{N}.
\]
We can see that $u_m \in C^{2,1}(\overline{C}_1)$ and
\begin{equation} \label{m-m-2026-6}
u_m(x,t) \rightarrow u (x,t) \quad \text{uniformly in} \quad  \overline{C}_1.
\end{equation}
Moreover, as $m \rightarrow \infty$
\[
\partial_t u_m(x,t) \rightarrow \partial_t u(x,t) \quad \text{and} \quad D_{ij} u_m(x,t) \rightarrow D_{ij} u(x,t) \quad \text{for a.e} \ (x,t) \in C_1.
\]
As $\varphi$ is radially non-increasing, we note that
\[
|\partial_t u_m(x,t)| \leq \mathcal{M} u_t(\cdot, t)(x) \quad \text{and} \quad |D_{ij} u_m(x,t)| \leq \mathcal{M} D_{ij}u(\cdot, t)(x)
\]
for a.e.~$(x,t) \in C_1$ and for $i,j \in \{1, 2,\ldots, n\}$, where $\mathcal{M}$ is the uncentered Hardy-Littlewood maximal function. Because $\omega^{-n} \in A_{n+1}$ by \eqref{omega-n-wei-12-22}, it follows from \cite{Muckenhoupt} that $\mathcal{M}: L^{n+1}(\mathbb{R}^n, \omega^{-n}) \rightarrow L^{n+1}(\mathbb{R}^n, \omega^{-n})$ is bounded; see also \cite[Definition 2.1.3, p.~87; Theorem 7.1.9, p.~507]{Grafakos}, for example. Therefore, by the Lebesgue dominated convergence theorem, we see that
\[
\partial_t u_m \rightarrow u_t  \quad \text{and} \quad \omega(x) a_{ij}(x,t) D_{ij} u_m \rightarrow \omega(x) a_{ij}(x,t)D_{ij}u \quad \text{in} \quad L^{n+1}(C_1, \omega^{-n}),
\]
as $m \rightarrow \infty$. Due to this, we see that $u_m \in \mathcal{W}^{2,1}_{n+1}(C_1, \omega) \cap C^{2,1}(\overline{C}_1)$ and
\begin{equation} \label{last-est-convo-26}
\lim_{m \rightarrow \infty} \|\mathcal{L} u_m\|_{L^{n+1}(C_1, \,\omega^{-n})} =  \|\mathcal{L} u\|_{L^{n+1}(C_1,\, \omega^{-n})}.
\end{equation}
Now, applying the estimate proved in {\bf Step 3}, we obtain
\begin{align*}
\lVert D^2u_m\lVert_{L^{p}(C_{1},\, \omega)}  & \leq N\Big(\sup_{\partial' C_{1}} |u_m|+ \lVert \mathcal{L} u_m \lVert_{L^{n+1}(C_{1},\,\omega^{-n})} \Big), \quad \forall \ m \in \mathbb{N},
\end{align*}
where $N = N(n, \nu, K_0, p)>0$. From \eqref{m-m-2026-6}, \eqref{last-est-convo-26}, and by taking $m\rightarrow \infty$ in the last estimate, we obtain \eqref{Lin esti}. The proof is then completed.
\end{proof}

\medskip
\appendix
\section{Proof of Remark \ref{remark-example}} \label{proof-weigh-example} 
\begin{proof} As explained in the statement, part (a) is proved in \cite{CMP, Cho-Fang-Phan}, and part (c) follows part (b) by a localization argument and a covering lemma. Hence, it remains to prove \eqref{BMO-varphi-example}.  We claim that there exists a constant $N = N(n)>0$ such that
\begin{equation} \label{claim-BMO-we-expli}
\frac{1}{\varphi(B_r(x_0))} \int_{B_r(x_0)} |\varphi(x) - (\varphi)_{B_r(x_0)}|\, dx \leq \frac{N}{|\ln(4r_0)|},
\end{equation}
for all $B_r(x_0) \subset B_{r_0}$. We verify \eqref{claim-BMO-we-expli} by considering the following cases.

\smallskip \noindent
{\bf Case 1: $x_0 =0$.} Recall that $\varphi(x)=-\ln |x|$ for $x\in B_{1/e}$. By polar coordinates, we have
\begin{align*}
\varphi(B_r)= - n \sigma_n \int_0^r s^{n-1}\ln(s)\, ds
=\sigma_n \big( r^n|\ln r|+r^n/n \big)\ge  \sigma_n r^n |\ln r|,
\end{align*}
where $\sigma_n=|B_1|$.
Also, by a direct computation, we infer that
\begin{align*}
& \int_{B_r} |\varphi(x) - (\varphi)_{B_r}|\, dx  \leq  \int_{B_r} \fint_{B_r}\big |\ln|x| - \ln|y| \big|\, dydx\\
& = \frac{n^2\sigma_n}{r^n} \int_0^r \int_0^r s^{n-1} \tau^{n-1} |\ln (\tau/s)|\, d\tau  ds=\frac{2n^2\sigma_n}{r^n} \int_0^r \int_0^s s^{n-1} \tau^{n-1} |\ln (\tau/s)|\, d\tau  ds\\
& =\frac{2n^2\sigma_n}{r^n} \int_0^r s^{2n-1} \Big[\int_0^{1}  t^{n-1} |\ln t|\, dt \Big] ds =  \frac{2\sigma_n}{r^n}\int_0^r s^{2n-1}\, ds= \frac{\sigma_n r^n}{n},
\end{align*}
where we used the identity $\int_0^1 t^{n-1}|\ln t|\, dt=\frac{1}{n^2}$.
Combining the last two estimates yields
\[
\frac{1}{\varphi(B_r)} \int_{B_r} |\varphi(x) - (\varphi)_{B_r}|\, dx \leq \frac{1}{n|\ln r|}.
\]
Hence, \eqref{claim-BMO-we-expli} holds since $|\ln r| \geq |\ln (4r_0)|$ with $r \in (0, r_0)$.

\smallskip \noindent
{\bf Case 2: $|x_0| \leq 3r$.} In this case, we note that
\[
B_{r}(x_0) \subset B_{4r} \subset B_{4r_0}.
\]
Also, by the triangle inequality, we note that
\[
\fint_{B_{r}(x_0)} |\varphi(x) - (\varphi)_{B_r(x_0)}|\, dx \leq 2 \fint_{B_r(x_0)}| \varphi(x) - (\varphi)_{B_{4r}}|\, dx.
\]
Then, we have
\begin{align*}
& \frac{1}{\varphi(B_r(x_0))} \int_{B_{r}(x_0)} |\varphi(x) - (\varphi)_{B_{r}(x_0)}|\, dx \leq \frac{2}{\varphi(B_{r}(x_0))} \int_{B_{r}(x_0)} |\varphi(x) - (\varphi)_{B_{4r}}|\, dx \\
& \leq \frac{N(n)}{\varphi(B_{4r})} \int_{B_{4r}} |\varphi(x) - (\varphi)_{B_{4r}}|\, dx  \leq \frac{N(n)}{|\ln (4r_0)|},
\end{align*}
where in the last step we used {\bf Case 1} and the doubling property that
\[
\varphi(B_r(x_0)) \geq \gamma \varphi(B_{4r}) \quad \text{with} \quad \gamma  = \gamma(n)>0.
\]

\smallskip \noindent
{\bf Case 3: $|x_0| > 3r$.} In this case, we have
\[
|x|\ge |x_0| - |x -x_0|\ge 2r, \quad \forall \ x \in B_r(x_0).
\]
From this, and note also that $B_r(x_0)\subset B_{r_0}$, then
\begin{equation} \label{ball-type-2}
 2r \leq  |x| \leq r_0 , \quad \forall \ x \in B_r(x_0).
\end{equation}
Therefore,
\[
-\ln(r_0) \leq -\ln|x| \leq - \ln (2r), \quad \forall \    x \in B_r(x_0)
\]
Then,
\[
\varphi(B_r(x_0))=\int_{B_r(x_0)} \varphi(x)\, dx\geq 
\sigma_n r^n |\ln (r_0)|.
\]
From this, we see that
\begin{align*}
& \frac{1}{\varphi(B_r(x_0))} \int_{B_r(x_0)} |\varphi(x) - (\varphi)_{B_r(x_0)}|\, dx \\
& \leq \frac{1}{\sigma_n r^n |\ln(r_0)|} \int_{B_{r}(x_0)} \Big |-\ln|x|  + \fint_{B_r(x_0)} \ln |y|\,  dy \Big|\, dx \\
& \leq \frac{1}{|\ln(4r_0)|} \fint_{B_{r}(x_0)}  \fint_{B_{r}(x_0)} \big|  \ln |x| - \ln|y| \big|\, dy dx.
\end{align*}
By \eqref{ball-type-2} and the mean value theorem, we note that
\[
\big|\ln|x| - \ln|y|\big|  \leq 1, \quad \forall \ x ,\, y \in B_r(x_0).
\] 
Then
\[
 \frac{1}{\varphi(B_r(x_0))} \int_{B_r(x_0)} |\varphi - (\varphi)_{B_r(x_0)}| dx \leq \frac{1}{|\ln(4r_0)|}.
 \]
The proof is then completed.
\end{proof}

\section{Proof of Lemma \ref{covering-1}}\label{Appendix-A}
\begin{proof} We split the proof into three steps.

\smallskip
\noindent
\textbf{Step 1}. We begin by proving the first assertion in \eqref{conclusion covering}, which claims that
\[
\omega(\Gamma\setminus \tilde E) =0.
\] 
Due to $\omega\in A_{1+\frac{1}{n}}$, we have $\omega(x)>0$ for almost every $x\in \R^n$. Thus, for each Lebesgue point $X=(x,t)$ of $\Gamma$, we have the sequence of cylinders $\{C_{2^{-i},\, \omega}(X_i)\}_{i\in \mathbb{N}}$ with $X_i=(x,\, t+\frac{1}{2}4^{-i}(\omega^{-n})_{B_{2^{-i}}(x)}^{1/n})\in \mathbb{R}^n\times \mathbb{R}$ such that
\[
\lim_{i\rightarrow \infty}\frac{\omega\big(C_{2^{-i},\, \omega}(X_i)\cap \Gamma\big)}{\omega\big(C_{2^{-i},\, \omega}(X_i)\big)}=1,
\]
which implies that there exists $i_0\in \N$ large such that 
\[
X\in C_{2^{-i_0},\, \omega}(X_{i_0})\in \A, \quad \text{then}\quad X\in \tilde C_{2^{-i_0},\, \omega}(X_{i_0})\subset \tilde E.
\]
Therefore, $\Gamma\subset \tilde E$ except for a set of zero measure, then the first assertion in \eqref{conclusion covering} is proved.

\smallskip
\noindent
\textbf{Step 2}. We prove the second assertion in \eqref{conclusion covering}. Note that this assertion is trivial when $\omega(\Gamma)=0$. Hence, from now on, we assume that $\omega(\Gamma)>0$.  We define
\begin{equation}\label{redef of A}
\A_0= \left\{C = C_{r,\, \omega}(Y):\ \omega(C \cap \Gamma) = q \omega(C) \right\}.
\end{equation}
It is clear that $\mathcal{A}_0 \subset \A$. Moreover, for a fixed $C_{r,\, \omega}(Y)\in \A$ such that
\[
\omega(C_{r,\, \omega}(Y) \cap \Gamma) \geq q \omega(C_{r,\, \omega}(Y)),
\]
let us define $C_\theta = C_{\theta r,\, \omega}(Y)$, and
\[
g(\theta) = \frac{1}{\omega(C_{\theta})} \int_{C_{\theta}  \cap \Gamma} \omega(x)\, dx dt.
\] 
It follows that $g: [1, \infty) \rightarrow (0, \infty)$ is continuous. Besides,   
\[
 g(1) \geq q \quad \text{and}  \quad \lim_{\theta \rightarrow \infty}g(\theta) =0,
\]
where the latter is because $\Gamma$ is bounded and therefore it has a finite $\omega$-measure. 
Hence, we can find $\theta_0 \geq 1$ so that 
$g(\theta_0) = q$ by the continuity of $g$. 
This implies that $C_{\theta_0} \in \mathcal{A}_0$. Also, as $C_{r,\, \omega}(Y) \subset C_{\theta_0}$, we conclude that  
\[
\tilde E = \displaystyle{\bigcup_{C \in \mathcal{A}}} \tilde C = \displaystyle{\bigcup_{C \in \mathcal{A}_0}} \tilde C.
\]

Now, we construct a sequence of pairwise disjoint cylinders $\{C^i\}_{i=0}^{\infty}\subset \mathcal{A}_0$ as in the proof of the classical Vitali covering lemma. 
First, note that $\Gamma$ is bounded, we see that 
the set $\{r>0: C_{r,\, \omega}(Y)\in \A_0\}$ is also bounded. 
Let us define 
\[
R_0=\sup\{r>0: C_{r,\, \omega}(Y)\in \A_0\}<\infty.
\]
Then, by a compactness argument, we can find a cylinder $C^0$ such that 
\[ 
C^0 = C_{R_0,\,\omega}(Y^0)  \in   \A_0. 
\]
Next, for each $i \in \N$, using a similar argument, we can define $\A_i,\, R_i,\, C^i$ inductively by
\[
\A_i=\Big\{C=C_{r,\, \omega}(Y)\in \A_0: C\cap C^k=\emptyset, \ \forall \ k =0,1,\ldots ,i-1\Big\},
\]
\[
R_i =\sup\{r>0 : \ C_{r,\, \omega}(Y)\in \A_i\},
\] 
and 
\[
C^i =C_{R_i,\,\omega}(Y^i)  \in   \A_i\quad\text{with}\quad Y^{i}=(y^i, s^i).
\]
If the set $\A_i$ is not empty, we continue this process. If the set $\A_i$ is empty, then stop the process and set $R_{k}=0$ for all $k\geq i$. From the construction, we see that
\[
\tilde \A= \A_0\supset\A_1\supset\A_2\supset \ldots, \quad \text{and}\quad R_0\geq R_1\geq R_2\geq R_3\geq\ldots.
\]
Moreover, we claim that 
\begin{equation}  \label{R-k-limit}
\lim_{i \rightarrow \infty} R_i =0.
\end{equation}
To see this, suppose that \eqref{R-k-limit} does not hold. Then, as $\{R_i\}_{i}$ decreases, there is $r_0>0$ such that
\[ 
\lim_{i\rightarrow \infty} R_i =r_0, \quad \text{and} \quad R_i \geq r_0, \quad \forall \, i \in \mathbb{N}.
\]
From \eqref{cylinder measure}, we notice that
\[
\omega(C^i)=\omega(C_{R_i,\,\omega}(Y^i))\geq N(n)R_i^{n+2}\geq N(n)r_0^{n+2}>0, \quad \forall\, i\in \mathbb{N}.
\]
Thus, by this and the disjoint property of $\{C^i\}_{i\in \mathbb{N}}$,
\[
\begin{split}
\omega(\Gamma) &\geq \omega\big(\Gamma\cap(\cup^{\infty}_{i=0}C^i)\big)=\sum^{\infty}_{i=0}\omega\big(\Gamma\cap C^i\big) = q\sum^{\infty}_{i=0}\omega(C^i) \\
& \geq N(n)q \sum^{\infty}_{i=0}r_0^{n+2} =\infty.
\end{split}
\]
However, this contradicts $\omega(\Gamma)<\infty$ as $\Gamma$ is bounded. The claim \eqref{R-k-limit} is proved.

\smallskip
For each $C_{r,\, \omega}(Y) \in \A$, by \eqref{R-k-limit} and the construction of  $\{R_i\}_{i\in \N}$, we can find a unique $i_0 \in \mathbb{N}$ such that
\begin{equation} \label{C-k-zero}
R_0\geq R_1\geq
\ldots \geq R_{i_0} \geq r> R_{i_0+1}\geq \ldots.
\end{equation}
From this and the construction of $\{C^i\}_{i\in \mathbb{N}}$, we infer that there exists some $k_0\in\{0,1,2,\ldots, i_0\}$ such that
\begin{equation} \label{i-zero-C}
C_{r,\, \omega}(Y)\cap C^{k_0} \not=\emptyset.
\end{equation}
Next, for each $i\in \N$, let
\[
{Q}^{i}=B_{3R_{i}}(y^{i})\times \big(s^i-6R_i^2(\omega^{-n})^{1/n}_{B_{3R_{i}}(y^{i})},\, s^i+3R_i^2(\omega^{-n})^{1/n}_{B_{3R_{i}}(y^{i})}\big).
\]
As such, by a direct computation,
\begin{equation}\label{measure comparison}
\begin{aligned}
\omega(Q^i)=\sigma_n(3R_i)^{n+2}(\omega)_{B_{3R_i}(y^i)}(\omega^{-n})_{B_{3R_i}(y^i)}^{\frac{1}{n}}
&\leq \sigma_n(3R_i)^{n+2}K_0\\
&\leq 3^{n+2}K_0\omega(C^{i}),
\end{aligned}
\end{equation}
where $\sigma_n=|B_1|$, and we used \eqref{cylinder measure} for $C^i$ in the last inequality. Moreover, because of \eqref{C-k-zero} and \eqref{i-zero-C}, we see that  $C_{r,\, \omega}(Y) \subset Q^{k_0}$. From this, the first assertion in \eqref{conclusion covering}, and by ignoring the set of zero $\omega$-measure, we have
\begin{equation} \label{Gamma-comp-Ci}
\Gamma\subset \bigcup_{C\in \A}C\subset \bigcup^{\infty}_{i=0}Q^i,\quad \text{so that}\quad \sum^{\infty}_{i=0}\omega(Q^i) \geq \omega(\Gamma).
\end{equation}

\smallskip
On the other hand, since $\omega\in A_{1+\frac{1}{n}}$ with $[\omega]_{A_{1+\frac{1}{n}}}\leq K_0$, by the reverse H\"{o}lder property of $\omega$ (see, \cite[Proposition 7.2.8,~P. 521]{Grafakos}, for example), there exist positive constants $N=N(n,K_0)$ and $\lambda_0=\lambda_0(n,K_0)$ such that
\[
\omega\big(B_r(x)\setminus B_{\eta r}(x)\big)\leq N(1-\eta^n)^{\lambda_0}\omega\big(B_r(x)\big), \quad \forall \ B_r(x)\subset \R^n,
\]
where $\eta\in (0,1)$. Thus,
\begin{equation*}\label{small diff}
\omega\big(C^i\setminus \tilde C^i\big)\leq N(1-\eta^n)^{\lambda_0}\omega\big(C^i\big), \quad \forall \ i\in \N.
\end{equation*}
This, together with \eqref{redef of A} and the disjointness of $\{C^i\}_{i\in \N}$, implies that
\begin{equation}\label{E-Gamma}
\begin{aligned}
\omega(\tilde E\setminus \Gamma)
&\geq\sum^{\infty}_{i=0}\left[\omega(C^i\setminus \Gamma)-\omega(C^i\setminus \tilde C^i)\right]\\
&= \left[1-q-N(1-\eta^n)^{\lambda_0}\right]\sum^{\infty}_{i=0}\omega(C^i).
\end{aligned}
\end{equation}

\smallskip
Therefore, due to the first assertion of \eqref{conclusion covering}, it follows that
\begin{align*}
\omega(\tilde E) &=\omega(\Gamma)+ \omega(\tilde E\setminus \Gamma)\\ 
&\geq \omega(\Gamma)+\left[1-q-N(1-\eta^n)^{\lambda_0}\right]\sum^{\infty}_{i=0}\omega(C^i) &&\text{by \eqref{E-Gamma}}\\
&\geq \omega(\Gamma)+3^{-n-2}K_0^{-1}\left[1-q-N(1-\eta^n)^{\lambda_0}\right]\sum^{\infty}_{i=0}\omega(Q^i) && \text{by \eqref{measure comparison}}\\
&=\left\{1+3^{-n-2}K_0^{-1}\left[1-q-N(1-\eta^n)^{\lambda_0}\right]\right\}\omega(\Gamma) && \text{by \eqref{Gamma-comp-Ci}},
\end{align*}
where $N=N(n, K_0)>0$, and $\lambda_0=\lambda_0(n, K_0)>0$. Then, the second assertion in \eqref{conclusion covering} is proved.

\smallskip
\noindent
\textbf{Step 3}. Lastly, we prove the third assertion in \eqref{conclusion covering}. 
For each $x \in \mathbb{R}^n$, we write
\[
\tilde E(x)=\big\{ t\in \mathbb{R}: (x,t) \in \tilde E \big\} \quad \text{and}\quad \hat{E}(x)=\big\{t \in \mathbb{R}: (x,t) \in \hat{E} \big\}.
\]
By Fubini's theorem, we obtain
\[
\omega(\tilde E) = \int_{\mathbb{R}^n} |\tilde E(x)| \omega(x)\, dx \quad \text{and} \quad \omega(\hat{E}) = \int_{\mathbb{R}^n} |\hat{E}(x)| \omega(x)\, dx,
\]
where $|\tilde E(x)|$ and $|\hat{E}(x)|$ denote the Lebesgue measure on $\mathbb{R}$ of $\tilde E(x)$ and $\hat{E}(x)$, respectively.
As $\omega(x) \geq 0$, it suffices to show that
\begin{equation}\label{time measure}
 |\hat{E}(x)| \geq \xi_1 |\tilde E(x)| \quad \text{for all}\ x \in \mathbb{R}^n, \text{ where } \xi_1 = (l-1)(l+1)^{-1}.
\end{equation}
For any fixed $x$ with $\tilde E(x)$ nonempty, we have $\hat E(x)$ is nonempty and open. Thus, we may assume $\hat E(x)$ is a finite or countable union of disjoint open intervals, i.e.,
\[
\hat E(x)=\cup_{i\in \mathcal{I}}I_i(x),
\]
where $\mathcal {I}$ is an index set. Then, for any $t\in \tilde E(x)$, we have $(x,t)\in C_{\tilde r,\, \omega}(\tilde Y)\in \A$ with some $\tilde r>0$ and $\tilde Y=(\tilde y, \tilde s)\in \R^{n}\times \R$. Due to \eqref{set E}, we see that 
\[
\big(\tilde s+{\tilde r}^2(\omega^{-n})^{1/n}_{B_{\tilde r}(\tilde y)},\, \tilde s+l{\tilde r}^2(\omega^{-n})^{1/n}_{B_{\tilde r}(\tilde y)}\big)\subset I_{i_0}(x)\quad \text{for some}\quad i_0\in \mathcal {I}.
\]
Set $r_{i_0}^2(x)=|I_{i_0}(x)|/(l-1)$, we can determine an $a_{i_0}(x)\in \R$ so that
\[
I_{i_0}(x)=(a_{i_0}+r_{i_0}^2, a_{i_0}+lr_{i_0}^2).
\]
Also, note that $r_{i_0}^2\geq {\tilde r}^2(\omega^{-n})^{1/n}_{B_{\tilde r}(\tilde y)}$, it follows from the last two formulas that
\begin{equation}\label{pull back}
a_{i_0}-r_{i_0}^2\leq \tilde s-{\tilde r}^2(\omega^{-n})^{1/n}_{B_{\tilde r}(\tilde y)}\leq \tilde s\leq a_{i_0}+lr_{i_0}^2.
\end{equation}
For convenience, we set $J_{i}(x)=(a_{i}-r_{i}^2,a_{i}+lr_{i}^2)$ for all $i\in \mathcal {I}$.  By \eqref{pull back}, we have
$\tilde E(x)\subset \bigcup_{i\in \mathcal {I}}J_i(x)$, and 
\[
|J_i(x)|=(l+1)r_i^2=\frac{l+1}{l-1}|I_i(x)|,\quad \forall\,  i\in \mathcal {I}.
\]
 Thus, for all $x\in \R^n$ with $E(x)$ nonempty, we have 
\begin{equation*}
|\hat E(x)|=\sum_{i\in \mathcal {I}}|I_i(x)|= \sum_{i\in \mathcal {I}}\frac{l-1}{l+1}|J_i(x)|\geq \frac{l-1}{l+1}|\tilde E(x)|.
\end{equation*}
Therefore, we proved \eqref{time measure}. 
\end{proof}

\medskip

\end{document}